\documentclass[english]{msjproc}
\usepackage{lmodern}

\usepackage{etex}
\usepackage{amsmath,amsfonts,amssymb,amsthm,amscd,mathrsfs}
\usepackage{pst-all}
\psset{unit=1mm}
\usepackage[all]{xy}
\usepackage{tikz}
\usetikzlibrary{shapes.geometric,shadows}
\usepackage{enumitem,pifont}
\usepackage{colortbl,multirow}
\setlist{nolistsep}
\AtBeginDocument{
      \setlength\abovedisplayskip{3pt}
      \setlength\belowdisplayskip{4pt}}
\makeatletter

\newcommand{\FF}{\mathcal F_1}
\newcommand{\MRR}{\mathcal R}
\newcommand{\QQ}{\mathbb{Q}}
\newcommand{\ZZ}{\mathbb{Z}}
\newcommand{\NN}{\mathbb{N}}
\newcommand{\OO}{\mathcal O}
\newcommand{\Orb}{\operatorname{Orb}}
\newcommand\Rack{{\scriptscriptstyle\mathrm{R}}}
\newcommand\op{\mathrel{\triangleright}}

\newcommand{\End}{\operatorname{End}}

\newcommand{\Id}{\operatorname{Id}}

\renewcommand{\ge}{\geqslant}
\renewcommand{\le}{\leqslant}
\newcommand\oa{\mathbf{a}}
\newcommand\ooa{\widehat{\oa}}
\newcommand\ob{\mathbf{b}}
\newcommand\oc{\mathbf{c}}
\newcommand\od{\mathbf{d}}
\newcommand\oi{\mathbf{i}}
\newcommand\ooi{\widehat{\oi}}
\newcommand\oj{\mathbf{j}}

\newcommand\ophi{\widehat{\phi}}
\newcommand\wP{\widetilde{P}}
\newcommand{\bi}[1]{\textbf{\textit{#1}}}

\newcommand\mapsfrom{\mathrel{\reflectbox{\ensuremath{\mapsto}}}}

\definecolor{MyGrey}{rgb}{.804,.804,.756} 
\usepackage[colorlinks=false,linkbordercolor=MyGrey,citebordercolor=MyGrey,
urlbordercolor=MyGrey, pdfauthor={V. Lebed}, pdftitle={Cohomology of finite monogenic self-distributive structures}]{hyperref}
\usepackage{caption}
\usepackage[all]{hypcap}

  \theoremstyle{plain}
\newtheorem{theorem}{Theorem}
\newtheorem*{theorem*}{Theorem A}

\newtheorem{proposition}{Proposition}[section]
\newtheorem{corollary}[proposition]{Corollary}
\newtheorem{lemma}[proposition]{Lemma}
  \theoremstyle{remark}
\newtheorem{remark}[proposition]{Remark}
  \theoremstyle{definition}
\newtheorem{definition}[proposition]{Definition}

\newtheorem{example}[proposition]{Example}

\begin{document}
  \title{Cohomology of finite monogenic\\ self-distributive structures}
  \author{Victoria LEBED}{University of Nantes}
  \email{lebed.victoria@gmail.com}
%  \webpage{\nolinkurl{http://www.math.sciences.univ-nantes.fr/~lebed/}}
%  \thanks{This work was supported by a JSPS Postdoctral Fellowship For Foreign Researchers and by JSPS KAKENHI Grant 25$\cdot$03315.}
  \subjclass[2010]{20N02, 55N35, 57M27}
  \keywords{self-distributivity, Laver tables, cyclic racks, rack cohomology}

  \maketitle

  \begin{abstract}
A shelf is a set with a binary operation~$\op$ satisfying $a \op (b \op c) = (a \op b) \op (a \op c)$. Racks are shelves with invertible translations $b \mapsto a \op b$; many of their aspects, including cohomological, are better understood than those of general shelves. Finite monogenic shelves (FMS), of which Laver tables and cyclic racks are the most famous examples, form a remarkably rich family of structures and play an important role in set theory. We compute the cohomology of FMS with arbitrary coefficients. On the way we develop general tools for studying the cohomology of shelves. Moreover, inside any finite shelf we identify a sub-rack which inherits its major characteristics, including the cohomology. For FMS, these sub-racks are all cyclic.
  \end{abstract}

  \section{Introduction}

A \bi{self-distributive structure}, or \bi{shelf}\footnote{The former term is used in set theory, while the latter, coined by Alissa Crans, belongs to the topologists' vocabulary.}, is a set~$S$ endowed with a binary operation~$\op$ satisfying the \emph{(left) self-distributivity} relation\footnote{Following the set-theoretical and algebraic traditions, we use the left version of self-distributivity rather than the right one, more common in knot theory. The two are obviously equivalent.}
\begin{equation}\label{E:SD}
a \op (b \op c) = (a \op b) \op (a \op c).
\end{equation}
A shelf is called a \bi{rack} if for any $a \in S$, the map $x \mapsto a \op x$ is a bijection from~$S$ to itself; an idempotent rack (in the sense that $a \op a = a$ for all~$a$) is called a \bi{quandle}. Groups with the conjugation operation $a \op b = aba^{-1}$ are major examples of quandles. Self-distributive structures sporadically emerged in mathematics starting from the late 19th century. However, their systematic investigation had to wait until the 1980s, when spectacular applications to knot classification, large cardinal study, universal algebra questions, and, later, Hopf algebra classification independently brought them into the spotlight of different mathematical communities. 

This article focuses on \bi{finite monogenic shelves} (\bi{FMS}), i.e., finite shelves generated by a single element with respect to~$\op$. Their properties are very different from those of conjugation quandles, and they form an extremely rich class, often described as ``combinatorially chaotic''. However, Ale\v{s} Dr\'{a}pal \cite{DraAlg,DraGro} (see also \cite{Dehornoy2,Smedberg}) found a way to construct them all out of the following two basic families: 

\begin{enumerate}
\item The \bi{Laver table}~$A_n$ (here $n \ge 0$) is the unique shelf $(\{1,2,3,\ldots,2^n\},\,\op)$ satisfying the initialization relation
\begin{equation}\label{E:Init}
a \op 1 \equiv a+1 \mod 2^n.
\end{equation}
Richard Laver \cite{Laver} discovered these structures as a by-product of the study of ite\-rations of elementary embeddings in set theory, showing in particular that properties~\eqref{E:SD}-\eqref{E:Init} uniquely determine~$\op$. An extensive study of the combinatorics of Laver tables followed, unveiling their intricate properties, some of which are currently established only under a strong set-theoretical axiom. 
The~$A_n$ are monogenic ($1$ is the unique generator), and are not racks unless $n = 0$.  

\item The \bi{cyclic shelf}~$C_{r,m}$ (here $r \ge 0$, $m \ge 1$) is the shelf $(\{-r,-(r-1),\ldots,$ $m-2,m-1\},\,\op)$ with\footnote{Here we follow the non-standard but convenient notations of Matthew Smedberg \cite{Smedberg}.} 
\begin{equation}\label{E:CShelf}
a \op b = \begin{cases} b+1 &\text{ if } b \neq m-1,\\ 0  &\text{ if } b = m-1. \end{cases}
\end{equation}
The shelf~$C_{r,m}$ is generated by its element~$-r$; it is a rack if and only if $r=0$, in which case it is called a \emph{cyclic rack}\footnote{This term can also refer to the infinite rack $(\ZZ, a \op b = b+1)$, which is not considered here.}.
\end{enumerate}

Finite shelves are presented by their multiplication tables, containing the value of~$p \op q$ in the cell~$(p,q)$ (see Table~\ref{F:First} for examples).

\begin{center}
\begin{tabular}{c|cccccccc}
$A_3$&$1$&$2$&$3$&$4$&$5$&$6$&$7$&$8$\\
\hline
$1$&$2$&$4$&$6$&$8$&$2$&$4$&$6$&$8$\\
$2$&$3$&$4$&$7$&$8$&$3$&$4$&$7$&$8$\\
$3$&$4$&$8$&$4$&$8$&$4$&$8$&$4$&$8$\\
$4$&$5$&$6$&$7$&$8$&$5$&$6$&$7$&$8$\\
$5$&$6$&$8$&$6$&$8$&$6$&$8$&$6$&$8$\\
$6$&$7$&$8$&$7$&$8$&$7$&$8$&$7$&$8$\\
$7$&$8$&$8$&$8$&$8$&$8$&$8$&$8$&$8$\\
$8$&$1$&$2$&$3$&$4$&$5$&$6$&$7$&$8$\\
\end{tabular}
\qquad 
\begin{tabular}{c|cccccccc}
$C_{3,5}$&$-3$&$-2$&$-1$&$0$&$1$&$2$&$3$&$4$\\
\hline
$-3$&$-2$&$-1$&$0$&$1$&$2$&$3$&$4$&$0$\\
$-2$&$-2$&$-1$&$0$&$1$&$2$&$3$&$4$&$0$\\
$-1$&$-2$&$-1$&$0$&$1$&$2$&$3$&$4$&$0$\\
$0$&$-2$&$-1$&$0$&$1$&$2$&$3$&$4$&$0$\\
$1$&$-2$&$-1$&$0$&$1$&$2$&$3$&$4$&$0$\\
$2$&$-2$&$-1$&$0$&$1$&$2$&$3$&$4$&$0$\\
$3$&$-2$&$-1$&$0$&$1$&$2$&$3$&$4$&$0$\\
$4$&$-2$&$-1$&$0$&$1$&$2$&$3$&$4$&$0$\\
\end{tabular}
\captionof{table}{Multiplication tables for the Laver table~$A_3$ and the cyclic shelf $C_{3,5}$}\label{F:First}
\end{center}

Important advances in knot-theoretic and Hopf-algebraic applications were achieved using the \bi{cohomological approach to self-distributivity}, initiated in \cite{RackHom,QuandleHom} and further developed in~\cite{AndrGr}. Here is how it works. For a shelf $(S,\op)$ and an abelian group~$A$, denote by $C^k(S,A)$ the abelian group of maps from~$S^{\times k}$ to~$A$ (where the group operation is the point-wise summation), and put
\begin{align}
(d^k \phi)(a_1, \ldots, a_{k+1}) = \sum_{i=1}^{k} (-1)^{i-1} (& \phi(a_1,\ldots,a_{i-1},a_i \op a_{i+1}, \ldots, a_i \op a_{k+1})\qquad\label{E:RackCohom}\\
& - \phi(a_1,\ldots,a_{i-1},a_{i+1}, \ldots,a_{k+1})).\notag
\end{align}
The definition is completed by $C^0(S,A) = A$ and $d^0 = 0$. One checks that $(C^k(S,A),d^k)$ is a cochain complex. Its cocycle / coboundary / cohomology groups are denoted by $Z^k(S,A)$, $B^k(S,A)$, and $H^k(S,A)$ respectively.\footnote{The described cohomology theory is called \emph{rack cohomology}, which is reflected in the classical notations $Z^k_\Rack$, $B^k_\Rack$, $H^k_\Rack$. Here we consider only this theory, and hence simplify notations and names.} Coloring techniques from~\cite{QuandleHom} produce an invariant of positive braids out of a shelf $(S, \op)$ equipped with a $2$- or $3$-cocycle~$\phi$. That invariant extends to arbitrary braids if~$S$ is a rack, and to knots and links if~$S$ is a quandle and~$\phi$ satisfies one additional condition. More generally, $(k+1)$-cocycles are used in the study of $k$- or $(k-1)$-dimensional braids and knots. 

Topological and algebraic applications explain why the \emph{cohomology of racks} receives so much attention. One of the major results belongs to Pavel Etingof and Mat\'{i}as Gra{\~n}a \cite{EG_Betti}. They showed that for a rack $(S, \op)$, $\dim_\QQ (H^k(S,\QQ)) = |\Orb(S)|^k$, where $\Orb(S)$ is the set of \emph{orbits}, i.e., classes for the equivalence relation on~$S$ induced by $a \sim b \op a$. The \emph{indicator functions of $k$-tuples of orbits} can be chosen as generators; in particular, the class of any non-zero constant map is a free generator in the monogenic case. This is bad news for knot theorists, since cohomologous cocycles yield the same invariant, and the invariants obtained from orbit indicator functions contain nothing more than linking numbers. However, in general the group $H^k(S,\ZZ)$ may involve torsion even for the most basic quandles, producing interesting invariants -- see for example \cite{Carter_Betti,M_Betti,LN_Betti,PrzHomDihedral}.

\bi{The cohomological aspects of non-rack shelves} have, on the contrary, remained in the shadow until recently, probably because current methods extract only positive braid invariants out of them. However, the example of free shelves (which are conjecturally approximated by Laver tables) confirms that general shelf colorings  may be adapted to arbitrary braids, yielding extremely strong invariants \cite{DehOrder2,Dehornoy2}. This led Patrick Dehornoy to launch a challenging project of developing braid-theoretic applications of Laver tables \cite{DehLaverSurvey}. As a first step, Dehornoy and the author \cite{DehLeb} explicitly described $Z^k(A_n,\ZZ)$, $B^k(A_n,\ZZ)$, and $H^k(A_n,\ZZ)$ for $k \le 3$, revealing in particular rich combinatorics behind the $2$-cocycles of the~$A_n$. 

In Section~\ref{S:Proj} of the present paper, we extend the cohomology calculations of~\cite{EG_Betti} from racks to a wider class of shelves, comprising Laver tables, cyclic shelves, and more sophisticated classes of FMS mixing these two. In particular, we obtain 
$$\dim_\QQ (H^k(A_n,\QQ))=\dim_\QQ (H^k(C_{r,m},\QQ))=1$$ 
for all~$k$.  We push the techniques from~\cite{EG_Betti} further to get a better structural understanding of the complex $(C^k(S,A),d^k)$, presenting it as
\begin{align*}
(C^k(S,A),d^k) &\;\cong \; (C^k(\Orb(S),A),0) \;\bigoplus \; (\text{an acyclic complex})
\end{align*}
for certain abelian groups~$A$ (in particular, $A=\QQ$). In the case of Laver tables, this method is especially powerful: it works for any abelian group $A$.

In Section~\ref{S:Retract}, we introduce the notion of \bi{retracts} of a shelf $(S,\op)$. These are certain sub-shelves of~$S$ sharing its major characteristics (cohomology, the  number of orbits, etc.). We show that a finite shelf admits retracts, and all its minimal retracts are in fact pairwise isomorphic racks. The  rack retracts of all FMS turn out to be cyclic. To understand the cohomology of  FMS, it thus suffices to compute all the groups $H^k(C_{r,m},A)$, which is done very explicitly in Section~\ref{S:Cyclic}.  Section~\ref{S:Laver} contains similar explicit calculations for Laver tables; together with the $H^k(A_n,A)$, already determined in Section~\ref{S:Proj}, they yield the cocycle and coboundary groups of the~$A_n$. All these computations are summarized as follows:
\begin{theorem*}
Suppose that $(S,\op)$ is a finite monogenic shelf, and~$A$ is an abelian group. Take a $k \ge 0$. Then one has
\begin{enumerate}
\item $H^k(S,A) \cong A$.
\end{enumerate}
If~$S$ is either a Laver table or a cyclic shelf, one has moreover
\begin{enumerate}[resume]
\item $B^k(S,A) \cong A^{P_k(|S|)}$, where~$P_k$ is the polynomial 
\begin{equation}\label{E:Pk}
\displaystyle P_k(x)=\frac{x^k-x^{\,k \hspace{-0.5ex} \mod \hspace{-0.2ex} 2}}{x+1};
\end{equation}
\item $Z^k(S,A) \cong H^k(S,A) \oplus B^k(S,A)$.
\end{enumerate}
\end{theorem*}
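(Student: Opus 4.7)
\emph{Proof plan.} The theorem assembles results from Sections~\ref{S:Proj}--\ref{S:Laver}. I would begin by reducing part~(1). By the retract theorem of Section~\ref{S:Retract}, every finite shelf $S$ admits a minimal retract which is a rack and shares the cohomology of~$S$; for an FMS this rack is cyclic, hence
\begin{equation*}
H^k(S,A)\;\cong\;H^k(C_{0,m(S)},A)
\end{equation*}
for some $m(S)\ge 1$. Part~(1) therefore follows as soon as the cyclic-shelf computation of Section~\ref{S:Cyclic} is in place, specialized to $r=0$.

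For parts~(2)--(3), only the two explicit families $S=A_n$ and $S=C_{r,m}$ need to be treated. In the cyclic-shelf case the decisive simplification is that $a\op b$ depends only on~$b$: writing $\sigma(x):=a\op x$, the differential~\eqref{E:RackCohom} becomes
\begin{align*}
(d^k\phi)(a_1,\dots,a_{k+1})=\sum_{i=1}^{k}(-1)^{i-1}\bigl(&\phi(a_1,\dots,a_{i-1},\sigma(a_{i+1}),\dots,\sigma(a_{k+1}))\\
& -\phi(a_1,\dots,a_{i-1},a_{i+1},\dots,a_{k+1})\bigr).
\end{align*}
Stratifying $C^k(C_{r,m},A)$ by the positions of arguments equal to the unique $\sigma$-fixed point~$m-1$ turns the cocycle and coboundary equations into a linear system with constant coefficients. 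A bookkeeping argument should give $B^k\cong A^{P_k(|S|)}$, with the polynomial $P_k$ emerging from the resulting block structure, and simultaneously produce a canonical splitting of~$Z^k$ as the sum of $B^k$ and the subgroup of constant cocycles (which represent $H^k$).

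For the Laver tables I would invoke the projection machinery of Section~\ref{S:Proj}, which extends Etingof--Gra\~{n}a beyond racks and, as announced in the introduction, yields for~$A_n$ an isomorphism
\begin{equation*}
(C^k(A_n,A),d^k)\;\cong\;(C^k(\Orb(A_n),A),0)\;\oplus\;(\text{acyclic})
\end{equation*}
valid for every abelian group~$A$. Since~$A_n$ has a single orbit, this immediately gives $H^k(A_n,A)\cong A$ and splits the cocycle group as $H^k\oplus B^k$. The explicit counting of generators of~$B^k(A_n,A)$ matching~$P_k(2^n)$ would then be read off from the acyclic summand in Section~\ref{S:Laver}.

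The main obstacle is item~(3): over an arbitrary coefficient group~$A$, the short exact sequence $0\to B^k\to Z^k\to H^k\to 0$ need not split automatically, so a canonical section has to be exhibited. The natural candidate, in both families, is the subgroup of cocycles pulled back from $\Orb(S)$ (essentially the constant $k$-cochains), which represents the free generator of $H^k(S,A)\cong A$; one must check that it meets~$B^k$ trivially and spans a complement. This should drop out of the explicit presentations produced in Sections~\ref{S:Cyclic} and~\ref{S:Laver}, after which the combinatorial identity $P_k(x)(x+1)=x^k-x^{k\bmod 2}$ is a sanity check matching dimensions.
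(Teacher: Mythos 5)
Your plan for part~(1)---reduce a finite monogenic shelf to its rack retract, identify that retract as a cyclic rack $C_{0,m}$, and apply the explicit cyclic computation---is the same route the paper follows (it additionally passes through iterated \emph{reductions} to an $E_{n,\rho,\mu}$ before extracting the cyclic rack retract, but that is a technical intermediary). Your plan for the Laver side of parts~(2)--(3) also matches: $A_n$ admits a strong projector $P=\tau_{2^n-1}$ over \emph{any} ring, hence the complex splits as in~\eqref{E:Decompose4}, giving the vanishing of $H^{>0}$ up to $A$ and the splitting $Z^k\cong H^k\oplus B^k$ for arbitrary~$A$; the precise count $B^k\cong A^{P_k(2^n)}$ still requires the explicit basis constructed in Section~\ref{S:Laver}.

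The cyclic-shelf half of your plan, however, has a genuine gap, and it concerns precisely the part that is new in this paper. First, the combinatorial input is wrong: $m-1$ is \emph{not} a fixed point of $\sigma\colon b\mapsto b+1$ in~$C_{r,m}$---the map~$\sigma$ has no fixed point for $m\ge 2$, so stratifying by positions where $b_i=m-1$ as a ``$\sigma$-fixed point'' is not the right bookkeeping. The paper's Section~\ref{S:Cyclic} instead picks out $k$-tuples containing a \emph{consecutive pair} $(a+1,a)$, optionally preceded by a block of alternating $(0,m-1)$'s; this is what makes the evaluation matrix triangular and gives the count $P_k(r+m)$.

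Second, and more importantly, your candidate section for part~(3)---the constant cochains, ``which represent the free generator of $H^k(S,A)\cong A$''---fails for cyclic shelves. The paper proves precisely that in $H^k(C_{0,m},\ZZ)$ one has $[\phi_{const}]=m^s[\phi^0]$ where $2s\in\{k,k-1\}$ and $\phi^0(\ob)=\delta_{m-1}(b_2)\cdots\delta_{m-1}(b_{2s})$; for $m,k\ge 2$ the constant map is \emph{not} a generator, and over coefficient groups of positive characteristic divisible by~$m$ its class can even vanish. This is exactly why no strong projector exists for $C_{r,m}$ over~$\ZZ$ and why the splitting of item~(3) cannot be obtained here from the projector machinery of Section~\ref{S:Proj}. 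The correct complement of $B^k$ in $Z^k$ is spanned by the non-constant cocycle~$\phi^0$, and producing it requires the explicit Step~1/2/3 analysis of Section~\ref{S:Cyclic}. Your proposal names the right obstacle (item~(3) over arbitrary~$A$) but proposes a section that provably does not work in the cyclic case.
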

%The same results hold in the dual homological setting. %check
%2-3 for other FMS?
These isomorphisms are made explicit and are proved in Sections \ref{S:Cyclic}-\ref{S:Laver}.

Note that for cyclic shelves, our theorem is stronger than what one might expect to get using Etingof-Gra{\~n}a's approach, since, as we show, the classes of constant maps no longer generate $H^k(C_{r,m},\ZZ)$ in general. 

\textbf{Acknowledgements.} The author is grateful to Patrick Dehornoy for an introduction to the fascinating world of Laver tables, and to Seiichi Kamada  for encouraging comments on this work. The hospitality of OCAMI (Osaka City University) and Henri Lebesgue Center (University of Nantes), where the paper was written, deserve a special mention. The author also greatly appreciates the support of a JSPS Postdoctoral Fellowship For Foreign Researchers, JSPS KAKENHI Grant 25$\cdot$03315, and program ANR-11-LABX-0020-01.
%Wagemann, Referee

\section{Cohomology of shelves with a strong projector}\label{S:Proj}  

The aim of this section is to adapt the rack cohomology computations from~\cite{EG_Betti} to certain non-rack shelves, and to sharpen them on the way.

Take a shelf $(S, \op)$ and a module~$A$ over a commutative ring~$R$ (for instance, an abelian group~$A$ viewed as a $\ZZ$-module). 

The set $C^k(S,A)$ of maps from~$S^{\times k}$ to~$A$ is an $R$-module, with the operations defined point-wise. Thus $R$-linear maps from $C^k(S,A)$ to itself (which we write on the right of their arguments) form an $R$-algebra 
$$M^k = \End_R(C^k(S,A)).$$ 
For any $a \in S$, consider the \emph{left translation} map~$\tau_a$ and its induced action on $C^k(S,A)$:
\begin{align*}
\tau_a &\in \End(S),  & T^k_a &\in M^k,\\
b &\mapsto a \op b;   & \phi &\mapsto (\phi \circ \tau_a^{\times k} \colon (b_1,\ldots,b_k) \mapsto \phi(a \op b_1,\ldots, a \op b_k)).
\end{align*}
Here $\End(S)$ is the set of \textit{shelf morphisms} (i.e., maps preserving the shelf operation) from~$S$ to itself. Remark that for non-rack shelves, the maps~$\tau_a$ and~$T^k_a$ are not ne\-cessarily invertible.
Further, let~$T=T_S$ be the sub-semigroup of $\End(S)$ generated by all the~$\tau_a$, and let~$R T$ be its (non-unital) semigroup algebra. Consider the \textit{diagonal} and \textit{augmentation} $R$-algebra maps
\begin{align*}
\pi^k \colon R T &\to M^k,  & \varepsilon \colon R T &\to R,\\
\tau_a &\mapsto T^k_a;  & \tau_a &\mapsto 1.
\end{align*}
We write $t \op b$ for a $t \in T$ applied to a $b \in S$. When bi-linearized, this operation gives a linear action of~$RT$ on~$RS$, for which we keep the notation~$\op$.
We also write $\phi \cdot p$ for $\pi^k(p)$, $p \in RT$, applied to a $\phi \in C^k(S,A)$.
 
The \emph{$S$-invariant part} of $C^k(S,A)$ is defined by
\begin{align*}
C^k_{inv}(S,A) & = \big\{\, \phi\colon S^{\times k} \rightarrow A \,\big|\, \phi \cdot \tau_a = \phi \text{ for all } a \in S \,\big\}.
\end{align*}
For all $\phi \in C^k_{inv}(S,A)$ and $t \in R T$, one has $\phi \cdot t = \varepsilon(t) \phi$.

In what follows, bold letters $\oa$, $\ob$, etc. will stand for $k$-tuples from $S^{\times k}$, with~$k$ determined by the context. Given a $b \in S$ and a $\phi \in C^k(S,A)$, define $\phi_b \in C^{k-1}(S,A)$ as the \emph{partial evaluation} $\phi_b\colon \ob \mapsto \phi(b,\ob)$. This yields a map $ev_b \colon C^k(S,A) \to C^{k-1}(S,A), \, \phi \mapsto \phi_b$. The following easy observation describes the interactions between partial evaluations, differentials, and left translations:

\begin{lemma}\label{L:CohomPropertiesPre}
For any map $\phi \in C^k(S,A)$ and any $b \in S$, one has
\begin{enumerate}
\item $d^k (\phi \cdot \tau_a) = (d^k \phi) \cdot \tau_a$;
\item $ \phi \cdot \tau_b - \phi = (d^k \phi)_b + d^{k-1} (\phi_b)$;
\item $ (\phi \cdot \tau_a)_b = \phi_{a \op b} \cdot \tau_a$.
\end{enumerate}
\end{lemma}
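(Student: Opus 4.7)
The plan is to verify each identity by direct evaluation on an arbitrary $k$-tuple, using only the definitions of $d^k$, the action $\phi \cdot \tau_a$, and the partial evaluation $\phi_b$. The ingredient needed for (1) is the self-distributivity relation~\eqref{E:SD}; the other two parts require only bookkeeping.

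For part (1), I would expand $(d^k(\phi \cdot \tau_a))(a_1,\ldots,a_{k+1})$ from the definition and observe that each term has the form $\phi(a \op a_1, \ldots, a \op (a_i \op a_j), \ldots)$ or $\phi(a \op a_1, \ldots, \widehat{a \op a_i}, \ldots)$. Applying~\eqref{E:SD} in the form $a \op (a_i \op a_j) = (a \op a_i) \op (a \op a_j)$ turns the first type of term into the corresponding term in $(d^k \phi)(a \op a_1, \ldots, a \op a_{k+1}) = ((d^k \phi) \cdot \tau_a)(a_1, \ldots, a_{k+1})$, while the second type is already in the desired shape.

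For part (2), I would evaluate both sides at $(a_2,\ldots,a_{k+1})$. The left-hand side is $\phi(b \op a_2, \ldots, b \op a_{k+1}) - \phi(a_2, \ldots, a_{k+1})$. On the right, I expand $(d^k \phi)_b$ using the definition of $d^k$ with $a_1 = b$. The summand $i=1$ produces exactly $\phi(b\op a_2, \ldots, b \op a_{k+1}) - \phi(a_2,\ldots,a_{k+1})$, i.e., the left-hand side. The summands $i \ge 2$ involve $\phi(b, \ldots)$ and thus factor as $\phi_b(\ldots)$; reindexing $j = i - 1$ to make $j$ run from $1$ to $k-1$ picks up an extra sign $(-1)^{-1}$, showing that this portion is precisely $-d^{k-1}(\phi_b)(a_2,\ldots,a_{k+1})$. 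The remaining term on the right-hand side then cancels it, yielding the identity. The one thing to be careful about is the sign flip when shifting the index, which is where a sign mistake would most easily slip in.

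For part (3), I evaluate both sides at an arbitrary $(k-1)$-tuple $\ob = (b_1,\ldots,b_{k-1})$. The left-hand side is $(\phi \cdot \tau_a)(b, b_1, \ldots, b_{k-1}) = \phi(a \op b, a \op b_1, \ldots, a \op b_{k-1})$, and the right-hand side is $\phi_{a\op b}(a \op b_1, \ldots, a \op b_{k-1}) = \phi(a \op b, a \op b_1, \ldots, a \op b_{k-1})$; the two coincide by inspection.

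The main obstacle is simply the index/sign tracking in part (2); parts (1) and (3) are essentially definitional once self-distributivity is invoked. No step is deep, but (2) is the identity that will be used repeatedly in the subsequent cohomology computations, so I would write it out with the reindexing $j = i - 1$ explicit.
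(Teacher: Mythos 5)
Your proposal is correct. The paper omits the proof entirely, labelling the lemma merely an ``easy observation,'' and your direct verification on an arbitrary tuple is exactly the computation the paper is implicitly appealing to: self-distributivity for part~(1), the $i=1$ versus $i\ge 2$ split with the $j=i-1$ reindexing (hence the sign flip) for part~(2), and a purely definitional unwinding for part~(3).
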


From this, one deduces
\begin{lemma}\label{L:CohomProperties}
\begin{enumerate}
\item The algebra~$R T_S$ acts on~$C^\bullet(S,A)$ via~$\pi^\bullet$ by morphisms of complexes.
\item The induced $R T_S$-action on the cohomology~$H^\bullet(S,A)$ is trivial.
\item The $S$-invariant part $C^\bullet_{inv}(S,A)$ forms a sub-complex of $C^{\bullet}(S,A)$.
\item For any $b \in S$, one has $ev_b(Z^\bullet_{inv}(S,A)) \subseteq Z^{\bullet}(S,A)$.
\end{enumerate}
\end{lemma}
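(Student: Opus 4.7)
The plan is to derive each of the four claims directly from the three identities of Lemma~\ref{L:CohomPropertiesPre}, propagating from single generators $\tau_a$ to the whole semigroup $T_S$ by induction on word length, and then to $RT_S$ by $R$-linearity.

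For~(1), identity~(i) of the preceding lemma states exactly that each $T^k_a$ commutes with $d^k$. Because composition of translations gives composition in~$M^k$ and~$M^{k+1}$, and each $T^k_a$ is $R$-linear, iteration yields $d^{k}(\phi\cdot t)=(d^k\phi)\cdot t$ for every word $t=\tau_{a_1}\cdots\tau_{a_n}\in T_S$; $R$-linearity then extends the equality to all $p\in RT_S$. This is precisely the assertion that $\pi^\bullet$ lands in the algebra of chain endomorphisms of $C^\bullet(S,A)$.

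Parts (3) and (4) are almost immediate from the identities already in hand. For~(3), invariance means $\phi\cdot\tau_a=\phi$ for every $a$; identity~(i) then gives $(d^k\phi)\cdot\tau_a=d^k(\phi\cdot\tau_a)=d^k\phi$, so $d^k\phi$ is again invariant, which shows that $C^\bullet_{inv}(S,A)$ is a sub-complex. For~(4), I specialize identity~(ii) to an invariant cocycle $\phi\in Z^k_{inv}(S,A)$: the left-hand side $\phi\cdot\tau_b-\phi$ vanishes by invariance, the term $(d^k\phi)_b$ vanishes because $d^k\phi=0$, leaving $d^{k-1}(\phi_b)=0$, i.e.\ $\phi_b\in Z^{k-1}(S,A)$.

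The only claim with a little bookkeeping is~(2). Identity~(ii) tells me that for any cocycle~$\phi$ the element $\phi\cdot\tau_b-\phi=d^{k-1}(\phi_b)$ is a coboundary, so $\phi\cdot\tau_b$ and $\phi$ are cohomologous and $[\phi\cdot\tau_b]=[\phi]=\varepsilon(\tau_b)[\phi]$. Iterating over a word in the $\tau_a$'s, and then extending linearly, I obtain $[\phi\cdot p]=\varepsilon(p)[\phi]$ for every $p\in RT_S$, which is the precise sense in which the induced action is ``trivial''. The one slightly delicate point, which I expect to be the main place to be careful, is exactly this phrasing through the augmentation $\varepsilon$, since $RT_S$ is non-unital; once part~(1) provides the chain-level action, the cohomological statement is a formal consequence. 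I do not anticipate any serious obstacle. Note that identity~(iii) of Lemma~\ref{L:CohomPropertiesPre} is not consumed in the present proof, and presumably feeds into the retract machinery of Section~\ref{S:Retract}.
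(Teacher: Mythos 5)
Your proposal is correct and follows exactly the route the paper intends: the paper offers no separate argument, simply stating that the lemma is deduced from Lemma~\ref{L:CohomPropertiesPre}, and all four parts are indeed straightforward consequences of identities~(i) and~(ii) there, propagated from generators $\tau_a$ to words in $T_S$ and then to $RT_S$ by $R$-linearity. You also correctly observe that identity~(iii) is not used here, and your handling of the non-unital augmentation $\varepsilon$ in part~(2) (triviality phrased as $[\phi\cdot p]=\varepsilon(p)[\phi]$) matches the way the paper later invokes part~(2) in the proof of Proposition~\ref{P:CohomDecompose}.
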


To terminate this long list of preliminaries, we recall the equivalence relation~$\sim$ on~$S$ (sometimes denoted by~${\sim_S}$ to avoid ambiguities) induced by $a \sim b \op a$. It divides~$S$ into classes, called \emph{orbits}. Their set, denoted by~$\Orb(S)$, receives an induced shelf structure which is  trivial: $\OO' \op \OO = \OO$. 

The following definition is central to this section.

\begin{definition}\label{D:proj}
A \emph{semi-strong projector} for a shelf $(S,\op)$ over a ring~$R$ is a $P \in R T_S$ which is
\begin{enumerate}
\item normalized: $\varepsilon (P)= 1$;
\item right $S$-invariant, i.e., $P \tau_a = P$ for all $a \in S$.
\end{enumerate}
It is called a \emph{strong projector} if it is moreover 
\begin{enumerate}[resume]
\item left $S$-invariant, i.e., $\tau_a P = P$ for all $a \in S$.
\end{enumerate}
\end{definition}   

The existence of a (semi-)strong projector for a shelf~$S$ heavily depends on the ring~$R$ one works with, as we will see in examples.

\begin{lemma}\label{L:proj}
\begin{enumerate}
\item A semi-strong projector is indeed a projector.
\item If~$S$ admits a strong projector, it is unique even among semi-strong projectors. 
\item A semi-strong projector acts on elements $b,b'$ from the same orbit of~$S$ in the same way, in the sense of $P \op b = P \op b' \in RS$.
\end{enumerate}
\end{lemma}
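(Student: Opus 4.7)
The plan is to reduce all three parts of the lemma to a single preliminary observation: both invariance conditions of Definition~\ref{D:proj} extend, by $R$-linearity, from the generators $\tau_a$ to all of $R T_S$. Explicitly, if $P$ is right $S$-invariant then $Pt = \varepsilon(t)\, P$ for every $t \in R T_S$; symmetrically, if $P$ is left $S$-invariant then $tP = \varepsilon(t)\, P$. The right-invariant version is proved as follows: any word $t = \tau_{a_1} \cdots \tau_{a_n}$ in $T_S$ satisfies $Pt = P$ by repeated application of $P\tau_a = P$, and then linearity together with the fact that $\varepsilon$ is an algebra map sending each generator to $1$ (so that $\varepsilon(t)$ is the sum of coefficients of $t$) gives the claim on arbitrary elements of $R T_S$.

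Given this identity, part (1) is immediate: normalization $\varepsilon(P) = 1$ combined with the identity applied to $t = P$ yields $P^2 = \varepsilon(P)\, P = P$. For part (2), let $P$ be a strong projector and $P'$ a semi-strong projector, and evaluate the product $P' P$ in two ways. Right-invariance of $P'$ applied to $t = P$ gives $P' P = \varepsilon(P)\, P' = P'$, while left-invariance of $P$ applied to $t = P'$ gives $P' P = \varepsilon(P')\, P = P$. Comparing, $P = P'$.

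For part (3), the orbit equivalence $\sim_S$ on $S$ is the symmetric-transitive closure of the relation $b \sim a \op b$, so it suffices to establish $P \op b = P \op (a \op b)$ for all $a, b \in S$ and then extend along chains of such generating steps. Unwinding the $RT_S$-action on $RS$, which is compatible with composition in $T_S$, one has $P \op (a \op b) = P \op (\tau_a \op b) = (P \tau_a) \op b = P \op b$ by right invariance.

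No step is a serious obstacle; the only mildly non-trivial moment is the bootstrap from the generator-level right invariance $P\tau_a = P$ to the full identity $Pt = \varepsilon(t)\, P$ for general $t \in R T_S$. I would state this identity as a short preliminary observation so that parts (1)--(3) read as clean one-line corollaries, each invoking it once.
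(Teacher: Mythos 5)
Your proof is correct and follows essentially the same route as the paper: extract the generator-level invariance conditions into the identity $Pt = \varepsilon(t)P$ (and its left-invariant analogue $tP = \varepsilon(t)P$) on all of $RT_S$, then read off each of the three claims as a one-line consequence. The paper states this preliminary identity more tersely but the underlying argument is the same.
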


\begin{proof}
Property~$2$ (or~$3$) defining projectors implies that for all $t \in R T$, one has $P t =\varepsilon(t) P$ (respectively, $t P =\varepsilon(t) P$). In particular,
\begin{enumerate}
\item For a semi-strong projector $P$, one has $ PP =  \varepsilon(P) P=P$. 
\item For a strong projector $P$ and a semi-strong projector $P'$, one has $ P' = \varepsilon(P) P'=P'P = \varepsilon(P') P = P$.
\end{enumerate}
The last point follows from $P \op b = (P \tau_a) \op b = P \op (a \op b)$ for all $a,b \in S$.
\end{proof}

The presence of a (semi-)strong projector considerably simplifies the study of the cohomology of our shelf, since it makes possible an adaptation of the key results of~\cite{EG_Betti}, together with their proofs.

\begin{proposition}\label{P:CohomDecompose}
Let $(S,\op)$ be a shelf admitting a semi-strong projector~$P$ over~$R$, and let~$A$ be an $R$-module.
\begin{enumerate}
\item The complex $C^\bullet(S,A)$ is then a direct sum of sub-complexes
\begin{align}\label{E:Decompose1}
C^\bullet(S,A) &= C^\bullet(S,A) \cdot P \oplus C^\bullet(S,A)\cdot(1-P).
\end{align}
\item The sub-complex $C^\bullet(S,A) \cdot P$ coincides with $C^\bullet_{inv}(S,A)$.
\item The sub-complex $C^\bullet(S,A) \cdot (1-P)$ is acyclic.
\item The complex inclusion $C^\bullet_{inv}(S,A) \hookrightarrow C^\bullet(S,A)$ induces an isomorphism in cohomology.
\end{enumerate}
\end{proposition}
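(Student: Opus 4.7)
The plan is to exploit the fact that $P$ is idempotent in $RT_S$ (Lemma~\ref{L:proj}(1)) and commutes with the differentials, and to deduce acyclicity from the already-established triviality of the $RT_S$-action on cohomology (Lemma~\ref{L:CohomProperties}(2)), bypassing any explicit contracting homotopy. For~(1), since $\pi^k\colon RT_S\to M^k$ is an $R$-algebra map and $P^2 = P$, the operator $\pi^k(P)$ is an idempotent endomorphism of $C^k(S,A)$, yielding the $R$-module decomposition $C^k(S,A) = C^k(S,A)\cdot P \oplus C^k(S,A)\cdot(1-P)$. Each summand is a sub-complex because, by Lemma~\ref{L:CohomProperties}(1), $RT_S$ acts by morphisms of complexes, so $\pi^k(P)$ (and hence $\Id-\pi^k(P)$) commutes with $d^k$.

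For~(2), I would check the two inclusions. If $\phi\in C^k_{inv}(S,A)$, the displayed remark preceding Lemma~\ref{L:CohomProperties} gives $\phi\cdot P = \varepsilon(P)\phi = \phi$, so $\phi\in C^k(S,A)\cdot P$. Conversely, any $\phi = \psi\cdot P$ satisfies $\phi\cdot\tau_a = \psi\cdot(P\tau_a) = \psi\cdot P = \phi$ by right $S$-invariance of $P$, so $\phi\in C^k_{inv}(S,A)$. For~(3), let $\phi$ be a cocycle in $C^k(S,A)\cdot(1-P)$. Then $\phi\cdot P = 0$ (because $(1-P)P = P-P^2 = 0$), whence $\phi\cdot(1-P) = \phi$. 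Lemma~\ref{L:CohomProperties}(2) tells us that every $t\in RT_S$ acts on $H^k(S,A)$ as multiplication by $\varepsilon(t)$; applied to $t = 1-P$, whose augmentation is $1-\varepsilon(P) = 0$, this forces $[\phi] = [\phi\cdot(1-P)] = 0$ in $H^k(S,A)$. So $\phi = d^{k-1}\eta$ in the ambient complex, and setting $\tilde\eta := \eta\cdot(1-P)\in C^{k-1}(S,A)\cdot(1-P)$, one has $d\tilde\eta = (d\eta)\cdot(1-P) = \phi\cdot(1-P) = \phi$, witnessing $\phi$ as a coboundary inside the sub-complex.

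Claim~(4) is then automatic: the inclusion $C^\bullet(S,A)\cdot P\hookrightarrow C^\bullet(S,A)$ has acyclic complementary summand by~(3), and by~(2) coincides with $C^\bullet_{inv}(S,A)\hookrightarrow C^\bullet(S,A)$. The only real subtlety lies in~(3): a direct proof would try to build an explicit contracting homotopy on $C^\bullet(S,A)\cdot(1-P)$, but since $P\in RT_S$ can involve composed translations $\tau_{a_1}\tau_{a_2}\cdots$, extending Lemma~\ref{L:CohomPropertiesPre}(2) from single $\tau_b$ to arbitrary $t\in T_S$ would require a non-trivial induction carrying around awkward correction terms. The indirect route above avoids this entirely by using that $1-P$ is itself a chain map to push a primitive of $\phi$ back into the correct summand.
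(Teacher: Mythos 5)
Your proof is correct and follows essentially the same route as the paper: for (3), both arguments compare the direct computation $\phi\cdot P = 0$ with the cohomologically trivial action $[\phi\cdot P] = \varepsilon(P)[\phi] = [\phi]$ from Lemma~\ref{L:CohomProperties}(2) to conclude $[\phi]=0$ in $H^k(S,A)$. The only extra touch is that you explicitly push the primitive back into the summand via $\tilde\eta = \eta\cdot(1-P)$; the paper leaves this step implicit, relying on the direct sum decomposition of complexes established in (1).
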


\begin{proof}
Point~$1$ follows from Lemmas~\ref{L:proj} and~\ref{L:CohomProperties}, Point~$2$ is a consequence of definitions, and Point~$4$ summarizes the preceding ones. Let us prove Point~$3$. Take a cocycle $c \cdot (1-P)$ in  $C^k(S,A) \cdot (1-P)$. According to Lemma~\ref{L:CohomProperties}, modulo coboundaries one has
$(c \cdot (1-P))\cdot P =  \varepsilon(P) c \cdot (1-P) $, which is simply $c \cdot(1-P)$. Compare this with
$(c \cdot (1-P))\cdot P = c \cdot (1-P) P = c \cdot (P-P) = 0$ to deduce the triviality of~$c \cdot (1-P)$ in cohomology.
\end{proof}

The proposition can fail for shelves without semi-strong projectors; a counter-example will be given in Section~\ref{S:Cyclic}.

The cohomology of a shelf with a \textit{strong} projector can be described very explicitly: 

\begin{theorem}\label{T:CohomComput}
Let $(S,\op)$ be a shelf admitting a strong projector over~$R$, and let~$A$ be an $R$-module. Then for all $k \ge 0$, one has the following morphism of $R$-modules:
\begin{align}
H^k(S,A) \cong &\, C^k(\Orb(S),A) = A^{\Orb(S)^{\times k}},\notag\\
[\phi \circ pr^{\times k}] \mapsfrom & \; \phi,\label{E:HkWithProj}
\end{align}
where the projection $pr \colon S \twoheadrightarrow \Orb(S)$ sends an element of~$S$ to its orbit.
\end{theorem}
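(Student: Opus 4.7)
The plan is to combine Proposition~\ref{P:CohomDecompose}(4)---which reduces $H^\bullet(S,A)$ to the cohomology of the invariant subcomplex $C^\bullet_{inv}(S,A)$---with an explicit chain retraction of $C^\bullet(S,A)$ onto the pullback subcomplex from $\Orb(S)^{\times \bullet}$ built out of the strong projector.

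I will introduce the operator $\Phi\colon C^k(S,A) \to C^k(S,A)$ given by
\[
\Phi(\phi)(\ob) = \phi(P \op b_1, \ldots, P \op b_k),
\]
where $\phi$ is extended multilinearly in each slot to arguments in $RS$. By Lemma~\ref{L:proj}(3), $P \op b$ depends only on the orbit of $b$, so $\Phi(\phi)$ factors through $pr^{\times k}$; equivalently, $\Phi$ takes values in the image of the pullback map $p\colon C^\bullet(\Orb(S),A) \hookrightarrow C^\bullet(S,A)$. Using $P^2 = P$ from Lemma~\ref{L:proj}(1) one obtains $\Phi^2 = \Phi$, and $\varepsilon(P) = 1$ yields $\Phi \circ p = \mathrm{id}$.

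Next, I will verify two key identities: $d \circ \Phi = 0$ and $\Phi \circ d = 0$. The first is immediate, since pullback cochains are constant on tuples of orbits and $b_i \op b_j \sim b_j$ forces the paired summands in each term of the differential formula~\eqref{E:RackCohom} to coincide. The second crucially uses the strong-projector condition $\tau_a P = P$: it implies $(P \op b_j) \op (P \op b_l) = P \op b_l$ on multilinear extension, again collapsing summands pairwise. Together these make $\Phi$ an idempotent chain map onto the pullback subcomplex, and injectivity of the map claimed in the theorem is immediate: if $p(\phi) = d\eta$, then $p(\phi) = \Phi(p(\phi)) = \Phi(d\eta) = 0$, forcing $\phi = 0$.

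For surjectivity, one must show every cocycle $\psi$ satisfies $[\psi] = [\Phi(\psi)]$. The plan is to replace $\psi$ by the cohomologous invariant representative $\psi \cdot P \in C^k_{inv}(S,A)$ furnished by Proposition~\ref{P:CohomDecompose}(4); one then has $\Phi(\psi \cdot P) = \Phi(\psi)$ thanks to the iterated identity $tP = P$ for all $t \in T$. Next, decompose $\Phi = \Phi_1 \circ \cdots \circ \Phi_k$, where $\Phi_i$ applies $P$ only to the $i$-th coordinate, and telescope $\psi \cdot P - \Phi(\psi \cdot P)$ into $k$ single-coordinate differences; each is to be exhibited as a coboundary by iteratively invoking Lemma~\ref{L:CohomPropertiesPre}(2) together with $\tau_a P = P$ to decouple coordinates. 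Constructing the explicit $(k-1)$-cochains that realize this chain homotopy between the identity and $\Phi$ on invariant cocycles is the main obstacle; once in hand, chaining with Proposition~\ref{P:CohomDecompose}(4) delivers the claimed iso $\phi \mapsto [\phi \circ pr^{\times k}]$.
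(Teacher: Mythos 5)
Your setup is correct and, in fact, your operator $\Phi$ is exactly the paper's $\wP^k = P^kP^{1,k-1}\cdots P^{k-1,1}$ from Remark~\ref{R:ZB} in disguise (for a strong projector these two descriptions agree). The claims that $\Phi$ is idempotent, has image the pullback subcomplex, satisfies $d\circ\Phi = 0$ and $\Phi\circ d = 0$, and that these give injectivity of $\phi\mapsto[\phi\circ pr^{\times k}]$, all check out; the verification of $\Phi\circ d = 0$ using $\tau_aP = P$ to collapse the two halves of each term of~\eqref{E:RackCohom} is a nice observation that the paper does not isolate.

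The genuine gap is exactly the one you flag: you need every invariant cocycle $\eta$ to satisfy $\eta - \Phi(\eta)\in B^\bullet(S,A)$, but the telescoping plan is not straightforwardly repairable. The single-slot operators $\Phi_i$ do commute with the $\pi^\bullet(RT)$-action (so they preserve $C^\bullet_{inv}$), but they are not chain maps: partial evaluation and differential permute the slots, so $(\Phi_i\phi)_b \ne \Phi_i(\phi_b)$ and there is no analogue of Lemma~\ref{L:CohomPropertiesPre}(2) expressing $(1-\Phi_i)\eta$ as a coboundary. Consequently the intermediate stages $\Phi_1\cdots\Phi_{i-1}(1-\Phi_i)\eta$ of your telescope carry no cocycle structure you can exploit. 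The paper sidesteps this by never working with single-slot projectors: it inducts on~$k$, using instead the simultaneous all-but-first-slot operator $P^{1,\bullet-1}$, which it \emph{does} show commutes with the differential on $C^\bullet_{inv}$ via~\eqref{E:PCommute} and Lemma~\ref{L:CohomPropertiesPre}. This gives a splitting $C^\bullet_{inv} = (C^\bullet_{inv})P^{1,\bullet-1}\oplus(C^\bullet_{inv})(1-P^{1,\bullet-1})$; the first summand is identified with $\bigoplus_{\OO}C^{\bullet-1}_{inv}$ via partial evaluation (feeding the induction), and acyclicity of the second is shown not by a global chain homotopy but degree by degree: for a cocycle $\phi$ there, each $\phi_b$ lies in $C^{k-1}(1-P)$ (acyclic by Proposition~\ref{P:CohomDecompose}), so $\phi_b = d\psi^{(b)}$, and the assembled $\psi$ gives $\phi = -d(\psi\cdot P)$. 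You would need to supply an argument of comparable substance to finish your route.
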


\begin{remark}
If~$A$ is moreover an $R$-algebra, then~$H^k(S,A)$ (and thus $H^k_{inv}(S,A)$) is a free $A$-module, with a basis given by the classes of the \emph{orbit indicator functions}, indexed by $(\OO_1, \ldots, \OO_k) \in \Orb(S)^{\times k}$:
\begin{align*}
\delta_{\OO_1, \ldots, \OO_k}(\oa) &= \begin{cases} 1 & \text{ if } a_i \in \OO_i, 1 \le i \le k,\\
0 & \text{ otherwise}. \end{cases}
\end{align*}
\end{remark}

\begin{remark}\label{R:mono}
If~$S$ is monogenic, then it has a single orbit, and the theorem describes $H^k(S,A)$ as containing the classes of constant maps only (with different constants giving different classes). If~$A$ is moreover an $R$-algebra, then the $A$-module $H^k(S,A) \cong A$ is freely generated by the class of the constant map $\phi^k_{const} \colon \oa \mapsto 1$.
\end{remark}

\begin{proof}
Suppose $k \ge 2$, the case $k \le 1$ being easy. Together with $\pi^k\colon R T \rightarrow M^k$, consider its ``partial versions'' $\pi^{i,k-i}\colon R T \rightarrow M^k$, $0 \le i \le k$. These are $R$-algebra maps sending each $\tau_a$, $a \in S$, to $T^{i,k-i}_a \colon \phi \mapsto \phi \circ (\Id_S^{\times i} \times \tau_a^{\times (k-i)})$. Put $P^{i,k-i}=\pi^{i,k-i}(P)$, and $P^{k}=P^{0,k}=\pi^k(P)$. They are projectors since~$P$ is so. They behave nicely with respect to partial evaluations, e.g., one has a useful property
\begin{align}\label{E:Partials}
((\phi)P^{i,k-i})_b &= (\phi_b)P^{i-1,k-i}
\end{align}
for any $i \ge 1$, $b \in S$, and $\phi \in C^k(S,A)$. This property often reduces the study of $P^{i,k-i}$ to that of~$P^k$. For instance, we apply it to show that $P^{1,k-1}$ commutes with~$\pi^{k}(R T)$:
\begin{align}\label{E:PCommute}
&((\phi)P^{1,k-1} \cdot \tau_a)_b = ((\phi)P^{1,k-1})_{a \op b} \cdot \tau_a
= (\phi _{a \op b} \cdot P) \cdot \tau_a = \phi _{a \op b} \cdot P \\
&= (\phi _{a \op b} \cdot \tau_a ) \cdot P = (\phi \cdot \tau_a)_b \cdot P = ((\phi \cdot \tau_a)P^{1,k-1})_b \notag
\end{align}
(we used Point~$3$ of Lemma~\ref{L:CohomPropertiesPre} and the definition of a strong projector). 
In particular, $P^{1,k-1}$ commutes with $P^k$, yielding an endomorphism $P^{1,\bullet-1}$ of the complex $C^\bullet_{inv}(S,A) = C^\bullet(S,A) \cdot P$; indeed, $P^{1,\bullet-1}$ commutes with the differential since for any $b \in S$ and $\phi \in C^k_{inv}(S,A)$, one has
\begin{align*}
(d^k((\phi) P^{1,k-1}))_b &\overset{(\ast)}{=} - d^{k-1}(((\phi)P^{1,k-1})_b) =- d^{k-1}(\phi_b \cdot P)\\
& =- (d^{k-1}\phi_b ) \cdot P \overset{(\ast\ast)}{=} (d^k \phi)_b \cdot P = ((d^k \phi)P^{k,1})_b
\end{align*}
(in $(\ast)$ and $(\ast\ast)$ we used Point~$2$ of Lemma~\ref{L:CohomPropertiesPre} and the $S$-invariance of~$\phi$ and~$(\phi) P^{1,k-1}$). Our $S$-invariant complex thus decomposes as
\begin{align}\label{E:Decompose2}
C^\bullet_{inv}(S,A) & = (C^\bullet_{inv}(S,A))P^{1,\bullet-1} \oplus (C^\bullet_{inv}(S,A))(1-P^{1,\bullet-1}).
\end{align}

Let us first study the sub-complex $(C^\bullet_{inv})P^{1,\bullet-1} = (C^\bullet \cdot P)P^{1,\bullet-1} = (C^\bullet) P^{1,\bullet-1} \cdot P$. A map~$\phi$ from that complex and all its partial evaluations~$\phi_b$ are $S$-invariant (cf.~\eqref{E:Partials}). We will show that the value~$\phi(b,\ob)$ depends on the orbit of the first coordinate~$b$, and not on~$b$ itself. Indeed, for any $a \in S$, one has 
\begin{align}\label{E:ConstOnOrbit}
\phi_{a \op b}  = \phi_{a \op b} \cdot \tau_a = (\phi \cdot \tau_a)_b = \phi_b.
\end{align}
 This yields an isomorphism of complexes
\begin{align}
(C^\bullet_{inv}(S,A))P^{1,\bullet-1} & \overset{\sim}{\longrightarrow} \bigoplus_{\OO \in \Orb(S)} C^{\bullet-1}_{inv}(S,A),\label{E:DecomposeOrbits}\\
\phi &\longmapsto ((-1)^{\bullet}\phi_{b_{\OO}})_{\OO \in \Orb(S)},\notag
\end{align}
where~$b_{\OO}$ is an arbitrary representative of the orbit~$\OO$; this is indeed a map of complexes, since Lemma~\ref{L:CohomPropertiesPre} ensures that $(d^k \phi)_{b_{\OO}}  = - d^{k-1} \phi_{b_{\OO}}$.

We next show that the complex $(C^\bullet_{inv})(1-P^{1,\bullet-1})$ is acyclic; the theorem then follows by induction, using Proposition~\ref{P:CohomDecompose}. Take a cocycle~$\phi$ in $(C^k_{inv})(1-P^{1,k-1})$; it is $S$-invariant, and all~$\phi_b$ lie in $C^{k-1} \cdot (1-P)$ (cf.~\eqref{E:Partials}). Lemma~\ref{L:CohomProperties} gives $d^{k-1}\phi_b = -(d^k \phi)_b = 0$, thus $\phi_b$ is a coboundary (Proposition~\ref{P:CohomDecompose}), say, $\phi_b = d^{k-2} \psi^{(b)}$. Assemble these $\psi^{(b)}$ into a $\psi \in C^{k-1}$ defined by $\psi_b = \psi^{(b)}$. We will now present~$\phi$ as~$-d^{k-1}((\psi)P)$, implying its triviality in cohomology. Take a $t \in T$. One calculates
\begin{align*}
d^{k-2}((\psi \cdot t)_b) &= d^{k-2} (\psi_{t \op b} \cdot t) = (d^{k-2} \psi_{t \op b}) \cdot t = \phi_{t \op b} \cdot t = (\phi \cdot t)_b = \phi_b,
\end{align*}
implying $d^{k-2}((\psi\cdot P)_b) = \varepsilon(P)\phi_b = \phi_b$. Since $\psi \cdot P \in C^{k-1} \cdot P = C^{k-1}_{inv}$ is $S$-invariant, Lemma~\ref{L:CohomPropertiesPre} gives $d^{k-2}((\psi \cdot P)_b) =-(d^{k-1}(\psi \cdot P))_b$, hence $\phi = -d^{k-1}(\psi \cdot P)$ as desired.
\end{proof}

\begin{remark}\label{R:SmallGroups}
For $k \le 2$, the map~\eqref{E:HkWithProj} provides a description of $H^k_{inv}(S,A)$ even for shelves without semi-strong projectors.
\end{remark}
%and for other k?

\begin{remark}\label{R:ZB}
An inductive argument using the isomorphism of complexes~\eqref{E:DecomposeOrbits} allows one to continue decompositions~\eqref{E:Decompose1} and~\eqref{E:Decompose2}, leading to the complex decomposition
\begin{align}\label{E:Decompose3}
C^\bullet(S,A) & \;=\; (C^\bullet(S,A))\wP^{\bullet}\; \bigoplus \; (\text{an acyclic sub-complex}),
\end{align}
where $\wP^k = P^k P^{1,k-1} P^{2,k-2} \cdots P^{k-1,1} \in M^k$. An argument repeating~\eqref{E:PCommute} shows that any~$P^{i,k-i}$ and~$P^{j,k-j}$ commute; mimicking~\eqref{E:ConstOnOrbit}, one deduces that a~$\phi$ lies in $(C^k(S,A))\wP^{k}$ if and only if $\phi(b_1,\ldots,b_k)=\phi(b'_1,\ldots,b'_k)$ whenever for all~$i$, $b_i$ and~$b'_i$ belong to the same orbit. From the definition~\eqref{E:RackCohom}, one immediately sees that $d^k \phi =0$ for such a~$\phi$. This transforms~\eqref{E:Decompose3} into
\begin{align}\label{E:Decompose4}
(C^\bullet(S,A),d^\bullet) & \;\cong\; (C^\bullet(\Orb(S),A),0) \; \bigoplus \; (\text{an acyclic complex}),
\end{align}
rendering the description~\eqref{E:HkWithProj} of the cohomology of~$S$ more precise. In particular, one sees that the cocycle $R$-modules split: 
\begin{align}
Z^k(S,A) &\cong H^k(S,A) \oplus B^k(S,A).\label{E:Decompose5}
\end{align}
\end{remark}

Let now~$S$ be finite. The theorem gives $\dim_{\QQ}(H^k(S,\QQ)) = |\Orb(S)|^k$. Together with $\dim_{\QQ}(C^k(S,\QQ)) = |S|^k$, one obtains (for instance by induction)
\begin{corollary}\label{C:ZB}
For a finite shelf $(S,\op)$ admitting a strong projector, one has  
\begin{align*}
\dim_{\QQ}(Z^k(S,\QQ)) &= P_k(|S|) + P_{k+1}(|\Orb(S)|)+1,\\
\dim_{\QQ}(B^k(S,\QQ)) &= P_k(|S|) - P_{k}(|\Orb(S)|)
\end{align*} 
for all $k \ge 0$. (See~\eqref{E:Pk} for the definition of the polynomials~$P_k$.)
\end{corollary}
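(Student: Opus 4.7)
The plan is to run a standard dimension-counting argument, pivoting on Theorem~\ref{T:CohomComput} and the short exact sequences attached to the cochain complex. Set $s = |S|$ and $o = |\Orb(S)|$. Theorem~\ref{T:CohomComput} already yields $\dim_\QQ H^k(S,\QQ) = o^k$, and by definition $\dim_\QQ C^k(S,\QQ) = s^k$. Splitting $C^k \twoheadrightarrow B^{k+1}$ through $C^k/Z^k$, and using $Z^k/B^k \cong H^k$, gives the two identities
\begin{align*}
\dim Z^k + \dim B^{k+1} &= s^k,\\
\dim Z^k &= o^k + \dim B^k,
\end{align*}
hence the recursion $\dim B^{k+1} + \dim B^k = s^k - o^k$, with initial value $\dim B^0 = 0$.

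The second step is purely algebraic: verify the polynomial identity
\begin{equation*}
P_{k+1}(x) + P_k(x) = x^k - 1 \qquad \text{for all } k \ge 0.
\end{equation*}
This follows directly from the definition~\eqref{E:Pk}: since $k$ and $k+1$ have opposite parities, the sum telescopes to $\frac{x^{k+1}+x^k-x-1}{x+1} = x^k-1$.

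Granted this identity, an induction on $k$ shows $\dim_\QQ B^k(S,\QQ) = P_k(s) - P_k(o)$: the base case $k=0$ is immediate, and the inductive step matches the recursion
\begin{equation*}
(P_{k+1}(s)-P_{k+1}(o)) + (P_k(s)-P_k(o)) = (s^k - 1) - (o^k - 1) = s^k - o^k.
\end{equation*}
This establishes the second claimed formula. Substituting back into $\dim Z^k = o^k + \dim B^k$ and rewriting $o^k - P_k(o) = P_{k+1}(o) + 1$ via the same polynomial identity yields $\dim Z^k = P_k(s) + P_{k+1}(o) + 1$, which is the first formula.

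There is no real obstacle here: the only non-formal input is Theorem~\ref{T:CohomComput} (which supplies $\dim H^k$), and the rest is the standard bookkeeping $\dim C^k = \dim Z^k + \dim B^{k+1}$ combined with a one-line check of the polynomial identity $P_{k+1}+P_k = x^k-1$. The mildly delicate point is keeping track of the parity convention in~\eqref{E:Pk}, which is exactly what makes the identity work regardless of $k$.
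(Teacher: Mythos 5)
Your argument is correct and is exactly the route the paper hints at (``one obtains (for instance by induction)''): the two dimension identities $\dim Z^k + \dim B^{k+1} = |S|^k$ and $\dim Z^k = \dim B^k + |\Orb(S)|^k$ (the latter from Theorem~\ref{T:CohomComput}) give the recursion $\dim B^{k+1} + \dim B^k = |S|^k - |\Orb(S)|^k$, which together with the easily checked identity $P_{k+1}(x)+P_k(x)=x^k-1$ and the base case $B^0=0$ closes the induction. No gap; same approach as the paper.
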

In the finite monogenic case, these formulas read
$$\dim_{\QQ}(Z^k(S,\QQ)) = P_k(|S|)+1, \qquad \dim_{\QQ}(B^k(S,\QQ)) = P_k(|S|).$$

We next turn to examples, which include racks and basic finite monogenic shelves (cf. Introduction).

\begin{example}\label{Ex:Rack} 
For a rack~$S$, all translations $\tau_a$, $a \in S$ are invertible. Using this, $S$ can be shown to admit a semi-strong projector~$P$ if and only if its translation semigroup~$T_S$ is finite (which holds for instance for finite racks). In this case $T_S$ is a group, and $P=\frac{1}{|T_S|} \sum_{t \in T_S}t$, which is a strong projector. This~$P$ works  for any coefficient ring~$R$ where~$|T_S|$ is invertible. One recovers the cohomology calculations from~\cite{EG_Betti}.
\end{example}

\begin{example}\label{Ex:Cyclic} 
Take the cyclic shelf $C_{r,m}$, $r \ge 0$, $m \ge 1$. Its translation semigroup~$T_{C_{r,m}}$ is generated by the map $\theta=\tau_0 \colon b \mapsto b+1$, with the convention $(m-1)+1 = 0$. More precisely, 
\begin{align*}
T_{C_{r,m}} &\cong {\raisebox{1.5mm}{$<\theta>$} \big/ \raisebox{-1.5mm}{$(\theta^{r+m} = \theta^{r})$}} \, .
\end{align*} 
The only possibility for a strong projector over~$R$ is $P = \frac{1}{m} \sum_{i=0}^{m-1}\theta^{r+i}$, which works if the ring~$R$ contains $\frac{1}{m}$. This includes the case $R=\QQ$. For a module~$A$ over such an~$R$ and for all~$k \ge 0$, Theorem~\ref{T:CohomComput} yields $H^k(C_{r,m},A) \cong A$. When~$A$ is an $R$-algebra, this becomes an $A$-module freely generated by $[\phi^k_{const}]$ (Remark~\ref{R:mono}). However, the classes of constant maps do \textit{not} generate all integral cohomology groups $H^k(C_{r,m},\ZZ)$, as we will see in Section~\ref{S:Cyclic}.
\end{example}

\begin{example}\label{Ex:Laver} 
Consider next the Laver table~$A_n$, $n \ge 0$. We will use the very special properties of its elements~$2^n$ and~$2^n-1$:
\begin{enumerate}
\item The element $2^n$ is central\footnote{The term \textit{central element} is inspired by conjugation quandles.} for~$\op$: for all~$a\in A_n$, one has
\begin{align}
2^n \op a &= a,\label{E:LT1} \\
a \op 2^n &= 2^n.\label{E:LT2}
\end{align}
\item The left translation $\tau_{2^n-1}$ is a projector to~$2^n$: for all~$a\in A_n$, one has
\begin{align}
(2^n-1) \op a &=2^n.\label{E:LT3}
\end{align}
\end{enumerate}
For a proof, see for instance~\cite{Dehornoy2}; the example from Figure~\ref{F:First} can serve as an illustration. The relations above imply that the translation $\tau_{2^n-1}$ satisfies the absorption property $\tau_a \tau_{2^n-1} = \tau_{2^n-1} \tau_a = \tau_{2^n-1}$ for all $a\in A_n$. Therefore, $P=\tau_{2^n-1}$ is a strong projector in~$R T_{A_n}$ for any commutative ring~$R$. Remark~\ref{R:mono} and property~\eqref{E:Decompose5} then yield Points~$1$ and~$3$ of Theorem~A for Laver tables. Note that in this case, $[\phi^k_{const}]$ generates the $k$th cohomology group for coefficients in any algebra~$A$ over any ring~$R$. 
\end{example}

\begin{example}\label{Ex:E} 
We proceed with a very general construction of finite shelves, comprising the two families above. Take an $n\ge 0$ and maps $\rho\colon A_n \rightarrow \NN \cup \{0\}$, $\mu\colon A_n \rightarrow \NN$  such that 
\begin{align}\label{E:DefE}
(\,b = c \op a \text{ in } A_n\,) \qquad &\Longrightarrow\qquad (\,\mu(b) \,|\, \mu(a) \quad\&\quad \rho(b) \le \rho(a)+1\,).
\end{align}% a comment on the ordering ? A_n \op a \supseteq A_n \op b
These data allow one to define the following shelf:
\begin{align}
&E^*_{n,\rho,\mu} = \big\{\, (a,i) \,\big|\, a \in A_n,\, 0 \le i < \rho(a)+\mu(a) \,\big\},\notag\\ &(a,i) \op (b,j) = (a \op b,\, j+1),\label{E:DefE2}
\end{align}
where~$a \op b$ utilizes the shelf operation of~$A_n$, and we set~$(c,\alpha+s\mu(c)) = (c,\alpha)$ for all~$c \in A_n$, $s \ge 1$, $\rho(c) \le \alpha < \rho(c)+\mu(c)$. The self-distributivity~\eqref{E:SD} follows from the defining condition~\eqref{E:DefE}; see~\cite{DraGro,Dehornoy2,Smedberg} for more details. Taking $n=0$, one recovers the cyclic shelf $C_{\rho(1),\mu(1)}$; choosing constant maps $\rho\colon a \mapsto 0$, $\mu\colon a \mapsto 1$, one gets the Laver table~$A_n$. Thus the $E^*_{n,\rho,\mu}$  ``interpolate'' between those two families. Every shelf of this general type has a single orbit, without necessarily being monogenic. Put $E_{n,\rho,\mu} = E^*_{n,\rho,\mu} \coprod \{(1,-1)\}$, and keep the definition~\eqref{E:DefE2}; one gets a shelf with a single generator~$(1,-1)$. Combining our results for cyclic shelves and Laver tables, one sees that $P=\frac{1}{\mu(2^n)} \sum_{i=1}^{\mu(2^n)} \tau_{(2^n-1,0)}^{\rho(2^n)+i}$ is a strong projector over a ring~$R$ containing $\frac{1}{\mu(2^n)}$ for both $E^*_{n,\rho,\mu}$ and $E_{n,\rho,\mu}$. For such an~$R$, one obtains $H^k(E^{(*)}_{n,\rho,\mu},A) \cong A$. When~$A$ is an $R$-algebra, this becomes an $A$-module freely generated by $[\phi^k_{const}]$.
\end{example}

We now study the existence of (semi-)strong projectors in more detail. In contrast with the examples above, there are shelves for which projectors do not exist or are not unique:

\begin{example}\label{Ex:Free}
Consider the shelf~$\FF$ freely generated by a single element~$\gamma$. Conditions $l(\gamma)=1$ and $l(a \op b) = l(b)+1$ for all $a,b \in \FF$ uniquely define a length function $l \colon \FF \to \NN$. Every $\tau_a$  increases~$l$ by~$1$. This induces an $\NN$-grading on the $R$-algebra $R T_{\FF}$, with $deg(\tau_a)=1$, which renders equality $P \tau_a = P$ impossible for non-zero~$P$. Hence the free monogenic shelf~$\FF$ admits no semi-strong projectors. 
\end{example}

\begin{example}\label{Ex:1Retract}
A set~$S$ endowed with the operation $a \op b = a$ is always a shelf. In this case the translation semigroup~$T_S$ consists of translations~$\tau_a \colon b \mapsto a$ only, each of which is a semi-strong projector. Their properly weighted linear combination (e.g., $\tau_a +\tau_{a'}-\tau_{a''}$) are semi-strong projectors as well. However, no strong projectors are available if~$S$ has at least two elements, because of the uniqueness property (Lemma~\ref{L:proj}). 
\end{example}

Most projectors we saw in examples were ``average-type''. This is not a mere coincidence, as we now explain.

\begin{definition}
A shelf $(S,\op)$ with a finite translation semigroup~$T_S$ is \emph{quasi-finite}.
\end{definition}

All finite shelves are clearly quasi-finite. The converse is not true:
\begin{example}
Take a finite set~$S$ and a map $f \colon S \to S$. Operation $a \op b = f(b)$ defines a shelf structure on~$S$. Take another (possibly infinite) set~$I$, fix an $s \in S$, and extend~$f$ to $S \sqcup I$ by $f(b) = s$, $b \in I$. Then $S \sqcup I$ with $a \op b = f(b)$ is a quasi-finite but not necessarily finite shelf.
\end{example}

\begin{definition}
A \emph{semi-projective family} for a shelf $(S,\op)$ is a finite sub-set~$T'$ of~$T_S$ on which the right multiplication by every~$\tau_a$ induces a permutation, i.e., $T' = T'\tau_a$ as sets. It is called a \emph{projective family} if moreover $T' = \tau_a T'$ for all~$\tau_a$.
\end{definition}

(Semi-)strong projectors can be described in terms of (semi-)projective families:

\begin{proposition}\label{P:ProjectorAverage}
Let $(S,\op)$ be a shelf.
\begin{enumerate}
\item One has the following trichotomy: 
\begin{description}
\item[Option A] $S$ has no semi-projective families;
\item[Option B] $S$ has at least two semi-projective families and no projective families;
\item[Option C] $S$ has only one semi-projective family~$T'$, which is in fact projective.
\end{description}
\item $S$ admits a strong projector~$P$ over a ring~$R$ if and only if Option~C holds and~$|T'|$ is invertible in~$R$. In this case, $P = \frac{1}{|T'|} \sum_{t \in T'}t$.
\item $S$ admits semi-strong non-strong projectors over a ring~$R$ if and only if Option~B holds and for some $k \in \NN$ there exist pairwise disjoint semi-projective families $T_1, \ldots, T_k$ of~$S$ and non-zero elements $\alpha_1, \ldots, \alpha_k$ of~$R$ satisfying $\sum_{i} |T_i| \alpha_i = 1$. In this case, $P = \sum_{i} \alpha_i \sum_{t \in T_i}t$ is a semi-strong projector. This yields a complete list of semi-strong projectors over~$R$ without repetition, if one requires the~$\alpha_i$ to be pairwise distinct.
\item A quasi-finite shelf $(S,\op)$ has a (possibly empty) collection of pairwise disjoint semi-projective families of the same size such that any semi-projective family of~$S$ is a union of some of these. The semigroup~$T_S$ acts transitively on this collection by the left multiplication. 
\end{enumerate}
\end{proposition}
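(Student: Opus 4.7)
The plan rests on studying right- and left-stability of finite subsets of $T_S$. First I collect three elementary observations: (i) if $T'$ is semi-projective, then right multiplication by any $t \in T_S$ is a bijection $T' \to T'$ (since each $\tau_a$ is, and $t$ is a product), so $T' T_S = T'$; (ii) if $T_1, T_2$ are semi-projective with $T_1 \cap T_2 \ne \emptyset$, then $T_1 \cap T_2$ is semi-projective, and if additionally $T_1 \subseteq T_2$, then the complement $T_2 \setminus T_1$ is semi-projective (by restricting and deleting ranges of the bijections); (iii) for any $t \in T_S$ and semi-projective $T'$, the set $tT'$ is semi-projective, since $(tT')\tau_a = t(T'\tau_a) = tT'$. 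The trichotomy in (1) then reduces to a short dichotomy: uniqueness of a semi-projective family forces $tT' = T'$ for every $t \in T_S$ by (iii), making $T'$ projective. Conversely, if $T'$ is projective and $T''$ is any semi-projective family, pick $t' \in T'$ and $t'' \in T''$: the product $t''t'$ lies in $T''$ (by $T''t' = T''$) and in $T'$ (by $t''T' = T'$), so $T' \cap T'' \ne \emptyset$; moreover $t''T' = T' \subseteq T''$ shows $T' \subseteq T''$. If $T'' \supsetneq T'$, then by (ii) the complement $T'' \setminus T'$ is a nonempty semi-projective family, to which the same argument gives $T' \subseteq T'' \setminus T'$, which is impossible for nonempty $T'$.

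For (2), the forward direction is a direct computation: with $T'$ projective and $|T'|$ invertible in $R$, $P = |T'|^{-1} \sum_{t \in T'} t$ satisfies $\varepsilon(P) = 1$ and $\tau_a P = P\tau_a = P$ thanks to the bijection property. The converse is the crux. Given a strong projector $P = \sum_t \alpha_t t$ with support $T' = \{t : \alpha_t \ne 0\}$, the identity $P\tau_a = P$ yields the coefficient relation $\alpha_s = \sum_{t : t\tau_a = s} \alpha_t$. For $s \in T'$ we have $\alpha_s \ne 0$, so at least one summand on the right is nonzero, forcing $s \in T'\tau_a$; thus $T' \subseteq T'\tau_a$. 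Combined with $|T'\tau_a| \le |T'|$, this implies $T' = T'\tau_a$ and bijectivity of right multiplication by $\tau_a$ on $T'$. The identity $\tau_a P = P$ gives left-bijectivity analogously, so $T'$ is projective and Option~C applies. Coefficient constancy follows because $T'$ is then a finite subsemigroup on which multiplication is bijective on both sides, hence a group, and its translation action on itself is transitive; invariance $\alpha_{t\tau_a} = \alpha_t$ therefore forces $\alpha$ to be constant on~$T'$, and $\varepsilon(P) = 1$ fixes $\alpha = 1/|T'|$, requiring $|T'|$ invertible.

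Part~(3) follows the same template but only uses right-invariance: writing a semi-strong projector $P = \sum_t \alpha_t t$ with $T' = \mathrm{supp}(P)$, the support decomposes into disjoint right-$T_S$-orbits, each of which is semi-projective by the same size chase. Coefficients are constant on each orbit, and grouping by value gives the stated presentation $P = \sum_i \alpha_i \sum_{t \in T_i} t$ with pairwise distinct $\alpha_i$ and $\sum_i |T_i| \alpha_i = 1$; the converse is a direct verification that this formula yields a semi-strong projector, which fails to be strong precisely when more than one $T_i$ occurs. For~(4), quasi-finiteness reduces matters to the Rees-type structure of the kernel (minimum two-sided ideal) $K$ of the finite semigroup $T_S$: $K$ is a disjoint union of minimal right ideals, all of the same cardinality, on which $T_S$ acts transitively by left multiplication. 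One checks that minimal right ideals of $T_S$ are precisely the minimal nonempty semi-projective families, and every semi-projective family, being a finite union of right-$T_S$-orbits, is a union of such minimal ones.

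The main obstacle is the converse in~(2): over an arbitrary ring~$R$, coefficient cancellations in $P\tau_a$ might a priori make $\mathrm{supp}(P)$ strictly smaller than $T'\tau_a$, breaking a naive bijection argument. The fix is the asymmetric observation that every nonzero coefficient on the left forces at least one nonzero preimage on the right, yielding the one-sided inclusion $T' \subseteq T'\tau_a$; cardinality then upgrades this to equality, after which the internal group structure of $T'$ supplies the transitive $T_S$-action needed to force coefficient constancy. A secondary subtlety is the semigroup-theoretic content of~(4), which can either be cited or reproved directly by iterating the intersection and complement lemmas from~(1) to extract minimal semi-projective families from any given one.
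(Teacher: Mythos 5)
Your proposal is correct in its essential structure, and it treats the crux --- the converse direction of Point~2 over an arbitrary ring~$R$ --- more carefully than the paper. The paper's one-line claim that ``$P\tau_a = P$ is equivalent to right multiplication by~$\tau_a$ permuting each~$T_i$'' glosses over possible coefficient cancellation in~$RT_S$; your one-sided inclusion argument (from $\alpha_s \neq 0$ deduce a nonzero preimage, hence $T' \subseteq T'\tau_a$, then upgrade to equality by finiteness) cleanly fills that gap, and the subsequent observation that $T'$ is a finite cancellative subsemigroup, hence a group acting transitively on itself, gives coefficient constancy. For Point~1 you give an intrinsic combinatorial argument for the incompatibility of a projective family with a distinct semi-projective one (the intersection/complement calculus plus $t''T' = T' \subseteq T''$), whereas the paper deduces it more slickly over $\QQ$ from the uniqueness of a strong among semi-strong projectors (Lemma~\ref{L:proj}); both are about equally short. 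For Point~4 you invoke the kernel (Rees-Sushkevich) structure of the finite semigroup~$T_S$, which is workable but needs care --- e.g., it is not immediate in a general finite semigroup that $I\tau_a$ is again a right ideal when $I$ is a minimal one --- while the paper's closure-under-union/intersection/difference observations are more self-contained; you rightly flag the direct route as an option. Two small inaccuracies, neither load-bearing: ``right-$T_S$-orbits'' should be orbits of the \emph{group} of permutations of $\mathrm{supp}(P)$ generated by right translation by the $\tau_a$ (the naive right-$T_S$-orbits in a semigroup need not partition a set), and the closing parenthetical in Point~3 is off --- under Option~B a projector $\alpha_1\sum_{t\in T_1}t$ with a single semi-projective $T_1$ is already non-strong, so $P$ fails to be strong not ``precisely when more than one $T_i$ occurs'' but simply because Option~B rules out any projective family.
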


\begin{definition}
The semi-projective families described in Point~4 are called \emph{atomic}. 
\end{definition}

\begin{proof}
To prove Point~1, use the following observations:
\begin{itemize}
\item a semi-projective family~$T'$ of~$S$ gives rise to the semi-strong projector $P = \frac{1}{|T'|} \sum_{t \in T'}t$ over~$\QQ$, which is a strong projector if~$T'$ is projective; thus a projective and a distinct semi-projective families cannot co-exist (Lemma~\ref{L:proj});
\item if a family~$T'$ is semi-projective, than so is $\tau_a T'$, so the uniqueness in Option~C forces $\tau_a T' = T'$ for all $a \in S$, hence the projectivity of~$T'$.
\end{itemize}

Now, take a $P \in R T_S$, and regroup its summands as $P = \sum_{i \in I} \alpha_i P_i$, where~$I$ is a finite set, the~$\alpha_i$ are pairwise distinct non-zero elements of~$R$, $P_i = \sum_{t \in T_i}t$, and the~$T_i$ are pairwise disjoint sub-sets of~$T_S$. Property $P \tau_a = P$ is equivalent to the right multiplication by~$\tau_a$ permuting the elements of each~$T_i$. Property $\tau_b P = P$ is analyzed in a similar way. This yields Points~2 and~3.

Next, remark that
\begin{itemize}
\item the union, the intersection, and the set difference of two semi-projective families is a semi-projective family;
\item given two semi-projective families $T',T''$ and a $t'' \in T''$, one gets a semi-projective family $t''T'$ contained in~$T''$.
\end{itemize}
Together with standard finiteness arguments, these observations imply Point~4.
\end{proof}

Most examples above realize Option~C, except for the free monogenic shelf~$\FF$ and the non-trivial shelves from Example~\ref{Ex:1Retract}, which illustrate Option~A and Option~B respectively. For the latter, the atomic families are the $\{\tau_a\}$ for $a \in S$.

In Proposition~\ref{P:RetractVsProj}, we will see that Option~A is impossible for quasi-finite shelves.

\section{Cohomology of shelves admitting a retract}\label{S:Retract}  

In this section, we show how to reduce rack cohomology calculation for some shelves to that for their ``nice'' sub-shelves. In particular, for any \textit{finite} shelf~$S$ we exhibit a sub-shelf which is in fact a \textit{rack}, and whose cohomology (as well as other major characteristics) is the same as that of~$S$. For Laver tables and cyclic shelves, the trivial one-element rack and, respectively, cyclic racks do the job. We also explore the uniqueness question for ``nice'' sub-shelves, and relate it to the existence of (semi-) strong %
 projectors.

\begin{definition}
A \emph{sub-shelf} of a shelf $(S,\op)$ is a $\op$-closed sub-set of~$S$. A sub-shelf~$S'$ of $(S,\op)$ is called its \emph{retract} if there exists an element $t$~in its translation semigroup~$T_S$, called \emph{retraction}, such that
\begin{enumerate}
\item the action of~$t$ projects~$S$ to~$S'$: $t \op S = S'$;
\item restricted to~$S'$, the action of~$t$ is trivial: $t \op b = b$ for all $b \in S'$.
\end{enumerate}
If moreover~$\op$ restricts to a rack operation on~$S'$, we talk about a \emph{rack retract}.
\end{definition}  

One easily verifies

\begin{lemma}\label{L:tt}
A $t \in T_S$ is a retraction for some retract if and only if $tt=t$ in $T_S$.
\end{lemma}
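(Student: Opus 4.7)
The plan is to unfold both implications directly from the definition of retract, using only two basic facts: elements of~$T_S$ are shelf endomorphisms of~$S$ (since each generator~$\tau_a$ is one by self-distributivity~\eqref{E:SD}, and compositions inherit this), and the map $T_S \to \End(S)$ lets one detect equality in~$T_S$ by evaluating on all of~$S$.

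For the ``only if'' direction, I would assume $t$ is a retraction for some retract~$S'$, so that $t \op S = S'$ and $t \op b = b$ for all $b \in S'$. For an arbitrary $a \in S$, the element $t \op a$ lies in~$S'$ and is therefore fixed by~$t$, giving $(tt) \op a = t \op (t \op a) = t \op a$. Since $tt$ and $t$ coincide as functions $S \to S$, they are equal in~$T_S$.

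For the converse, I would set $S' := t \op S$ and verify the two axioms of a retract with retraction~$t$. The triviality condition is immediate from $tt=t$: any $b \in S'$ equals $t \op a$ for some $a \in S$, so $t \op b = (tt) \op a = t \op a = b$. To see that $S'$ is $\op$-closed (so that it qualifies as a sub-shelf in the first place), I would pick $b, b' \in S'$; using that $t$ is a shelf morphism together with the triviality just established yields
\[
t \op (b \op b') = (t \op b) \op (t \op b') = b \op b',
\]
so $b \op b' = t \op (b \op b') \in t \op S = S'$, which finishes the argument.

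The only mild subtlety is noticing that every $t \in T_S$ respects~$\op$; once this is in hand, both directions are purely formal. There is no real obstacle — this is essentially a book-keeping lemma clarifying what ``retraction'' means at the level of the translation semigroup, and it will later be the basis for constructing retracts as fixed-image sets of idempotents in~$T_S$.
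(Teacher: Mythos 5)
Your proof is correct and fills in exactly the routine verification the paper relegates to ``One easily verifies.'' The only point worth streamlining: since $T_S \subseteq \End(S)$, the image $S' = t \op S$ of a shelf endomorphism is automatically $\op$-closed, so your explicit closure computation, while valid, is not strictly needed.
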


\begin{example}\label{Ex:RackRetract}
For a rack $(S,\op)$, all $t \in T_S$ are invertible in $\End(S)$, so~$S$ admits no retracts unless $\Id_S \in T_S$, in which case~$S$ is a retract of itself with $t=\Id_S$. 
\end{example}

\begin{example}
The free monogenic shelf~$\FF$ (Example~\ref{Ex:Free}) has no retracts at all: relation $tt=t$ is impossible in the $\NN$-graded semigroup~$T_{\FF}$. 
\end{example}

Things get more interesting for quasi-finite non-rack shelves:

\begin{theorem}\label{T:RetractRack}
A quasi-finite shelf $(S,\op)$ admits rack retracts. Any $u \in T_S$ restricted to a rack retract of $(S,\op)$ sends it isomorphically\footnote{A \emph{rack (iso-)morphism} between two racks is a shelf (iso-)morphism between the underlying shelf structures.} onto a rack retract of $(S,\op)$. This defines a transitive action of~$T_S$ on the set of all rack retracts of $(S,\op)$.
\end{theorem}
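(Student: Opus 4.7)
My plan is to leverage the finiteness of~$T_S$ through the classical fact that every finite semigroup~$T$ contains an idempotent~$e$ with $eTe$ a group (with identity~$e$). A quick proof: choose~$e$ minimising $|eTe|$; for $s \in eTe$ (so $es = se = s$), let $f = s^n$ be an idempotent power. Then $ef = fe = f$, so $fTf \subseteq eTe$; moreover $e \in fTf$ forces $e = f$. Hence if $f \neq e$ one has $fTf \subsetneq eTe$, contradicting minimality; so $f = e$ and $s^{n-1}$ inverts~$s$ in~$eTe$.

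Given such an~$e$, set $S' := e \op S$. This is a sub-shelf (image of the shelf endomorphism~$e$), and Lemma~\ref{L:tt} makes it a retract by idempotence; for $b \in S'$, the element $e \tau_b e \in eT_Se$ is invertible with $(e \tau_b e)|_{S'} = \tau_b|_{S'}$, so $\tau_b|_{S'}$ is a bijection and $S'$ is a rack. Conversely, the retraction~$t$ of \emph{any} rack retract $S'$ satisfies $tT_St$ is a group: the restriction map $\varphi \colon tT_St \to \End(S')$ is injective since each $tst$ factors through $S'$, and from the shelf-endomorphism identity $f \tau_a = \tau_{f(a)} f$ one computes $ts = \tau_{t(a_1)}\cdots\tau_{t(a_n)}\, t$ for $s = \tau_{a_1}\cdots\tau_{a_n}$, so $(ts)|_{S'}$ is a composition of bijections of the rack~$S'$. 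Thus $\varphi(tT_St) \subseteq \Aut(S')$ is a finite sub-semigroup containing~$\Id$, hence a group.

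For any $u \in T_S$ and rack retract $S'$ with retraction~$t$, pick $u' \in T_S$ with $tu't$ the inverse of $tut$ in $tT_St$, so $tu'tut = t$. Then $tu't$ inverts $u|_{S'}$ (applied to $u(a)$ with $a \in S'$, it yields $a$), so $u|_{S'}$ is a shelf isomorphism onto $u(S')$; and $e'' := utu't \in T_S$ is easily checked to be idempotent with $e''(S) = u(S')$ and $e''|_{u(S')} = \Id$, exhibiting $u(S')$ as a rack retract. For transitivity, apply this to $u = e_2$ and rack retracts $S'_1, S'_2$ (retractions $e_1, e_2$): we obtain $e_2(S'_1) \subseteq S'_2$ as a rack retract shelf-isomorphic to $S'_1$. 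Symmetrically $e_1|_{S'_2}$ is injective, while $e_1 e_2|_{S'_1} = (e_1 e_2 e_1)|_{S'_1}$ is a bijection of $S'_1$ (lying in the group $e_1 T_S e_1$); so any hypothetical $y \in S'_2 \setminus e_2(S'_1)$ would admit $z \in S'_1$ with $e_1 e_2(z) = e_1(y)$, producing two distinct elements of $S'_2$ with the same $e_1$-image, a contradiction. The main obstacle is establishing this dictionary between group-type idempotents of~$T_S$ and rack retracts of~$S$; once it is in place, the explicit inverses and retractions fall out of routine semigroup computations.
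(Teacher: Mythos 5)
Your proof is correct, and it takes a genuinely different route from the paper. The paper constructs a rack retract by an iterative descent: starting from a non-invertible translation $\tau_a$, it stabilises $\tau_a^n$ to an idempotent, observes the resulting retract is strictly smaller, and repeats (using the ``retract of a retract is a retract'' lemma and finiteness of $T_S$); it then proves the isomorphism statements by explicit manipulation of products of translations, writing $t't'' = \tau_{t'\op a_1}\cdots\tau_{t'\op a_k}t'$. You instead import a structural fact -- every finite semigroup contains an idempotent $e$ with $eT_Se$ a group -- and get the rack retract $e\op S$ in one shot, with rack-ness falling out of the invertibility of $e\tau_b e$. You also prove the genuinely useful converse that $tT_St$ is a group for \emph{any} retraction $t$ of a rack retract (via the injective restriction map $tT_St \hookrightarrow \Aut(S')$), and from there both the $T_S$-equivariance and transitivity become exercises in the group $tT_St$ rather than bespoke computations. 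The paper's approach is more self-contained and elementary; yours is more modular, makes the dictionary between rack retracts and group-type idempotents of $T_S$ explicit, and connects the result to standard semigroup structure theory -- a perspective the paper does not spell out. I checked your key claims (the minimal-idempotent lemma, the restriction giving bijections, the idempotence of $e'' = utu't$, and the surjectivity argument in the transitivity step) and they all hold.
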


In particular, a finite shelf admits rack retracts which are all of the same size.

\begin{definition}
The rack retract of a quasi-finite shelf $(S,\op)$ (defined up to rack isomorphism) is called the \emph{rack type} of $(S,\op)$, and is denoted by $\MRR(S,\op)$.
\end{definition}

\begin{proof}
Start with an easy observation:

\begin{lemma}\label{L:RetractTrans}
Being a (rack) retract is a transitive relation, i.e., a (rack) retract of a (rack) retract of $(S,\op)$ is a (rack) retract of $(S,\op)$.
\end{lemma}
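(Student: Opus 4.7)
The plan is to compose the two retractions. Suppose $S'' \subseteq S'$ is a (rack) retract of $S'$ via some retraction $s \in T_{S'}$, and $S' \subseteq S$ is a (rack) retract of $S$ via some $t \in T_S$. The aim is to produce a single $u \in T_S$ satisfying $u \op S = S''$ and $u \op b = b$ for every $b \in S''$; the rack-retract case will then follow almost for free.

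The element $s$ lives a priori only in the translation semigroup of the smaller shelf $S'$, so my first step is to lift it to $T_S$. By definition, $s$ can be written as a finite product $\tau_{a_n}\cdots \tau_{a_1}$ with each $a_i \in S'$, the factors being interpreted as endomorphisms of $S'$. Since $S' \subseteq S$ is a sub-shelf, each $\tau_{a_i}$ is simultaneously a well-defined element of $T_S$, and the iterated application of these $\tau_{a_i}$ to any element of $S'$ stays in $S'$ at every intermediate step. Hence the same word, read inside $T_S$, defines a $\tilde{s} \in T_S$ whose restriction to $S'$ coincides with $s$. This lifting step, together with the verification that it is faithful on $S'$, is the principal point of the proof; everything afterwards reduces to a short composition check.

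With $\tilde{s}$ in hand, I set $u = \tilde{s}\, t \in T_S$, adopting the convention $(u_1 u_2) \op b = u_1 \op (u_2 \op b)$. The first defining property of a retraction is then the chain $u \op S = \tilde{s} \op (t \op S) = \tilde{s} \op S' = s \op S' = S''$, where I use successively the definition of $u$, the first property of $t$, the lifting identity, and the first property of $s$. For the second property, take $b \in S''$; since $S'' \subseteq S'$ one has $t \op b = b$, hence $u \op b = \tilde{s} \op b = s \op b = b$. This proves transitivity of the retract relation. Finally, the sub-shelf operation inherited by $S''$ is intrinsic to the subset $S'' \subseteq S$ and does not depend on whether one regards $S''$ as sitting inside $S'$ or inside the larger $S$; so if $\op$ restricts to a rack operation on $S''$ at the level of $S'$, it does so at the level of $S$ as well, and transitivity holds for rack retracts too.
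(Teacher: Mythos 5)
Your proof is correct. The paper actually states Lemma \ref{L:RetractTrans} without supplying a proof, merely labelling it ``an easy observation'' inside the proof of Theorem \ref{T:RetractRack}; so there is no written argument against which to compare. What you give is precisely the natural argument the paper leaves implicit: the only step that requires care is promoting the inner retraction $s \in T_{S'}$ to an element $\tilde{s} \in T_S$, and you handle it correctly by writing $s$ as a word $\tau_{a_n}\cdots\tau_{a_1}$ with all $a_i$ in the sub-shelf $S'$, noting that the same word read in $T_S$ restricts on $S'$ to $s$ because $S'$ is $\op$-closed. The two defining properties of a retraction for $u = \tilde{s}\,t$ then follow by the direct composition check you perform, and the rack case is indeed immediate because the induced operation on a subset does not depend on the ambient shelf. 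One small remark: you could shorten the lifting step by simply observing that the inclusion $S' \hookrightarrow S$ induces a semigroup map $T_{S'} \to T_S$ (sending each $\tau_a^{S'}$ to $\tau_a^{S}$) compatible with restriction, but your hands-on version is fine and, if anything, makes the point that restriction is faithful on $S'$ more transparent.
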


A quasi-finite rack is its own rack retract, with the retraction~$\Id_S$: indeed, $\Id_S$ can be presented as~$\tau_a^k$ for some~$k \in \NN$ and $a \in S$ because of the finiteness and the cancellation property of~$T_S$.

Now, suppose that $(S,\op)$ is not a rack, i.e., $\tau_a$ is not invertible for some $a \in S$. The quasi-finiteness of~$S$ implies $\tau_a^{k+s} = \tau_a^k$ for some $k,s \in \NN$, which yields $\tau_a^n\tau_a^n = \tau_a^n$ for a sufficiently large multiple~$n$ of~$s$. Thus $S' = \tau_a^n \op S$ is a retract of $(S,\op)$. It is strictly contained in~$S$: indeed, $S'=S$  would imply the surjectivity of $\tau_a \colon S \to S$, and 
$\tau_a^n\tau_a^n = \tau_a^n$ would then give its invertibility. Lemma~\ref{L:RetractTrans} allows an iteration of this argument. This yields a strictly decreasing sequence of retracts of~$S$, which has to be finite since all retracts have the form $t \op S$, with~$t$ belonging to the finite translation semigroup~$T_S$. Its last element is a rack retract of $(S,\op)$.

Note that we have just shown that for any $a \in S$, there is a retraction of the form $t\tau_a$ for some $t \in T_S$.

Next, take two rack retracts~$S'$ and~$S''$ of $(S,\op)$, with retractions~$t'$ and~$t''$ respectively. Present~$t''$ as $\tau_{a_1} \cdots \tau_{a_k}$  Put $t=t't''$, which can be written as
$$t=t't''= t'\tau_{a_1} \cdots \tau_{a_k} = \tau_{t' \op a_1} \cdots \tau_{t' \op a_k} t'.$$ 
This restricts to a bijection $S' \to S'$: indeed, $S' = t' \op S = t' \op S'$ is a rack, so all the $t' \op a_i \in S'$ yield bijective translations~$\tau_{t' \op a_i}$ on~$S'$. Thus~$t''$ restricts to a rack injection $S' \hookrightarrow S''$, and~$t'$ restricts to a rack surjection $S'' \twoheadrightarrow S'$. A symmetric argument shows that in fact~$t''$ yields a rack isomorphism $S' \overset{\sim}{\to} S''$.

Take now a rack retract~$S'$ of $(S,\op)$ with retraction~$t'$ and an $a \in S$. $S'' = a \op S'$ is a sub-rack of $(S,\op)$\footnote{A \emph{sub-rack} of a shelf $(S,\op)$ is its sub-shelf on which~$\op$ defines a rack structure.}. Above we showed the existence of a retraction of the form $t\tau_a$ and its injectivity on~$S'$; thus~$\tau_a$ restricts to a rack isomorphism $S' \overset{\sim}{\to} S''$. It remains to show that~$S''$ is a retract. Put $t'' = \tau_a t'$. The quasi-finiteness of $(S,\op)$ guarantees that $(t'')^n (t'')^n = (t'')^n$ for some $n \ge 1$. Writing~$t'$ as $\tau_{a_1} \cdots \tau_{a_k}$, one gets $(t'')^n = (\tau_{t'' \op a}\tau_{t'' \op a_1} \cdots \tau_{t'' \op a_k})^{n-1}t''$. Since $S'' = t'' \op S$ is a rack, $\tau_{t'' \op a}\tau_{t'' \op a_1} \cdots \tau_{t'' \op a_k}$ restricts to a bijection $S'' \to S''$. Consequently, $(t'')^n$ is a retraction to $(t'')^n \op S = S''$.
\end{proof}

The following theorem relates the cohomologies of a shelf and of its retract.

\begin{theorem}\label{T:RetractHom}
Let~$S'$ be a retract of a shelf $(S,\op)$. Take an $R$-module~$A$.
\begin{enumerate}
\item The cohomology $R$-modules $H^\bullet(S,A)$ and $H^\bullet(S',A)$ are isomorphic.
\item For {invariant} cohomology one can say more: $C^\bullet_{inv}(S,A)$ and $C^\bullet_{inv}(S',A)$ are isomorphic cochain complexes.
\end{enumerate}
In both situations, the isomorphisms are realized by the maps induced by the retraction $S \twoheadrightarrow S'$ and the shelf inclusion $S'\hookrightarrow S$.
\end{theorem}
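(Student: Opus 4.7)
My plan is to construct explicit cochain maps induced by the retraction and the inclusion, show that they are mutually inverse on cohomology, and then upgrade this to an honest cochain isomorphism on the invariant subcomplexes. Write $t \in T_S$ for the retracting element, $\rho\colon S \twoheadrightarrow S'$, $a \mapsto t \op a$, for the retraction map, and $i\colon S' \hookrightarrow S$ for the inclusion. Since $t$ is a word in left translations and each $\tau_a$ is a shelf morphism by~\eqref{E:SD}, $\rho$ is also a shelf morphism. So both $\rho$ and $i$ induce cochain maps $\rho^*\colon C^\bullet(S',A) \to C^\bullet(S,A)$ and $i^*\colon C^\bullet(S,A) \to C^\bullet(S',A)$ in the usual way (precomposition by $\rho^{\times k}$ and restriction).

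The easy direction is $\rho \circ i = \Id_{S'}$, which follows at once from $t \op b = b$ for $b \in S'$ and yields $i^* \circ \rho^* = \Id_{C^\bullet(S',A)}$. For the other composition a direct unwinding gives $\rho^* \circ i^*(\phi) = \phi \cdot t$ for every $\phi \in C^k(S,A)$. Since Lemma~\ref{L:CohomProperties}(2) says that the action of $RT_S$ on $H^\bullet(S,A)$ is trivial, this composition acts as the identity on cohomology; an explicit chain homotopy is obtained by iterating Lemma~\ref{L:CohomPropertiesPre}(2) along a decomposition $t = \tau_{a_1} \cdots \tau_{a_r}$. Combined with the easy direction, this proves Point~1.

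For Point~2 I would check that $i^*$ and $\rho^*$ restrict to the invariant subcomplexes. For $i^*$ this is immediate since $S'$ is a sub-shelf. For $\rho^*$ the key input is the shelf-morphism identity $t \op (a \op b) = (t \op a) \op (t \op b)$, which, combined with $t \op a \in S'$ and the $S'$-invariance of $\psi \in C^k_{inv}(S',A)$, yields $\rho^*\psi \cdot \tau_a = \rho^*\psi$ for every $a \in S$. Finally, on $C^\bullet_{inv}(S,A)$ the element $t$ acts by the scalar $\varepsilon(t) = 1$, so $\rho^* \circ i^* = \Id$ there on the nose, and the restricted maps are mutually inverse cochain isomorphisms.

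The one non-formal step will be showing that $\rho^*$ preserves invariance: the whole weight of the argument rests on the self-distributivity of $S$, in the guise of the shelf-morphism property of $t$, which is what allows invariance on the $S'$ side to be promoted to invariance on the $S$ side.
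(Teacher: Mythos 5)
Your proof is correct and follows essentially the same route as the paper's: factor $i\circ\rho$ as the action of the retraction $t\in T_S$ on cochains, invoke the triviality of the $RT_S$-action on cohomology (Lemma~\ref{L:CohomProperties}(2)) for Point~1, and observe that $t$ acts as $\varepsilon(t)=1$ on $C^\bullet_{inv}(S,A)$ for the on-the-nose isomorphism in Point~2. The only presentational difference is that you explicitly verify $\rho^*$ preserves invariance, whereas the paper packages this into the general statement that any shelf morphism restricts to invariant subcomplexes.
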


\begin{proof}
A retraction~$t$ for~$S'$ induces a map $\tau \colon S \twoheadrightarrow S'$. The properties of~$t$ allow one to construct a commutative diagram of shelf morphisms
$$\xymatrix@!0 @R=1cm @C=2cm{
& S \ar@{->>}[dr]^{\tau} \ar[rr]^{t} & & S \\
S'\ar@{^{(}->}[ur]^i \ar[rr]^{\Id} & & S'\ar@{^{(}->}[ur]^i &  }$$
(here~$i$ is the shelf inclusion). A shelf morphism $f \colon S_1 \to S_2$ induces a morphism of complexes $f_* \colon C^\bullet(S_2,A) \to C^\bullet(S_1,A), \, \phi \mapsto \phi \circ f^{\times \bullet}$, which
\begin{itemize}
\item restricts to invariant sub-complexes, yielding $f_* \colon C_{inv}^\bullet(S_2,A) \to C_{inv}^\bullet(S_1,A)$, 
\item and induces a morphism in cohomology, $f_*^\# \colon H^\bullet(S_2,A) \to H^\bullet(S_1,A)$.
\end{itemize}
 One thus obtains a commutative diagram of $R$-module morphisms 
$$\xymatrix@!0 @R=1cm @C=3cm{
& H^\bullet(S,A) \ar[dl]_{i_*^\#} & & H^\bullet(S,A) \ar[ll]_{t_*^\# = \Id} \ar[dl]_{i_*^\#} \\
H^\bullet(S',A) & & H^\bullet(S',A) \ar[ul]_{\tau_*^\#} \ar[ll]_{\Id} & }$$
Observe that $t_*^\# = \Id$ since $t \in T_S$ induces the trivial action in cohomology (Lemma~\ref{L:CohomProperties}). Thus~$i_*^\#$ and~$\tau_*^\#$ are mutually inverse $R$-module isomorphisms.

Let us now turn to the commutative diagram of invariant complexes
$$\xymatrix@!0 @R=1cm @C=3cm{
& C_{inv}^\bullet(S,A) \ar[dl]_{i_*} & & C_{inv}^\bullet(S,A) \ar[ll]_{t_* = \Id} \ar[dl]_{i_*} \\
C_{inv}^\bullet(S',A) & & C_{inv}^\bullet(S',A) \ar[ul]_{\tau_*} \ar[ll]_{\Id} & }$$
Here $t_* = \Id$ because of the invariance. Hence the restriction map~$i_*$ gives an isomorphism of complexes $C_{inv}^\bullet(S,A) \overset{\sim}{\to} C_{inv}^\bullet(S',A)$, and~$\tau_*$ is its inverse. 
\end{proof}

In Theorem~\ref{T:CohomComput} we saw that the cohomology of a shelf can often be described in terms of its orbits. It is thus important to understand how a retraction behaves on orbits.

\begin{proposition}\label{P:RetractOrbits}
Let~$S'$ be a retract of a shelf $(S,\op)$. The pre-image of any orbit of~$S'$ is an orbit of~$S$.
\end{proposition}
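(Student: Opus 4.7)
\bigskip

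My plan is to exploit the retraction $t \in T_S$ (with $t\op S = S'$ and $t\op b = b$ for $b \in S'$) via the induced map $\tau \colon S \to S'$, $a \mapsto t\op a$. The two properties I want to establish first are: (i) $\tau$ is a shelf morphism, and (ii) every $a \in S$ satisfies $a \sim_S \tau(a)$. For (i), each single translation $\tau_c$ is a shelf endomorphism by the self-distributivity \eqref{E:SD}, so any product of translations, and in particular $t$, is a shelf endomorphism of $S$; its image lies in $S'$ by definition. For (ii), write $t = \tau_{c_1}\cdots\tau_{c_k}$ so that $\tau(a) = c_1\op(c_2 \op\cdots(c_k \op a))$; each elementary step $c_i\op x \sim_S x$ is exactly the defining relation for the orbit equivalence on $S$, and iterating these gives $\tau(a) \sim_S a$.

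Now fix an orbit $\OO'$ of $S'$. Because $S' \hookrightarrow S$ is a shelf inclusion, each elementary relation $c\op x \sim_{S'} x$ on $S'$ is also a relation in $\sim_S$, so $\OO'$ sits inside a single orbit $\OO$ of~$S$. I claim $\tau^{-1}(\OO') = \OO$, which is precisely the statement of the proposition.

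For the inclusion $\tau^{-1}(\OO') \subseteq \OO$, take $a \in S$ with $\tau(a) \in \OO'$. By (ii), $a \sim_S \tau(a)$, and since $\tau(a) \in \OO' \subseteq \OO$, we conclude $a \in \OO$. For the reverse inclusion $\OO \subseteq \tau^{-1}(\OO')$, take any $a \in \OO$ and fix some $a' \in \OO'$; then $a \sim_S a'$. Applying the shelf morphism $\tau$ from (i) to a chain of elementary relations connecting $a$ and $a'$ produces a chain in $S'$ (each relation $c\op x \sim_S x$ in $S$ maps to $\tau(c)\op\tau(x) \sim_{S'} \tau(x)$, valid because $\tau(c) \in S'$), so $\tau(a) \sim_{S'} \tau(a') = a'$, where the last equality uses that $a' \in S'$ is fixed by $t$. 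Hence $\tau(a) \in \OO'$.

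The only step that requires genuine care is verifying that $\tau$ carries the $\sim_S$-equivalence into $\sim_{S'}$-equivalence, and this is where the hypothesis $\tau(S) \subseteq S'$ is essential: without it one would only obtain $\sim_S$-relations among the images, not $\sim_{S'}$-relations, and the pre-image of $\OO'$ could a priori split into several orbits of $S$. Everything else is a routine combination of the two preliminary observations.
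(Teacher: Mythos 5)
Your proof is correct and follows essentially the same route as the paper: both hinge on the two observations that $t$ acts as a shelf morphism (so pushes $\sim_S$-chains into $\sim_{S'}$-chains in $S'$) and that $t\op a \sim_S a$ for every $a$, and both conclude that $a \sim_S b$ if and only if $t\op a \sim_{S'} t\op b$. The paper merely states this more tersely, without unpacking the chain argument you spell out.
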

In particular, a retraction induces a bijection of orbits.

\begin{proof}
Let~$t$ be a retraction for~$S'$. Relation $a = c \op b$ in~$S$ implies $t \op a = (t \op c) \op (t \op b)$ in~$S'$, so~$t$ sends all elements from an orbit of~$S$ to the same orbit of~$S'$. On the other hand, given $a,b \in S$, relation $t \op a {\sim_{S'}} \, t \op b$ implies $a {\sim_{S}}\, t \op a {\sim_{S}}\, t \op b {\sim_{S}}\, b$. Summarizing, one sees that two elements lie in the same orbit of~$S$ if and only if~$t$ sends them to the same orbit of~$S'$.
\end{proof}

Combining Theorems~\ref{T:CohomComput}-\ref{T:RetractHom}, Proposition~\ref{P:RetractOrbits}, and Example~\ref{Ex:Rack}, one gets
\begin{theorem}\label{T:FinShelfHom} 
Let $(S,\op)$ be a quasi-finite shelf, and let~$A$ be an $R$-module. Suppose that the size $|T_{\MRR(S)}|$ of the translation semigroup of the rack type of~$S$ in invertible in~$R$. Then for all $k \ge 0$, one has the following morphism of $R$-modules:
\begin{align*}
H^k(S,A) \cong &\, A^{\Orb(S)^{\times k}}  \cong A^{\Orb(\MRR(S))^{\times k}},\\
[\phi \circ pr^{\times k}] \mapsfrom & \; \phi,
\end{align*}
where the projection $pr \colon S \twoheadrightarrow \Orb(S)$ sends an element of~$S$ to its orbit.
\end{theorem}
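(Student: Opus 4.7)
The plan is to chain together the results already established in Sections~\ref{S:Proj} and~\ref{S:Retract}. First, by Theorem~\ref{T:RetractRack}, the quasi-finiteness of $(S,\op)$ guarantees the existence of a rack retract $\MRR(S)$, unique up to rack isomorphism. Its translation semigroup $T_{\MRR(S)}$ sits inside the finite semigroup $T_S$, hence is finite; being moreover the translation semigroup of a rack, it is actually a group (cf.~Example~\ref{Ex:Rack}).

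Next, I would apply Theorem~\ref{T:RetractHom} to the pair $\MRR(S) \subseteq S$, yielding the cohomological isomorphism
$$H^k(S,A) \;\cong\; H^k(\MRR(S),A),$$
realized by the maps induced by any retraction $t\colon S \twoheadrightarrow \MRR(S)$ and the shelf inclusion $\MRR(S) \hookrightarrow S$. To compute the right-hand side, I invoke Example~\ref{Ex:Rack}: since $|T_{\MRR(S)}|$ is invertible in~$R$, the averaging element $P = \frac{1}{|T_{\MRR(S)}|}\sum_{t \in T_{\MRR(S)}} t$ is a strong projector in $R\, T_{\MRR(S)}$. Theorem~\ref{T:CohomComput} then gives
$$H^k(\MRR(S),A) \;\cong\; A^{\Orb(\MRR(S))^{\times k}},$$
with $\phi$ sent to the class of $\phi \circ pr_{\MRR(S)}^{\times k}$, where $pr_{\MRR(S)}\colon \MRR(S) \twoheadrightarrow \Orb(\MRR(S))$ is the orbit projection. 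Finally, Proposition~\ref{P:RetractOrbits} provides a bijection $\Orb(S) \cong \Orb(\MRR(S))$, inducing the second claimed identification $A^{\Orb(S)^{\times k}} \cong A^{\Orb(\MRR(S))^{\times k}}$.

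The only remaining verification — and the closest thing to an obstacle — is to match the composite isomorphism with the explicit formula $\phi \mapsto [\phi \circ pr^{\times k}]$ announced in the theorem. This is a diagram chase: starting with $\phi \in A^{\Orb(S)^{\times k}}$, one transports it to a map $\overline{\phi}$ on $\Orb(\MRR(S))^{\times k}$ via the orbit bijection of Proposition~\ref{P:RetractOrbits}, lifts to the cochain $\overline{\phi} \circ pr_{\MRR(S)}^{\times k}$ on $\MRR(S)$, and then pushes to $S$ along the inclusion $\MRR(S) \hookrightarrow S$. Because the orbit bijection is set up precisely so that $pr$ on $S$ factors (up to this identification) as $pr_{\MRR(S)} \circ t$, and because $t$ acts trivially in cohomology (Lemma~\ref{L:CohomProperties}), the resulting class coincides with that of $\phi \circ pr^{\times k}$. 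This finishes the proof.
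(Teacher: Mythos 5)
Your proposal is correct and follows precisely the route the paper itself takes: the result is stated there as an immediate combination of Theorems~\ref{T:CohomComput}--\ref{T:RetractHom}, Proposition~\ref{P:RetractOrbits}, and Example~\ref{Ex:Rack}, which is exactly the chain you assemble. One small terminological slip: transferring a cochain from $\MRR(S)$ to $S$ is a pullback along the retraction map $\tau\colon S\twoheadrightarrow\MRR(S)$, not a push along the inclusion, though since $pr_{\MRR(S)}\circ\tau = pr$ under the orbit identification your conclusion is unaffected.
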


In particular, the rational cohomology $\QQ$-modules $H^\bullet(S,\QQ)$ of a quasi-finite shelf are freely generated by the classes of the orbit indicator functions.

\begin{example}\label{Ex:ERetract}
Recall the shelves $E_{n,\rho,\mu}$ and $E^*_{n,\rho,\mu}$ from Example~\ref{Ex:E}. The sub-set 
$$S'=\{(2^n,\rho(2^n)), (2^n,\rho(2^n)+1),\ldots,(2^n,\rho(2^n)+\mu(2^n)-1)\}$$
is a sub-shelf isomorphic to the cyclic rack $C_{0,\mu(2^n)}$ both for~$E$ and~$E^*$. Even better: $S'$ is a rack retract of our shelves, with the retraction  $t = \tau_{(2^n-1,0)}^{\mu(2^n)s}$ for a sufficiently large~$s$. The central property~\eqref{E:LT2} of~$2^n$ implies that~$S'$ is stable by~$T_S$; Theorem~\ref{T:RetractRack} then yields the uniqueness of rack retract for $E^{(*)}_{n,\rho,\mu}$. The rack type of $E^{(*)}_{n,\rho,\mu}$ is thus $C_{0,\mu(2^n)}$. This reduces the cohomology study of the extremely rich family~$E^{(*)}$ to that of cyclic racks. Theorem~\ref{T:FinShelfHom} describes $H^k(E^{(*)}_{n,\rho,\mu},A) \cong A$ as containing the classes of the constant maps only for any module~$A$ over a commutative ring~$R$ containing~$\frac{1}{\mu(2^n)}$. In particular, for the Laver table~$A_n$ this description is valid for any~$R$, since in this case $\mu(2^n)=1$. In fact, in Section~\ref{S:Cyclic} we will show that any ring~$R$ can also be taken for cyclic racks -- and hence for all the~$E^{(*)}$.
\end{example}

\begin{example}\label{Ex:1Retract'}
Recall the shelf $(S,a \op b = a)$ from Example~\ref{Ex:1Retract}. Its translation semigroup is $T_{S}=\{\,\tau_a\,|\,a \in S \,\}$, and any of its elements~$\tau_a$ turns out to be a retraction to the rack retract $\{a\}$. In this case rack retracts are very far from being unique. The rack type of~$S$ is the trivial one-element rack. Theorem~\ref{T:FinShelfHom} then describes $H^k(S,A) \cong A$ as containing the classes of the constant maps only, for any commutative ring~$R$ and any $R$-module~$A$. Note that Theorem~\ref{T:CohomComput} was not sufficient for treating this shelf, since it admits no strong projectors. 
\end{example}

We now explore how rack retracts are related to (semi-)projective families -- and thus to (semi-)strong projectors (Proposition~\ref{P:ProjectorAverage}).

\begin{definition}
An \emph{ideal} of a shelf $(S,\op)$ is its sub-set~$S'$ which is $T_S$-stable, i.e., $a \op S' \subseteq S'$ holds for all $a \in S$. It is called a \emph{rack-ideal} if moreover all $\tau_a$, $a \in S$ induce bijections $S' \to S'$. A \emph{rack-ideal retract} is a rack-ideal which is also a retract.
\end{definition} 

A normal subgroup of a group is an example of a rack-ideal of its conjugation shelf. The rack retracts from Example~\ref{Ex:ERetract} are rack-ideal retracts, while those from Example~\ref{Ex:1Retract'} are not (unless the shelf consists of one element only).

The following elementary properties of (rack-)ideals can be easily verified:

\begin{lemma}\label{L:RackIdeals}
\begin{enumerate}
\item An ideal of a shelf is its sub-shelf, a rack-ideal is a sub-rack, and a rack-ideal retract is a rack retract.
\item The intersection or union of any number of ideals is an ideal.
\item The disjoint union of any number of rack-ideals is a rack-ideal.
\item A rack-ideal is contained in any retract of the shelf.
\end{enumerate}
\end{lemma}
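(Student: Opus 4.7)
The plan is to verify each point by directly unpacking definitions; nothing more delicate than the identity $\tau_a \tau_b = \tau_{a \op b}$ (which follows from self-distributivity) and the presentation of a retraction as a product of translations will be needed.

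For Point~$1$, if $S'$ is an ideal, then in particular $a \op S' \subseteq S'$ for every $a \in S'$, so $S'$ is $\op$-closed, hence a sub-shelf; the rack-ideal and rack-ideal retract cases just additionally record that the $\tau_a$, $a \in S$, already restrict to bijections on $S'$, a fortiori for $a \in S'$. For Point~$2$, I would pick $a \in S$, $x$ in the intersection (resp.\ union) of ideals $S'_\alpha$, and note that $a \op x$ lies in each $S'_\alpha$ (resp.\ in the particular $S'_\alpha$ containing $x$), hence in the intersection (resp.\ union). For Point~$3$, given pairwise disjoint rack-ideals $S'_\alpha$ and $a \in S$, each $\tau_a$ restricts to a bijection of every $S'_\alpha$; bijectivity of $\tau_a$ on $\bigsqcup_\alpha S'_\alpha$ then follows piece by piece, using the disjointness of both the sources and the targets to rule out any non-injective collision across different components.

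The only point requiring a bit more thought is Point~$4$, and this is where I expect the main (though still mild) obstacle: one must see why a retraction, which is merely an element of the translation semigroup, necessarily fixes every rack-ideal setwise. Let $S'$ be a rack-ideal and $S''$ a retract with retraction $t \in T_S$. Writing $t = \tau_{a_1} \cdots \tau_{a_k}$ with each $a_i \in S$, each factor $\tau_{a_i}$ restricts to a bijection of $S'$ by the rack-ideal hypothesis, so $t$ itself restricts to a bijection of $S'$; in particular $t \op S' = S'$. Combined with $t \op S = S''$, this yields $S' = t \op S' \subseteq t \op S = S''$, as desired.

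Altogether these four verifications are essentially bookkeeping from the definitions, and no induction, structural theorem, or appeal to earlier results in the section is needed beyond the characterization of retractions as products of translations used in Point~$4$.
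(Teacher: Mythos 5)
Your argument is correct, and it is the natural unpacking of the definitions; the paper itself gives no proof (the lemma is stated as ``easily verified''), so there is nothing to contrast with. One small correction to your preamble: the identity you quote, $\tau_a\tau_b = \tau_{a\op b}$, is false in a general shelf. Self-distributivity gives $\tau_a\tau_b = \tau_{a\op b}\tau_a$, not $\tau_a\tau_b = \tau_{a\op b}$. This is harmless here because none of your four verifications actually invokes it: Points~1--3 are pure set-theoretic bookkeeping, and Point~4 uses only the factorization $t = \tau_{a_1}\cdots\tau_{a_k}$ (which holds because $t$ lies in the semigroup generated by the translations) together with the fact that a composite of bijections of $S'$ is a bijection of $S'$, not the shelf identity. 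You should simply delete or correct that parenthetical so that the stated toolbox matches what the proof uses.
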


\begin{proposition}\label{P:RetractVsProj}
Let $(S,\op)$ be a quasi-finite shelf. Then it has a semi-projective family. Moreover, the following assertions are equivalent:
\begin{enumerate}
\item $S$ has a projective family;
\item $S$ admits a rack-ideal retract;
\item $S$ admits a unique rack retract.
\end{enumerate}
\end{proposition}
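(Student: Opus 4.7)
The plan is to split the proof into the existence of a semi-projective family and the equivalences $(1)\Leftrightarrow(2)\Leftrightarrow(3)$. The key tool for existence is the twisted commutation
\[
\tau_b\, u \,\in\, T_S\, \tau_b \qquad \text{for all } b\in S,\ u\in T_S,
\]
which follows by a straightforward induction on the word length of~$u$ from the self-distributivity identity $\tau_a\tau_b = \tau_{a\op b}\tau_a$. Since $T_S$ is finite, it admits a minimal right ideal~$R$; then $R\tau_b\subseteq R$ holds automatically, and one computes $(R\tau_b)\,T_S = R(\tau_b\,T_S) \subseteq R\,T_S\,\tau_b \subseteq R\,\tau_b$, showing that $R\tau_b$ is itself a non-empty right ideal contained in~$R$, so minimality forces $R\tau_b = R$. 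Hence $R$ is a semi-projective family.

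For $(2)\Leftrightarrow(3)$, I would invoke Theorem~\ref{T:RetractRack}: the $T_S$-action on rack retracts is transitive and each $\tau_a$ sends a rack retract~$S'$ isomorphically onto $\tau_a(S')$, so $S'$ is $T_S$-stable (equivalently, a rack-ideal retract) if and only if its $T_S$-orbit reduces to $\{S'\}$, which is the same as uniqueness of the rack retract. For $(2)\Rightarrow(1)$, given a rack-ideal retract~$S'$ with retraction~$t$, the plan is to consider $T'=T_S\cdot t$ as a set of functions $S\to S$ and identify it, via $u\,t\mapsto u|_{S'}$, with the finite set $G=\{u|_{S'}:u\in T_S\}$; the rack-ideal hypothesis makes every $\tau_a|_{S'}$ a bijection of~$S'$, so $G$ is a finite sub-semigroup of bijections of~$S'$, hence a group. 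Writing $u\,t\cdot\tau_a = (u\,\tau_{t(a)})\,t$ (from the relation $t\tau_a = \tau_{t(a)}t$) and $\tau_a\cdot u\,t = (\tau_a u)\,t$, one recasts $T'\tau_a$ and $\tau_a T'$ as right and left translation on~$G$ by the group elements $\tau_{t(a)}|_{S'}$ and $\tau_a|_{S'}$, which proves projectivity.

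For $(1)\Rightarrow(2)$, the plan is: given a projective family~$T'$, extract an idempotent $e\in T'$ from finiteness; projectivity yields $T'e = T' = eT'$, from which one checks that $e$ acts as a two-sided identity on~$T'$ and that every element of~$T'$ admits a two-sided inverse, so $T'$ becomes a finite group. Lemma~\ref{L:tt} makes $e$ a retraction to $S'=e\op S$; the ideal property $\tau_a(S')\subseteq S'$ follows from $\tau_a e\in\tau_a T'=T'$ together with the fact that every element of~$T'$ factors as $e\cdot v$ (so its image lies in~$e\op S = S'$), while bijectivity of $\tau_a|_{S'}$ comes from restricting the group inverse of $\tau_a e\in T'$ to~$S'$. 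The main obstacle I foresee is this last direction, where one must carefully translate semigroup identities inside~$T_S$ into properties of the action on~$S$; however, once $T'$ is recognized as a group with identity~$e$, the rack-ideal properties drop out without further effort.
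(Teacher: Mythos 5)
Your proof is correct. The genuine departure from the paper is in the existence part: you observe the twisted containment $\tau_b\, T_S \subseteq T_S\, \tau_b$ (an immediate consequence of $\tau_b\tau_a = \tau_{b\op a}\tau_b$), take a minimal nonempty right ideal $R$ of the finite semigroup $T_S$, note that $R\tau_b$ is again a nonempty right ideal contained in~$R$, and conclude $R\tau_b = R$ by minimality. The paper instead first builds a rack retract $S'$ with retraction~$t$ via Theorem~\ref{T:RetractRack} and shows that $T_S\,t$ is semi-projective by cancelling translations inside the rack~$S'$; the benefit there is that this particular family is then reused verbatim in the equivalences. Your ideal-theoretic argument is shorter, exposes the existence of semi-projective families as a pure semigroup fact independent of the retract machinery, and would read naturally even to a reader who skipped Section~\ref{S:Retract}. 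For the equivalences your route is close to the paper's: $(2)\Leftrightarrow(3)$ is the same transitivity argument from Theorem~\ref{T:RetractRack}; for $(2)\Rightarrow(1)$ you identify $T_S t$ with a finite group of bijections of~$S'$ and read $T'\tau_a$ and $\tau_a T'$ as right and left group translations, which the paper does implicitly by cancellation without naming the group; for $(1)\Rightarrow(2)$ you extract an idempotent $e\in T'$, deduce that $T'$ is a group with identity~$e$, and obtain the rack-ideal retract $e\op S$ directly, whereas the paper reasons orbit by orbit (showing each $T'\op b$ is a rack-ideal via Lemma~\ref{L:proj} and then exhibiting a retraction of the form $t^n$). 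Both are valid; the group-theoretic framing is a bit tidier, the orbitwise argument a bit more elementary.
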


\begin{proof}
Theorem~\ref{T:RetractRack} guarantees the existence of a rack retract~$S'$ of $(S,\op)$, with a retraction~$t$. We now show that the finite family $T_St$ is semi-projective, i.e., $T_St \tau_a = T_St$ for all $a \in S$. Take $u,v \in T_S$ such that $u t \tau_a = v t \tau_a$. This implies $u \tau_{t \op a} t = v \tau_{t \op a} t$, that is,  $u \tau_{t \op a} = v \tau_{t \op a}$ on $t \op S = S'$. Since~$S'$ is a rack, $\tau_{t \op a}$ restricts to a bijection $S' \to S'$. Hence~$u$ and~$v$ coincide on $S' = t \op S$, yielding   
$u t = v t$. Thus $ut \mapsto u t \tau_a$ is an injection $T_St \hookrightarrow T_St \tau_a$, and hence a bijection by finiteness.

Again according to Theorem~\ref{T:RetractRack}, the uniqueness of the rack retract~$S'$ is equivalent to all the $\tau_a$, $a \in S$ inducing rack automorphisms of~$S'$, which is precisely the definition of a rack-ideal retract. In this case the family $T_St$ is projective. Indeed, relation $\tau_a u t = \tau_a v t$ for $u,v \in T_S$, $a \in S$ implies $\tau_a u = \tau_a v$ on $t \op S = S'$, and thus $u=v$ on~$S'$ since $u$, $v$, and $\tau_a$ all restrict to automorphisms of~$S'$; this shows the bijectivity of the maps $ut \mapsto\tau_a u t $ from~$T_St$ to~$\tau_a T_St$.

Now suppose that $T' \subseteq T_S$ is a projective family for~$S$. For every orbit~$\OO$ of~$S$, there exists a finite set~$S_{\OO} \subseteq \OO$ such that, for all $b \in \OO$, one has $T' \op b = S_{\OO}$  (Lemma~\ref{L:proj}). Property $\tau_a T' = T'$ implies that all the $\tau_a$ act on~$S_{\OO}$ by permutations. Thus~$S_{\OO}$ is a rack-ideal, and so is the disjoint union $S' = \bigsqcup_{\OO \in \Orb(S)} S_{\OO} = T' \op S$ (Lemma~\ref{L:RackIdeals}). It remains to show that~$S'$ is a retract. Take a $t \in T'$. As usual, the finiteness of~$T_S$ yields $t^nt^n = t^n$ for some $n \in \NN$. Further, $t^n \in T'$ implies $t^n \op S \subseteq T' \op S = S'$. One the other hand, the rack-ideal~$S'$ satisfies $t^n \op S' = S'$. Thus~$t^n$ is a retraction to $t^n \op S = S'$.
\end{proof}

We finish this section by establishing that a certain reduction procedure for shelves respects retracts. This will be used in Section~\ref{S:Cyclic} to reduce the cohomology study of all finite monogenic shelves to that of cyclic racks.

\begin{definition}
For a shelf $(S,\op)$, consider the equivalence relation
$$ (\, a \approx b \,)\qquad \Longleftrightarrow \qquad (\, \forall c \in S, c \op a = c \op b \,).$$
The quotient $\overline{S} = S / \approx$ is called the \emph{reduced} of $(S,\op)$, and the projection $r:S \twoheadrightarrow \overline{S}$ is called the \emph{reduction} of $(S,\op)$.
\end{definition}

%One easily verifies
\begin{lemma}
Given a shelf $(S,\op)$, the operation~$\op$ induces a shelf operation on~$\overline{S}$, rendering the reduction~$r$ a shelf morphism.
\end{lemma}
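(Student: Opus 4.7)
The plan is to verify three things, in order: (i) that the relation $\approx$ is a congruence for $\op$, so the operation descends to $\overline{S}$; (ii) that the induced operation on $\overline{S}$ satisfies self-distributivity; and (iii) that the projection $r$ is a shelf morphism. Only (i) requires a real argument; (ii) and (iii) are formal consequences.

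For (i), I would take $a \approx a'$ and $b \approx b'$ in $S$ and show $a \op b \approx a' \op b'$. Fix an arbitrary $c \in S$. The key trick is to expand via self-distributivity \eqref{E:SD}:
\begin{equation*}
c \op (a \op b) \;=\; (c \op a) \op (c \op b).
\end{equation*}
By hypothesis $c \op a = c \op a'$ and $c \op b = c \op b'$, so the right-hand side equals $(c \op a') \op (c \op b')$, which by self-distributivity again equals $c \op (a' \op b')$. Since $c$ was arbitrary, $a \op b \approx a' \op b'$, and the operation $[a] \op [b] := [a \op b]$ on $\overline{S}$ is well-defined.

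For (ii), self-distributivity on $\overline{S}$ is immediate: both sides of the identity $[a] \op ([b] \op [c]) = ([a] \op [b]) \op ([a] \op [c])$ are, by construction, the equivalence classes of the two sides of \eqref{E:SD} in $S$, which already coincide. For (iii), $r(a \op b) = [a \op b] = [a] \op [b] = r(a) \op r(b)$ by the very definition of the induced operation, so $r$ is a shelf morphism.

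The only step that could be called a real obstacle is verifying the congruence property in (i), and even there the self-distributivity axiom provides exactly what is needed to reduce the problem to the defining clause of $\approx$. The argument does not use any rack or finiteness hypothesis, so it applies to arbitrary shelves.
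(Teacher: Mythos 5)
Your proof is correct and is precisely the routine verification the paper leaves implicit (the lemma is stated without proof): the self-distributivity axiom makes $\approx$ a congruence in one line, and the rest is formal. The only cosmetic remark is that one could phrase step (i) as two half-steps ($a \op b \approx a' \op b$ and $a' \op b \approx a' \op b'$), but your single-step argument via $c \op (a \op b) = (c\op a)\op(c\op b) = (c\op a')\op(c\op b') = c\op(a'\op b')$ is cleaner and equally valid.
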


We keep the notation~$\op$ for the induced operation on~$\overline{S}$.

\begin{proposition}\label{P:ReducRetract}
Let~$S'$ be a retract of a shelf $(S,\op)$. Then the reduced~$\overline{S}$ of~$S$ admits a retract isomorphic (as a shelf) to~$S'$. Conversely, any retract of~$\overline{S}$ is isomorphic to a retract of~$S$. 
\end{proposition}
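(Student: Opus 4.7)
The plan is to transport the retraction structure between $S$ and $\overline{S}$ via the semigroup map $r_* \colon T_S \twoheadrightarrow T_{\overline{S}}$ induced by the reduction $r$. Since $r$ is a shelf morphism, $r_*$ is well-defined by $\tau_a \mapsto \tau_{r(a)}$, surjective, and satisfies the equivariance $r(t \op a) = r_*(t) \op r(a)$. I would first record the useful fact that if $b \approx b'$ then $u \op b = u \op b'$ for every $u \in T_S$; this follows by induction on the length of $u$, the base case being the definition of $\approx$.

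For the forward direction, let $t \in T_S$ be a retraction for $S'$, so by Lemma \ref{L:tt}, $tt = t$, $t \op S = S'$, and $t$ fixes $S'$ pointwise. Setting $\bar{t} := r_*(t)$, one has $\bar{t}\bar{t} = r_*(tt) = \bar{t}$, $\bar{t} \op \overline{S} = r(t \op S) = r(S')$, and $\bar{t}$ fixes $r(S')$ pointwise (for $\bar{b} = r(b)$ with $b \in S'$, $\bar{t} \op \bar{b} = r(t \op b) = r(b) = \bar{b}$). Hence $r(S')$ is a retract of $\overline{S}$. Next, $r|_{S'} \colon S' \twoheadrightarrow r(S')$ is a shelf morphism, and it is injective: if $b, b' \in S'$ satisfy $b \approx b'$, then the recorded fact gives $t \op b = t \op b'$, i.e., $b = b'$ since $t$ fixes $S'$. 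Thus $S' \cong r(S')$ as shelves.

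For the converse, given a retract $\overline{S}'$ of $\overline{S}$ with retraction $\bar{t}$, I would use surjectivity of $r_*$ to choose a preimage $t_0 \in T_S$, then upgrade it to an idempotent $t \in T_S$ still lifting $\bar{t}$. With such a $t$, the set $S' := t \op S$ is a sub-shelf (as the image of the shelf morphism $t = \tau_{c_1} \cdots \tau_{c_n}$), a retract of $S$ with retraction $t$ (via $tt = t$), and $r(S') = r_*(t) \op \overline{S} = \overline{S}'$. Injectivity of $r|_{S'}$ copies the forward argument: $t \op a \approx t \op a'$ implies $t^2 \op a = t^2 \op a'$ by the equivariance fact, and this equals $t \op a = t \op a'$ by idempotency. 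Hence $S' \cong \overline{S}'$.

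The principal obstacle is precisely the idempotent upgrade in the converse direction. Since $r_*(t_0^n) = \bar{t}^n = \bar{t}$ for all $n \geq 1$, every power of $t_0$ lifts $\bar{t}$; when $T_S$ is finite (the setting of the subsequent applications to finite monogenic shelves in Sections \ref{S:Cyclic}--\ref{S:Laver}), standard semigroup theory produces $n$ for which $t := t_0^n$ is idempotent. In full generality a finer argument would be required, but this suffices for the paper's intended use.
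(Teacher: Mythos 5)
Your forward direction matches the paper's proof in substance: transport the retraction $t$ along $r_* \colon T_S \twoheadrightarrow T_{\overline{S}}$, check the two defining properties of a retraction for $r_*(t)$, and get injectivity of $r|_{S'}$ from the fact that $t$ fixes $S'$ pointwise while identifying $\approx$-equivalent elements. That is sound.

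The gap is exactly where you located it, and it is not merely cosmetic: Proposition~\ref{P:ReducRetract} is stated for an \emph{arbitrary} shelf $(S,\op)$, with no quasi-finiteness hypothesis, so appealing to finiteness of $T_S$ to produce an idempotent power of a lift $t_0$ of $\bar t$ does not prove the stated result. You do not in fact need any finiteness, and the fix is short enough that it is worth having. Take any lift $t_0 \in T_S$ of the retraction $\bar t$. Since $r \circ t_0 = \bar t \circ r$ and $r \circ t_0^2 = \bar t^{\,2} \circ r = \bar t \circ r$ (using $\bar t^{\,2} = \bar t$, Lemma~\ref{L:tt}), one gets $r(t_0 \op b) = r(t_0^2 \op b)$, i.e.\ $t_0 \op b \approx t_0^2 \op b$, for every $b \in S$. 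Now apply your ``recorded fact'' with $u = t_0$: this yields $t_0^2 \op b = t_0^3 \op b$ for all $b$, hence $t_0^3 = t_0^2$ in $T_S$; consequently $t_0^4 = t_0 \cdot t_0^3 = t_0 \cdot t_0^2 = t_0^3 = t_0^2$, so $t := t_0^2$ is idempotent and still lifts $\bar t$. With this $t$ in hand, the rest of your converse argument (that $t \op S$ is a retract of $S$ mapping onto $\overline{S}'$ under $r$, and that $r$ is injective on it) goes through exactly as you wrote it. This is also the route the paper takes.
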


\begin{proof}
Let~$S'$ be a retract of~$S$ with a retraction~$t$, and denote by $\tau \colon S \twoheadrightarrow S'$ the induced projection. The latter factors through~$\overline{S}$, since for all $a \approx b$ one has $t \op a = t \op b$. This yields the commutative diagram of shelf morphisms
$$\xymatrix@!0 @R=1.5cm @C=2.5cm{
 **[r]S'\; \ar@{->>}[d]^{r'} \ar@{^{(}->}[r]^i \ar@/^1.7pc/[rr]^{\Id} & S \ar@{->>}[d]^{r} \ar@{->>}[r]^{\tau} & S' \\
r(S') \ar@{^{(}->}[r]^{\overline{i}} & \overline{S} \ar[ur] & }$$
where~$r'$ is the restriction of the reduction~$r$, and~$i$ and~$\overline{i}$ are shelf inclusions. From this diagram one deduces the injectivity of~$r'$, which is thus a shelf isomorphism. The reduction~$r$ induces an obvious map $T_r \colon T_S \twoheadrightarrow T_{\overline{S}}$ sending a translation~$\tau_a$ to~$\tau_{r(a)}$. Consider now one more commutative diagram of shelf morphisms:
\begin{align}\label{E:DiagReduc}
\xymatrix@!0 @R=1.5cm @C=2.5cm{
 **[r]S'\; \ar@{<->}[d]^{r'} \ar@{^{(}->}[r]^i \ar@/^1.7pc/[rr]^{\Id} & S \ar@{->>}[d]^{r} \ar@{->>}[r]^{\tau} & S' \ar@{<->}[d]^{r'}\\
 **[l]r(S') \ar@{^{(}->}[r]^{\overline{i}} & \overline{S} \ar[r]^{\overline{\tau}} &  **[r]r(S') }
\end{align}
where~$\overline{\tau}$ is induced by~$T_r(t)$. The bijectivity of~$r'$ yields $\overline{\tau} \circ \overline{i} = \Id_{r(S')}$, hence $r(S') \cong S'$ is a retract of~$\overline{S}$, with a retraction~$T_r(t)$.

Now, let~$\overline{S}'$ be a retract of~$\overline{S}$ with a retraction $\overline{t} \in T_{\overline{S}}$, and chose one of the lifts $t \in T_S$ of~$\overline{t}$ with respect to~$T_r$. By the definition of~$T_r$, one has the equalities $r \circ t = \overline{t} \circ r$ and $r \circ t^2 = \overline{t}^2 \circ r$ of maps $S \to \overline{S}$. The four of them thus coincide, since $\overline{t}^2 = \overline{t}$ (Lemma~\ref{L:tt}). In particular, one has $t^2 \op b \approx t \op b$ for all $b \in S$, so, by the definition of the relation~$\approx$, one has $t^3 \op b = t \op (t^2 \op b) = t \op (t \op b) = t^2 \op b$, implying $t^3 = t^2$, and hence $t^4 = t^2$, in~$T_S$. Consequently, $S'=t^2 \op S$ is a retraction of~$S$ (Lemma~\ref{L:tt} again). Further, $r \circ t^2 = \overline{t} \circ r$ implies that~$r$ restricts to a morphism of shelves $r' \colon S' \to \overline{S}'$, surjective since~$r$ is so and since $\overline{t} \op \overline{S} = \overline{S}'$. It remains to show its injectivity. Suppose that~$r'$ coincides on two elements $t^2 \op a$ and $t^2 \op b$ of~$S'$, with $a,b \in S$. It means $t^2 \op a \approx t^2 \op b$, hence $t^2 \op a = t^3 \op a = t^3 \op b = t^2 \op b$, as desired. 
\end{proof} 

Applying the cohomology comparison from Theorem~\ref{T:RetractHom} to the diagram~\eqref{E:DiagReduc}, and recalling the rack retract existence statements from Theorem~\ref{T:RetractRack}, one obtains
\begin{corollary}\label{C:ReducHom}
\begin{enumerate}
\item For a quasi-finite shelf, reduction induces an isomorphism in cohomology.
\item The rack types of a quasi-finite shelf and its reduced are the same.
\end{enumerate}
\end{corollary}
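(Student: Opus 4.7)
The plan is to feed the commutative diagram~\eqref{E:DiagReduc} furnished by Proposition~\ref{P:ReducRetract} into the cohomological comparison of Theorem~\ref{T:RetractHom}. First I would invoke Theorem~\ref{T:RetractRack} to obtain a rack retract $S'$ of the quasi-finite shelf $(S,\op)$; Proposition~\ref{P:ReducRetract} then yields diagram~\eqref{E:DiagReduc}, which exhibits a shelf isomorphism $r'\colon S'\overset{\sim}{\to} r(S')$ (the restriction of the reduction $r$) and realizes $r(S')$ as a retract of $\overline{S}$. Since $r'$ is an isomorphism of shelves and $S'$ is a rack, the rack structure transports: $r(S')$ is itself a rack, and therefore a rack retract of $\overline{S}$. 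The uniqueness up to isomorphism of rack type granted by Theorem~\ref{T:RetractRack} then gives $\MRR(\overline{S})\cong r(S')\cong S'\cong\MRR(S)$, settling Part~2.

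For Part~1, I would apply the contravariant cohomology functor $H^\bullet(-,A)$ to the same diagram~\eqref{E:DiagReduc}. By Theorem~\ref{T:RetractHom}, both inclusions $i\colon S'\hookrightarrow S$ and $\overline{i}\colon r(S')\hookrightarrow\overline{S}$ induce isomorphisms $i_*^\#$ and $\overline{i}_*^\#$ in cohomology; the shelf isomorphism $r'$ induces an isomorphism $r'^{\,\#}_*$ as well. Functoriality together with the commutativity $r\circ i=\overline{i}\circ r'$ then yields the identity $i_*^\#\circ r_*^\#=r'^{\,\#}_*\circ\overline{i}_*^\#$, from which $r_*^\#=(i_*^\#)^{-1}\circ r'^{\,\#}_*\circ\overline{i}_*^\#$ is a composition of three isomorphisms, and hence an isomorphism itself.

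The argument presents no substantial obstacle: every nontrivial ingredient---the existence of rack retracts in the quasi-finite setting, the diagram comparing retracts of $S$ with those of $\overline{S}$, and the invariance of cohomology under retraction---is already in place. The only point requiring care is bookkeeping of arrow directions, since $r$ induces $r_*^\#\colon H^\bullet(\overline{S},A)\to H^\bullet(S,A)$ (the reverse of $r$), and the factorization through the rack retract $S'$ must be composed in the compatible order for the commutative square to give the desired conclusion.
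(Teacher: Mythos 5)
Your argument is correct and follows exactly the route the paper intends: take a rack retract $S'$ of $S$ (Theorem~\ref{T:RetractRack}), use the commutative diagram~\eqref{E:DiagReduc} from Proposition~\ref{P:ReducRetract} to identify $r(S')\cong S'$ as a retract of $\overline S$, and then invoke Theorem~\ref{T:RetractHom} on both inclusions together with contravariant functoriality to conclude that $r_*^\#$ factors as a composition of isomorphisms. Part~2 likewise falls out of the isomorphism $r'$ and the uniqueness of the rack type, just as in the paper's one-line proof.
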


\section{Cohomological aspects of cyclic shelves}\label{S:Cyclic}  

In the remaining sections, we prove Theorem~A by presenting an explicit description of all cocycles, coboundaries, and cohomology groups for a cyclic shelf / Laver table $(S,\op)$. Besides their explicit character, our calculations have the advantage of treating arbitrary coefficients, whereas for cyclic shelves the methods from Sections~\ref{S:Proj} and~\ref{S:Retract} impose some conditions on the coefficient group~$A$. We start with the scheme of our proof, common for the two cases:

\begin{description}
\item[Step 1] Exhibit a family of $k$-tuples $\oa^{(i)}$, $i \in I$, such that any cocycle $\phi \in Z^k(S)$\footnote{From now on we omit the coefficients $A = \ZZ$ for brevity, putting $H^k(S) = H^k(S,\ZZ)$, etc.} is completely determined by its values on these $k$-tuples (in other words, $\phi(\oa^{(i)})$ vanishes for all~$i$ if and only if~$\phi$ is the zero map).
\item[Step 2] Find cocycles $\phi^{(i)} \in Z^k(S)$, $i \in I$, such that the matrix $(\phi^{(i)}(\oa^{(j)}))_{i,j \in I}$ is invertible (i.e., belongs to $GL_{|I|}(\ZZ)$).
\item[Step 3] Show that for some $i_0 \in I$,  the cocycles $\phi^{(i)}$, $i \in I \setminus \{i_0\}$ are coboundaries generating the abelian group~$B^k(S)$.
\end{description} 
If all these properties are established, one immediately concludes that:
\begin{itemize}
\item maps $\phi^{(i)}$, $i \in I \setminus \{i_0\}$ form a basis of~$B^k(S)$;
\item completed with $\phi^{(i_0)}$, they yield a basis of~$Z^k(S)$;
\item $H^k(S) \cong \ZZ$, the class of~$\phi^{(i_0)}$ being its generator.
\end{itemize}
This implies Theorem~A.

\begin{remark}
For expository reasons, we work with coefficients in $A=\ZZ$ only. Our proof extends to the case of a general abelian group~$A$ simply by replacing  all the maps $\phi\colon S^{\times k} \rightarrow \ZZ$ below with families of maps $(\phi_\alpha\colon \oa \mapsto \phi(\oa) \alpha)_{\alpha \in A}$. 
Alternatively, one can use the universal coefficient theorem.
\end{remark}
 
Most maps $\phi\colon S^{\times k} \rightarrow \ZZ$ used in the proof will be expressed in terms of \textit{generalized Kronecker delta functions}
\begin{equation*}
\delta_{\oa}(\ob)= \begin{cases} 1 & \text{ if $a_r=b_r$ for all $r$,} \\ 0 & \text{ otherwise.}\end{cases}
\end{equation*}

In this section we work with a cyclic shelf~$C_{r,m}$, where $r \ge 0, m \ge 1$. In order to simplify the definition~\eqref{E:CShelf} of its shelf operation, we declare $(m-1)+1=0$. Also, for a tuple $\oc =(c_1,\ldots,c_t)$, we put $\oc+1 = (c_1+1,\ldots,c_t+1)$.

We will now prove Theorem~A for~$C_{r,m}$ and~$k>0$, avoiding the trivial case $k=0$.
The proof will rely on a thorough study of tuples containing consecutive elements $a+1,a$ at some place. Concretely, consider the sets
\begin{align*}
I^-_{r,m,k} &= \big\{\, \oa \in C_{r,m}^{\times k} \,\big|\, a_1 = a_3 = \ldots = a_{2t-1}=0, a_2 = a_4 = \ldots = a_{2t}=m-1, \\
& \qquad\qquad\qquad a_{2t+1}=a_{2t+2}+1, a_{2t+2} \neq m-1 \text{ for some } 0 \le t \le \tfrac{k}{2} -1\,\big\},\\ 
I_{r,m,k} &= I^-_{r,m,k} \coprod \big\{\, \oi_0 = (0,m-1,0,m-1,\ldots)  \in C_{r,m}^{\times k} \, \big\}.
\end{align*}

A straightforward computation of the size of these sets allows us to recover the polynomials~\eqref{E:Pk} announced by Theorem~A: 

\begin{lemma}
One has $|I^-_{r,m,k}| = P_k(r+m)$.
\end{lemma}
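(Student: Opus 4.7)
The plan is to partition $I^-_{r,m,k}$ according to the witnessing index $t$, count each part directly, and recognize the resulting geometric sum as $P_k(n)$ with $n = r+m = |C_{r,m}|$.

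First, I would check that the index $t$ in the definition of $I^-_{r,m,k}$ is \emph{uniquely determined} by $\oa$. Indeed, positions $1,2,\ldots,2t$ are forced to alternate between $0$ and $m-1$, and the breaking condition $a_{2t+2}\neq m-1$ prevents the pattern from being continued one step further; so no two distinct values of $t$ can witness membership of the same tuple. Hence $I^-_{r,m,k}$ is the disjoint union, over the integers $t$ with $0 \le t$ and $2t+2 \le k$, of the subsets $I^{(t)}_{r,m,k}$ of tuples witnessed by $t$.

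Next, for fixed $t$, a tuple $\oa \in I^{(t)}_{r,m,k}$ is specified by: (i) its first $2t$ entries, which are forced; (ii) the entry $a_{2t+2}$, which can be any element of $C_{r,m}\setminus\{m-1\}$, giving $n-1$ choices (note $a_{2t+1}=a_{2t+2}+1$ is then automatically in $C_{r,m}$ and determined); and (iii) the remaining $k-2t-2$ entries, which are arbitrary, giving $n^{k-2t-2}$ choices. Thus $|I^{(t)}_{r,m,k}| = (n-1)\,n^{k-2t-2}$.

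Summing over the admissible $t$ yields
\begin{equation*}
|I^-_{r,m,k}| \;=\; (n-1)\sum_{t=0}^{\lfloor k/2\rfloor -1} n^{k-2t-2},
\end{equation*}
which is a geometric series in $n^2$. Evaluating it and factoring $n^2-1 = (n-1)(n+1)$ collapses the $(n-1)$ factor, giving $(n^k - n^{k \bmod 2})/(n+1) = P_k(n)$; the two parities of $k$ have to be recorded separately to identify the lower exponent as $k \bmod 2$. The only delicate point is the uniqueness of the witnessing $t$ (ensuring disjointness of the partition); once that is settled, the remainder is a short routine calculation.
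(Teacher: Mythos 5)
Your proof is correct and is exactly the ``straightforward computation'' the paper alludes to but does not write out: partition $I^-_{r,m,k}$ by the (unique) witnessing index $t$, count each block as $(n-1)\,n^{k-2t-2}$ with $n=r+m$, and sum the geometric series, which telescopes against $n+1$ to produce $P_k(n)$. The uniqueness of the witnessing $t$, which you rightly flag as the one point requiring care, does hold: if $t<t'$ both witnessed $\oa$, then $2t+2\le 2t'$ would force $a_{2t+2}=m-1$ from the $t'$-pattern, contradicting $a_{2t+2}\neq m-1$ from the $t$-pattern.
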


Step 1 from our general proof scheme follows from
\begin{lemma}\label{L:CyclicA}
A cocycle $\phi \in Z^k(C_{r,m})$ vanishing on all $\oa \in I_{r,m,k}$ is the zero cocycle.
\end{lemma}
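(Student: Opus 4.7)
The plan is to reduce the value of $\phi$ at an arbitrary tuple $\oa \in C_{r,m}^{\times k}$ to its values on tuples in $I_{r,m,k}$, using the cocycle identity, and then invoke the vanishing hypothesis. The key simplification is that for the cyclic shelf, $a \op b = \theta(b)$ depends only on $b$, where $\theta(b) = b+1$ for $b \neq m-1$ and $\theta(m-1) = 0$. Under this simplification, the cocycle identity $(d^k \phi)(a_1, \ldots, a_{k+1}) = 0$ becomes a shift-type relation among $\phi$-values on tuples that differ only in $\theta$-shifts of a trailing block; by specialising the $a_i$'s appropriately in $d^k\phi=0$, I would extract a small family of ``elementary reductions'' of the form $\phi(\oa) = \sum_j \varepsilon_j\,\phi(\oa^{(j)})$ with $\varepsilon_j \in \{\pm 1\}$.

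To make ``simpler'' precise, I would introduce the complexity measure $\ell(\oa)$, the length of the longest initial segment of $\oa$ matching the alternating pattern $0, m-1, 0, m-1, \ldots$. Then $\ell(\oi_0) = k$, while the tuples in $I^-_{r,m,k}$ are exactly those for which the first deviation from this pattern, occurring at position $2t+1$ or $2t+2$, takes the shape of a ``descending pair'' $(a_{2t+2}+1, a_{2t+2})$ with $a_{2t+2} \neq m-1$. The elementary reductions should be chosen so that whenever $\oa \notin I_{r,m,k}$, each $\oa^{(j)}$ appearing on the right-hand side either belongs to $I_{r,m,k}$ or satisfies $\ell(\oa^{(j)}) > \ell(\oa)$. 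A downward induction on $k-\ell(\oa)$ then concludes: the base case $\ell(\oa) = k$ gives $\oa = \oi_0 \in I_{r,m,k}$ on which $\phi$ vanishes by assumption, and the inductive step is precisely the reduction just described combined with the vanishing on $I^-_{r,m,k}$.

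\textbf{Main obstacle.} The delicate part will be the case analysis required to set up the elementary reductions, the principal complication being that $\theta$ is not injective because of the wrap $\theta(m-1) = 0$. Producing ``$\theta$-predecessors'' of a given element via a substitution in the cocycle identity requires distinguishing whether the relevant coordinates equal $m-1$, leading to several sub-cases. A further subtlety is that when the alternating prefix of $\oa$ has length $2t+1$ (i.e.\ $a_{2t+1} = 0$) with $a_{2t+2} \notin \{m-1, -1\}$, the natural reduction must be designed so as to strictly increase $\ell$ and not merely shuffle $\oa$ within the same level; a careful choice of the auxiliary variable $a_1'$ (or $a_{k+1}'$) in the cocycle equation, together with a parity split on $\ell(\oa)$, should cover all cases without circularity.
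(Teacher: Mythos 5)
Your plan is essentially a within--degree-$k$ reformulation of the same cocycle manipulations the paper uses, but the paper organises them by a cleaner induction: on the degree~$k$ itself, not on the complexity measure $\ell(\oa)$ inside a fixed $k$. Concretely, the paper first shows (from $(d^k\phi)(a{+}1,a,\oc)=0$ with the $i\ge 3$ terms landing in $I_{r,m,k}$) that $\phi$ is constant in its first coordinate; this alone already kills $\ell=0$ and $\ell=1$. The key step you would need to reinvent is that the partial evaluation $\ophi(\oc)=\phi(0,m-1,\oc)$ is a $(k-2)$-cocycle vanishing on $I_{r,m,k-2}$: the $i=1,2$ summands of $(d^k\phi)(0,m-1,\oc)=0$ cancel by first-coordinate constancy, leaving exactly $d^{k-2}\ophi(\oc)=0$, and each element of $I_{r,m,k-2}$ prefixed by $(0,m-1)$ lies in $I_{r,m,k}$. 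Induction on $k$ then gives $\ophi=0$, and combined with $\phi(a,b,\oc)=0$ for $b\ne m-1$ and $\phi(a,m-1,\oc)=\phi(0,m-1,\oc)$, one gets $\phi=0$.

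Your downward induction on $k-\ell(\oa)$ would, if fleshed out, have to rediscover this fact level by level: the ``constancy in the $(2t+1)$-st coordinate given prefix $(0,m-1)^t$'' that you need at level $\ell=2t$ is exactly constancy in the first coordinate of the $(k-2t)$-cocycle $\phi((0,m-1)^t,-)$, and establishing it requires the same statement one level up. This is a $t$-indexed stack of facts, not an $\ell$-indexed one, so the induction variable that really does the work is $k-2t$, which is the paper's induction on $k$. Your plan is not wrong, and you correctly identify the subtle cases ($a_{2t+2}\in\{m-1,-1\}$, the non-injectivity of $\theta$ at $m-1$); but the ``elementary reductions'' you postulate are not obtained by single applications of $d^k\phi=0$ and will not uniformly push $\ell$ up --- some produce terms at \emph{lower} $\ell$ that must be handled by the constancy facts rather than by your inductive hypothesis. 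Packaging those facts as ``$\ophi$ is a lower-degree cocycle'' is the ingredient that keeps the paper's proof from collapsing into a tangle of sub-cases, and it is worth adopting rather than reconstructing inside the $\ell$-induction.
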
 

\begin{proof}
Use induction on~$k$. Case $k=1$ is easy. Suppose now $k \ge 2$.
For any $\oc \in C_{r,m}^{\times (k-1)}$ and $a \in C_{r,m}$, $a \neq m-1$, one has
\begin{align*}
0 &= (d^k\phi)(a+1,a,\oc) \\
  & = (\phi(a+1,\oc+1)-\phi(a,\oc))-(\phi(a+1,\oc+1)-\phi(a+1,\oc))\\
  & = \phi(a+1,\oc)-\phi(a,\oc)
\end{align*}
(we omitted further terms: they have the form $\phi(a+1,a,\ldots)$, and vanish since $(a+1,a,\ldots)$ lies in~$I_{r,m,k}$), implying
\begin{align}\label{E:FirstTerm}
\phi(-r,\oc)  & = \phi(-(r-1),\oc) = \ldots = \phi(m-1,\oc).
\end{align}
In particular, for any $\oc \in C_{r,m}^{\times (k-2)}$, $a,b \in C_{r,m}$, $b \neq m-1$, one obtains
\begin{align}
\phi(a,b,\oc)  & = \phi(b+1,b,\oc) = 0,\label{E:FirstTerm2}\\
\phi(a,m-1,\oc)  & = \phi(0,m-1,\oc).\label{E:FirstTerm3}
\end{align}
For $k=2$, one concludes by recalling that $(0,m-1) \in I_{r,m,2}$. For $k>2$, define $\ophi \in C^{k-2}(C_{r,m})$ by $\ophi(\oc)=\phi(0,m-1,\oc)$. Now, for any $\oc \in C_{r,m}^{\times (k-1)}$, one calculates
\begin{align*}
0 &= (d^k\phi)(0,m-1,\oc) \\
  & = (\phi(0,\oc+1)-\phi(m-1,\oc))-(\phi(0,\oc+1)-\phi(0,\oc))+ (d^{k-2}\ophi)(\oc)\\
  & = (d^{k-2}\ophi)(\oc)
\end{align*}
(in the last step we used~\eqref{E:FirstTerm}). Hence~$\ophi$ is a $k-2$ cocycle. It vanishes on all $\oa \in I_{r,m,k-2}$, since $(0,m-1,\oa) \in I_{r,m,k}$ for such~$\oa$. The induction hypothesis implies that~$\ophi$ is the zero map, hence, according to~\eqref{E:FirstTerm2}-\eqref{E:FirstTerm3}, so is~$\phi$.
\end{proof}

We then construct a basis for cocycles. Take a $k$-tuple $\oa \in I^-_{r,m,k}$, presented as
 $((0,m-1)^t,a_{2t+2}+1, a_{2t+2},\oc)$, where $a_{2t+2} \neq m-1$, and~$(0,m-1)^t$ denotes the sequence~$(0,m-1)$ repeated~$t$ times. Associate to it the $k-1-2t$-tuple $\ooa = (a'_{2t+2},\oc)$, where $a'_{2t+2}=a_{2t+2}$ except when~$\oc$ is non-empty and satisfies $a_{2t+2} = c_1+1$, in which case put $a'_{2t+2}=m-1$. Next, define the following map in~$C^k(C_{r,m})$:
\begin{align*}
\phi^{(\oa)}(b_1,\ldots,b_k) &= \delta_{m-1}(b_2)\delta_{m-1}(b_4)\cdots\delta_{m-1}(b_{2t}) d^{k-1-2t} \delta_{\ooa}(b_{2t+1},\ldots,b_k).
\end{align*}

For $k=1$, take as~$\phi^{(\oi_0)}$ the constant map $b \mapsto 1$. For a~$k \ge 2$, $k\in\{2s,2s+1\}$, put
\begin{align*}
\phi^{(\oi_0)}(b_1,\ldots,b_k) &= \delta_{m-1}(b_2)\delta_{m-1}(b_4)\cdots\delta_{m-1}(b_{2s}).
\end{align*}

\begin{lemma}
The map~$\phi^{(\oi_0)}$ is a cocycle, and all the maps~$\phi^{(\oa)}$ are coboundaries. 
\end{lemma}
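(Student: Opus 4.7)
The plan is to verify both claims by direct term-by-term computation on the differential, exploiting the fact that in $C_{r,m}$ the left multiplication $a\op b$ depends only on $b$. Writing $\theta(b)=b+1$ (with $\theta(m-1)=0$), one has $a\op b=\theta(b)$ for every $a$, so the differential splits cleanly as
$$(d^k\phi)(a_1,\ldots,a_{k+1})=\sum_{i=1}^k(-1)^{i-1}\Phi_i(\oa),\qquad \Phi_i(\oa)=\phi(\ldots,\theta(a_{i+1}),\ldots,\theta(a_{k+1}))-\phi(\ldots,a_{i+1},\ldots,a_{k+1}),$$
where the first $i-1$ arguments are always $a_1,\ldots,a_{i-1}$. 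In particular, $\Phi_i(\oa)$ does not depend on~$a_i$.

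For the cocycle statement I would analyze $\phi^{(\oi_0)}$ position by position. Since $\phi^{(\oi_0)}$ only sees the even-indexed arguments (up to index $2s$, where $k\in\{2s,2s+1\}$), the key observation is this combinatorial lemma: for each $1\le u\le s$, dropping position $i=2u-1$ or position $i=2u$ moves the even-indexed entries of the resulting $(k)$-tuple to the \emph{same} set of original coordinates, namely $a_2,\ldots,a_{2u-2}$ (unaltered) followed by $a_{2u+1},\ldots,a_{2s+1}$ (with or without $\theta$ in the respective halves of $\Phi_i$). Hence $\Phi_{2u-1}(\phi^{(\oi_0)})=\Phi_{2u}(\phi^{(\oi_0)})$, and the two cancel thanks to their opposite signs. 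When $k=2s+1$, the extra term $i=k$ vanishes because $\theta$ is then only applied at the very last coordinate, which $\phi^{(\oi_0)}$ ignores.

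For the coboundary statement, set
$$\eta(b_1,\ldots,b_{k-1})\;=\;\delta_{m-1}(b_2)\cdots\delta_{m-1}(b_{2t})\,\delta_{\ooa}(b_{2t+1},\ldots,b_{k-1})\;\in\;C^{k-1}(C_{r,m}),$$
and aim to prove $d^{k-1}\eta=\phi^{(\oa)}$. Split the sum $d^{k-1}\eta=\sum_{i=1}^{k-1}(-1)^{i-1}\Phi_i(\eta)$ into $i\le 2t$ and $i>2t$. For $i>2t$, the factors $\delta_{m-1}(b_{2j})$ with $2j\le 2t$ are unaffected by dropping position~$i$ or by the $\theta$'s appearing only at positions $\ge i$; they therefore factor out of $\Phi_i(\eta)$. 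Reindexing $i=2t+j$ and matching signs $(-1)^{i-1}=(-1)^{j-1}$, the remaining sum is exactly $d^{k-1-2t}\delta_{\ooa}$ evaluated on $(a_{2t+1},\ldots,a_k)$, so the contribution of $i>2t$ reproduces $\phi^{(\oa)}$. For $i\le 2t$, the $\delta_{\ooa}$-factor is the same for every such $i$ (it always sees $a_{2t+2},\ldots,a_k$, with or without $\theta$), and the combinatorial lemma from the first step applied to the $\delta_{m-1}$-factors again yields $\Phi_{2u-1}(\eta)=\Phi_{2u}(\eta)$ for $1\le u\le t$; these pairs cancel.

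The main obstacle is the careful bookkeeping of how dropping position~$i$ and applying~$\theta$ past position~$i$ re-indexes the arguments, and in particular the verification of the pairing identity $\Phi_{2u-1}=\Phi_{2u}$ in the precise form needed for both computations. Once one tabulates how an old position $\ell$ becomes new position~$\ell$ (for $\ell<i$) or $\ell-1$ (for $\ell>i$), and notes that the parity of $\lfloor\ell/2\rfloor$ in the two cases $i=2u-1$ and $i=2u$ agrees, the cancellation becomes mechanical. Everything else is a direct substitution into the explicit formulas defining $\phi^{(\oi_0)}$, $\phi^{(\oa)}$, and $\eta$.
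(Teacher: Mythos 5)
Your proof is correct and follows essentially the same line as the paper: your $\eta$ is identical to the paper's $\psi$ (the paper writes it as $\sum_{u_1,\ldots,u_t}\delta_{u_1,m-1,\ldots,u_t,m-1,\ooa}$, which is the same function), and your pairing $\Phi_{2u-1}=\Phi_{2u}$ is exactly the paper's regrouping of the differential into blocks of four terms (equation~\eqref{E:CocRegrouped}), with cancellation coming from independence of the odd coordinates. The only cosmetic difference is that the paper absorbs the extra $i=k$ term for odd $k$ into a degenerate final block of~\eqref{E:CocRegrouped} rather than treating it separately, as you do.
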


\begin{proof}
Regroup the terms in the definition~\eqref{E:RackCohom} of our differential as follows:
\begin{align}
& \qquad\qquad\qquad (d^k \phi)(b_1, \ldots, b_{k+1}) =\notag\\
 \sum_{i=1}^{s} ((& \phi(b_1,\ldots,b_{2i-2},{b_{2i}+1}, b_{2i+1}+1, \ldots)-\phi(b_1,\ldots,b_{2i-2},{b_{2i-1}},  b_{2i+1}+1, \ldots))\notag\\ 
& - (\phi(b_1,\ldots,b_{2i-2},{b_{2i}},b_{2i+1}, \ldots)- \phi(b_1,\ldots,b_{2i-2},{b_{2i-1}},b_{2i+1}, \ldots))),\label{E:CocRegrouped}
\end{align}
where $k+1 \in \{2s,2s+1\}$. Now, the value of~$\phi^{(\oi_0)}$ on a $k$-tuple does not depend on the odd coordinates of the tuple, implying
\begin{align*}
\phi^{(\oi_0)}(b_1,\ldots,b_{2i-2},\underline{b_{2i}+1}, b_{2i+1}+1, \ldots) &=\phi^{(\oi_0)}(b_1,\ldots,b_{2i-2},\underline{b_{2i-1}},  b_{2i+1}+1, \ldots),\\
\phi^{(\oi_0)}(b_1,\ldots,b_{2i-2},\underline{b_{2i}},b_{2i+1}, \ldots) &= \phi^{(\oi_0)}(b_1,\ldots,b_{2i-2},\underline{b_{2i-1}},b_{2i+1}, \ldots).
\end{align*}
Hence, according to~\eqref{E:CocRegrouped}, $d^k \phi^{(\oi_0)}$ is zero, as desired.

For an $\oa = ((0,m-1)^t,a_{2t+2}+1, a_{2t+2},\oc) \in I^-_{r,m,k}$ (with $a_{2t+2} \neq m-1)$), we will prove that~$\phi^{(\oa)}$ is a coboundary by showing that
\begin{align*}
\phi^{(\oa)} &= d^{k-1}\psi, & \psi &=  \sum_{u_1,\ldots,u_t \in C_{r,m}} \delta_{u_1,m-1,\ldots,u_t,m-1,\ooa}.
\end{align*}
The value of~$\psi$ on a $(k-1)$-tuple does not depend on the coordinates $1,3,\ldots,2t-1$ of the tuple. Thus \eqref{E:CocRegrouped} gives
\begin{align*}
& \qquad\qquad\qquad (d^{k-1} \psi)(b_1, \ldots, b_{k}) =\\
 \sum_{i=t+1}^{s} ((& \psi(b_1,\ldots,b_{2i-2},{b_{2i}+1}, b_{2i+1}+1, \ldots)-\psi(b_1,\ldots,b_{2i-2},{b_{2i-1}},  b_{2i+1}+1, \ldots))\\ 
& - (\psi(b_1,\ldots,b_{2i-2},{b_{2i}},b_{2i+1}, \ldots)- \psi(b_1,\ldots,b_{2i-2},{b_{2i-1}},b_{2i+1}, \ldots))),
\end{align*}
where $k \in \{2s,2s+1\}$. Recalling the definition of~$\psi$, one simplifies this as 
\begin{align}
& \qquad\qquad (d^{k-1} \psi)(b_1, \ldots, b_{k}) =\delta_{m-1}(b_2)\delta_{m-1}(b_4)\cdots\delta_{m-1}(b_{2t})\times \label{E:EvalBasis}\\
&\sum_{i=t+1}^{s} (( \delta_{\ooa}(b_{2t+1},\ldots,b_{2i-2},{b_{2i}+1}, b_{2i+1}+1, \ldots)-\delta_{\ooa}(b_{2t+1},\ldots,b_{2i-2},{b_{2i-1}},  b_{2i+1}+1, \ldots))\notag\\ 
&\qquad - (\delta_{\ooa}(b_{2t+1},\ldots,b_{2i-2},{b_{2i}},b_{2i+1}, \ldots)- \delta_{\ooa}(b_{2t+1},\ldots,b_{2i-2},{b_{2i-1}},b_{2i+1}, \ldots)))\notag\\
&\qquad= \delta_{m-1}(b_2)\delta_{m-1}(b_4)\cdots\delta_{m-1}(b_{2t}) d^{k-1-2t} \delta_{\ooa}(b_{2t+1},\ldots,b_k)\notag\\
&\qquad= \phi^{(\oa)}(b_1, \ldots, b_{k}).\notag\qedhere
\end{align}\qedhere
\end{proof}

We are now ready to complete Step 2 of our general proof scheme.
\begin{lemma}
Matrix $M=(\phi^{(\oi)}(\oj))_{\oi, \oj \in I_{r,m,k}}$ is invertible.
\end{lemma}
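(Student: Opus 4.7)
The plan is to linearly order $I_{r,m,k}$ so that the matrix~$M$ becomes block upper triangular, and then show each diagonal block has determinant~$\pm 1$. Two preliminary simplifications make this tractable. In~$C_{r,m}$ the operation $a\op b$ depends only on~$b$ (with the convention $(m-1)+1=0$, $a\op b$ simply equals $b+1$), so $d^{n-1}\delta_{\od}(\ob)$ reduces to $\sum_{l=1}^{n-1}(-1)^{l-1}(A_l-B_l)$ with
\[A_l=\delta_{\od}(b_1,\dots,b_{l-1},b_{l+1}+1,\dots,b_n+1),\qquad B_l=\delta_{\od}(b_1,\dots,\hat b_l,\dots,b_n).\]
Moreover, for $\oi\in I^-_{r,m,k}$ with parameter~$t$ the prefix $\prod_{u=1}^{t}\delta_{m-1}(j_{2u})$ of $\phi^{(\oi)}(\oj)$ vanishes whenever $\oj\in I^-_{r,m,k}$ has parameter $t'<t$, because the $(2t'+2)$-th entry of~$\oj$ differs from~$m-1$ by definition of~$I^-_{r,m,k}$; likewise $\phi^{(\oi_0)}(\oj)=0$ for every $\oj\in I^-_{r,m,k}$. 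Ordering $I_{r,m,k}$ by increasing~$t$ with~$\oi_0$ placed last therefore makes~$M$ block upper triangular with $(\oi_0,\oi_0)$-entry equal to~$1$, so $\det M=\prod_t\det M^{(t)}$.

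To evaluate the diagonal block~$M^{(t)}$, write $\oi=((0,m-1)^t,\alpha+1,\alpha,\oc)$ and $\oj=((0,m-1)^t,\beta+1,\beta,\oc')$ with $\alpha,\beta\ne m-1$. A short case analysis shows that any $A_l$ or $B_l$ with $l\ge 3$ would require the first two entries of~$\ooi$ to equal $\beta+1$ and $\beta$, which, given $\ooi=(\alpha',\oc)$ with $\alpha'\in\{\alpha,m-1\}$ and $\alpha\ne m-1$, is impossible; and $A_1=A_2$ since~$\op$ is a pure shift. Only $B_1$ and $B_2$ survive, yielding
\[\phi^{(\oi)}(\oj)=\bigl[\alpha'=\beta+1\bigr]-\bigl[\alpha'=\beta\bigr]\qquad\text{when }\oc=\oc',\qquad 0\ \text{otherwise.}\]
Hence $M^{(t)}$ itself block-diagonalises according to the common value of the tail~$\oc$.

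Each $\oc$-subblock~$M_\oc$ has size $(r+m-1)\times(r+m-1)$, with rows and columns labelled by $\alpha\in C_{r,m}\setminus\{m-1\}$ in increasing order. The generic-row formula produces the bidiagonal matrix~$M_0$ with $-1$ on the diagonal and $+1$ on the subdiagonal, whence $\det M_0=(-1)^{r+m-1}$. If $\oc=(c_1,\dots)$ is nonempty with $c_1\ne m-2$, the single row $\alpha=\mu:=c_1+1$ is replaced by~$e_{m-2}^T$, the indicator of the column $\alpha=m-2$. The identity $\sum_\alpha\mathrm{row}_\alpha(M_0)=-e_{m-2}^T$ is immediate, so adding $\sum_{\alpha\ne\mu}\mathrm{row}_\alpha(M_\oc)$ to row~$\mu$ converts $M_\oc$ into~$M_0$ with its $\mu$-th row negated, giving $\det M_\oc=(-1)^{r+m}$. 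In either case $|\det M_\oc|=1$, and assembling the pieces yields $|\det M|=1$.

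The main obstacle will be the $l\ge 3$ vanishing, which requires carefully juggling the two cases $\alpha'=\alpha$ and $\alpha'=m-1$ alongside the modular arithmetic of~$C_{r,m}$. Once the ``$B_2-B_1$'' formula is in hand, the block decomposition of~$M$ and the elementary row reduction of~$M_\oc$ are essentially mechanical.
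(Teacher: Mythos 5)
Your proof is correct and follows essentially the same path as the paper's: order $I_{r,m,k}$ by the parameter $t$ with $\oi_0$ last to obtain a block upper-triangular matrix, further block-diagonalise the diagonal block $M^{(t)}$ by the common tail $\oc$, and observe that on $B_t\times B_t$ the entry reduces to $\bigl(\delta_{\alpha'}(\beta+1)-\delta_{\alpha'}(\beta)\bigr)\delta_{\oc}(\oc')$. The only cosmetic difference is in the last step: the paper presents each $\oc$-subblock as the $(r+m)\times(r+m-1)$ bidiagonal matrix with one row deleted and asserts invertibility, whereas you realize the same subblock as $M_0$ with a single row swapped for the leftover row $e_{m-2}^T$ and make the determinant $\pm1$ explicit via the row-sum identity $\sum_\alpha\mathrm{row}_\alpha(M_0)=-e_{m-2}^T$; both describe the identical matrix up to a row permutation, so your extra computation is a welcome but not essential refinement.
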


\begin{proof}
First, divide the set~$I_{r,m,k}$, with $k \in \{2s,2s+1\}$, into sub-sets $B_s=\{\oi_0\}$ and $B_t$, $0 \le t < s$ , comprising elements $\oa = ((0,m-1)^t,a_{2t+2}+1, a_{2t+2},\oc)$ with $a_{2t+2} \neq m-1$. The latter condition rewrites as $\delta_{m-1}(a_{2t+2}) = 0$. Therefore, $\phi^{(\oi)}(\oj)=0$ if $\oi \in B_u$, $\oj \in B_t$, and $u > t$. Matrix~$M$ is thus block upper triangular. Its last block is the $1 \times 1$ matrix containing~$1$. Let us compute its blocks~$(t,t)$ for $0 \le t \le s-1$. Take  $\oj = ((0,m-1)^t,j_{2t+2}+1, j_{2t+2},\oc) \in B_t$, and $\oi \in B_t$, with $\ooi =(i'_{2t+2},\od)$ and $i'_{2t+2} \neq d_1+1$ (or $i'_{2t+2} \neq m-1$ if~$\od$ is empty). Splitting formula~\eqref{E:EvalBasis} into two parts, one gets
\begin{align*}
\phi^{(\oi)}(\oj) =& ( \delta_{\ooi}(j_{2t+2}+1,\oc+1)-\delta_{\ooi}(j_{2t+2}+1,\oc+1)) - (\delta_{\ooi}(j_{2t+2},\oc)-\delta_{\ooi}(j_{2t+2}+1,\oc)) \\
&+\delta_{(i'_{2t+2},d_1)}(j_{2t+2}+1, j_{2t+2}) d^{k-2t-3} \delta_{d_2,\ldots}(\oc)
\end{align*}
(with the last term omitted if~$\od$ is empty). The last term vanishes, since $i'_{2t+2} \neq d_1+1$. Our evaluation thus becomes very simple:
\begin{align*}
\phi^{(\oi)}(\oj) =& \delta_{\ooi}(j_{2t+2}+1,\oc) - \delta_{\ooi}(j_{2t+2},\oc)= (\delta_{i'_{2t+2}}(j_{2t+2}+1) - \delta_{i'_{2t+2}}(j_{2t+2}))\delta_{\od}(\oc).
\end{align*}
In particular, one observes that the block~$(t,t)$ is in its turn a block diagonal matrix, with elements $((0,m-1)^t,j_{2t+2}+1, j_{2t+2},\oc)$ sharing the same~$\oc$ regrouped together. Order such elements according to their component~$j_{2t+2}$. With this ordering, the sub-block corresponding to a $\oc \in C_{r,m}^{\times (k-2t-2)}$ can be obtained from the matrix 
$$(\delta_{i}(j+1) - \delta_{i}(j))_{i,j \in C_{r,m}, j \neq m-1} = 
\begin{pmatrix}
  -1 & 0 & 0 & \cdots & 0 & 0 \\
  1 & -1 & 0 & \cdots & 0 & 0 \\
  0 & 1 & -1 & \cdots & 0 & 0 \\  
  \vdots & \vdots & \vdots & \vdots  & \ddots & \vdots  \\
  0 & 0 & 0 & \cdots & -1 & 0 \\ 
  0 & 0 & 0 & \cdots & 1 & -1 \\ 
  0 & 0 & 0 & \cdots & 0 & 1 \\  
 \end{pmatrix}.$$ 
 by removing line~$c_1+1$ or~$m-1$ (depending on the condition on~$i'_{2t+2}$). Each sub-block is thus invertible, hence so is the whole matrix~$M$.
\end{proof}

The remaining Step 3 follows from

\begin{lemma}
Coboundaries $\phi^{(\oa)}$, $\oa \in I^-_{r,m,k}$, generate~$B^k(C_{r,m})$.
\end{lemma}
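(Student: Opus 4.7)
My plan is to combine the three preceding lemmas of this section with the rational cohomology computation of Example~\ref{Ex:Cyclic}. The first observation is that the vanishing lemma and the invertibility of $M$ together upgrade to a stronger statement: $\{\phi^{(\oi)}\}_{\oi \in I_{r,m,k}}$ is in fact a free $\ZZ$-basis of $Z^k(C_{r,m})$. Indeed, the evaluation
\[
\mathrm{ev}\colon Z^k(C_{r,m}) \longrightarrow \ZZ^{I_{r,m,k}},\qquad \phi \longmapsto (\phi(\oj))_{\oj \in I_{r,m,k}},
\]
is injective by the vanishing lemma; since $M \in GL_{|I_{r,m,k}|}(\ZZ)$, the vectors $\mathrm{ev}(\phi^{(\oi)})$ are $\ZZ$-linearly independent and span $\ZZ^{I_{r,m,k}}$, so $\mathrm{ev}$ is a $\ZZ$-module isomorphism and the $\phi^{(\oi)}$ pull back to a basis of $Z^k(C_{r,m})$.

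Next, I would set $B' = \langle \phi^{(\oa)} : \oa \in I^-_{r,m,k}\rangle \subseteq B^k(C_{r,m})$, which is legitimate by the coboundary part of the second-to-last lemma. Using the decomposition $I_{r,m,k} = I^-_{r,m,k} \sqcup \{\oi_0\}$ and the basis property, the quotient $Z^k(C_{r,m}) / B'$ is infinite cyclic, generated by the image of $\phi^{(\oi_0)}$. The intermediate subgroup $B^k(C_{r,m}) / B'$ of this $\ZZ$ therefore has the form $n\ZZ$ for a unique $n \ge 0$, giving $H^k(C_{r,m}) \cong \ZZ / n\ZZ$.

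The final step is to exclude the possibility $n \ge 1$, which I would do by passing to rational coefficients. Example~\ref{Ex:Cyclic} established $H^k(C_{r,m}, \QQ) \cong \QQ$ via the strong projector $P = \tfrac{1}{m}\sum_{i=0}^{m-1}\theta^{r+i}$. Because $C_{r,m}$ is finite, one has $C^\bullet(C_{r,m},\QQ) \cong C^\bullet(C_{r,m},\ZZ)\otimes_{\ZZ}\QQ$ as cochain complexes, and the flatness of $\QQ$ over $\ZZ$ yields $H^k(C_{r,m}, \ZZ) \otimes_{\ZZ}\QQ \cong \QQ$. In particular $H^k(C_{r,m}, \ZZ)$ has $\ZZ$-rank one, hence cannot be a finite cyclic group. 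Thus $n = 0$, giving $B^k(C_{r,m}) = B'$, which is the lemma. The main point worth checking is the absence of circularity; but Example~\ref{Ex:Cyclic} is a direct consequence of Theorem~\ref{T:CohomComput}, proved in Section~\ref{S:Proj} without any reliance on Theorem~A, so the argument is sound.
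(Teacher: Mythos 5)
Your proposal is correct and non-circular, but it takes a genuinely different route from the paper. The paper completes this step by constructing an explicit linear functional $L(\phi)=\sum_{\ob\in D_{m,k}}\phi(\ob)$ on the set $D_{m,k}$ of $k$-tuples with consecutive descents, and showing by a direct regrouping of terms in $d^{k-1}\psi$ that $L$ takes the value $1$ on $\phi^{(\oi_0)}$ and $0$ on every coboundary; since any coboundary is a $\ZZ$-linear combination of the $\phi^{(\oi)}$ by the preceding two lemmas, applying $L$ forces the $\phi^{(\oi_0)}$-coefficient to vanish. You instead observe that the vanishing lemma and the invertibility of $M$ together exhibit $\{\phi^{(\oi)}\}_{\oi\in I_{r,m,k}}$ as a free $\ZZ$-basis of $Z^k(C_{r,m})$, so that $B'$ sits inside $Z^k$ as a direct summand of corank one; the index $n$ with $H^k\cong\ZZ/n\ZZ$ is then killed by the rational computation $H^k(C_{r,m},\QQ)\cong\QQ$ from Example~\ref{Ex:Cyclic}, using finiteness of $C_{r,m}$ and flatness of $\QQ$ over $\ZZ$. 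Your approach is shorter and reuses the projector machinery of Section~\ref{S:Proj} as a black box; the paper's approach is self-contained within Section~\ref{S:Cyclic} and produces an explicit calibrating functional (which is exactly the kind of concrete control over cocycles that the rest of that section is after). Both arguments are written for $\ZZ$ coefficients and extend to general $A$ by the same remark preceding the proofs.
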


\begin{proof}
Any coboundary is a cocycle, and thus, according to previous lemmas, can be presented as a linear combination of coboundaries $\phi^{(\oa)}$, $\oa \in I^-_{r,m,k}$ and of~$\phi^{(\oi_0)}$. In order to see that the latter has coefficient~$0$ in this linear combination, remark that 
\begin{gather*}
\textstyle \sum_{\ob \in D_{m,k}}\phi^{(\oi_0)}(\ob) = \phi^{(\oi_0)}(\oi_0)=1,\\
D_{m,k} = \big\{\, \ob \in \{0,1,\ldots, m-1\}^k \,\big|\, b_1 = b_2+1, \ldots, b_{2s-1} = b_{2s}+1,b_{2s+1} =0 \,\big\}
\end{gather*}
(here as usual $k \in \{2s,2s+1\}$, and condition $b_{2s+1} =0$ is omitted if $k=2s$). We will now prove  $\sum_{\ob \in D_{m,k}}\phi(\ob) =0$ for any coboundary $\phi = d^{k-1}\psi$. For a~$\ob \in D_{m,k}$, regroup the terms of $d^{k-1}\psi(\ob)$ as in~\eqref{E:CocRegrouped}. Condition $b_{2i-1} = b_{2i}+1$ implies 
\begin{align*}
\psi(b_1,\ldots,b_{2i-2},\underline{b_{2i}+1}, b_{2i+1}+1, \ldots) &=\psi(b_1,\ldots,b_{2i-2},\underline{b_{2i-1}},  b_{2i+1}+1, \ldots),
\end{align*}
killing a half of the terms summed in~\eqref{E:CocRegrouped}. The sum $\sum_{\ob \in D_{m,k}}(d^{k-1} \psi)(\ob)$ thus becomes
\begin{align*}
-&\sum_{i=1}^{s}  \sum_{\ob \in D_{m,k}}(\phi(b_1,\ldots,b_{2i-2},{b_{2i}},b_{2i+1}, \ldots)- \phi(b_1,\ldots,b_{2i-2},{b_{2i-1}},b_{2i+1}, \ldots))=\\
 -&\sum_{i=1}^{s}  \sum_{\quad \substack{(b_1,\ldots,b_{2i-2},b_{2i+1},\ldots,b_k) \\ \in D_{m,k-2}}\quad} \sum_{b_{2i}=0}^{m-1}(\phi(b_1,\ldots,b_{2i-2},\underline{b_{2i}},b_{2i+1}, \ldots) \\
&\qquad\qquad\qquad\qquad\qquad \qquad\qquad\qquad -  \phi(b_1,\ldots,b_{2i-2},\underline{b_{2i}+1},b_{2i+1}, \ldots)),
\end{align*}
which vanishes as announced.
\end{proof}

All the steps from our proof scheme are now completed. Put together, they give

\begin{theorem}\label{T:Cyclic}
Take $r,k \ge 0$, $m \ge 1$. Write~$k$ as~$2s$ or~$2s+1$.
\begin{enumerate}
\item The $k$-coboundaries of~$C_{r,m}$ form a free $\ZZ$-module~$B^k(C_{r,m})$ of rang~$P_k(r+m)$, with a basis given by the maps 
\begin{align*}
\phi^{(\oa)}(b_1,\ldots,b_k) &= \delta_{m-1}(b_2)\delta_{m-1}(b_4)\cdots\delta_{m-1}(b_{2t}) d^{k-1-2t} \delta_{\oa}(b_{2t+1},\ldots,b_k),
\end{align*}
where $0 \le t \le \tfrac{k}{2} -1$, and $\oa \in C_{r,m}^{\times(k-2t-1)}$ satisfies $a_1 \neq a_2+1$ if $k-2t-1 \ge 2$, and $a_1 \neq m-1$ if $k-2t-1 = 1$.
\item For $k \ge 2$, $H^k(C_{r,m})$ is freely generated by the class of the map 
\begin{align*}
\phi^{0}(b_1,\ldots,b_k) &= \delta_{m-1}(b_2)\delta_{m-1}(b_4)\cdots\delta_{m-1}(b_{2s}).
\end{align*}
For $k \le 1$, the class of $1 \in \ZZ$ or of the constant map $b \mapsto 1$ is a free generator.
\item The $k$-cocycles of~$C_{r,m}$ form a free $\ZZ$-module~$Z^k(C_{r,m})$ of rang~$P_k(r+m)+1$, with a basis obtained by adding~$\phi^0$ to our basis for~$B^k(C_{r,m})$.
\end{enumerate}
\end{theorem}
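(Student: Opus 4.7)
The plan is to execute verbatim the three-step scheme announced at the opening of Section~\ref{S:Cyclic}, using the test set $I_{r,m,k} = I^-_{r,m,k} \sqcup \{\oi_0\}$ and the families $\{\phi^{(\oa)}\}_{\oa \in I^-_{r,m,k}}$, $\phi^{(\oi_0)}$ whose shapes are already dictated by the statement. Once Steps 1--3 are in place, the theorem falls out automatically: Step 1 forces $Z^k(C_{r,m})$ to inject into $\ZZ^{I_{r,m,k}}$ via evaluation on $I_{r,m,k}$; Step 2 exhibits a preimage basis for this embedding; and Step 3 certifies that all but one basis element are coboundaries, with the leftover $\phi^{(\oi_0)}$ (renamed $\phi^0$) generating $H^k$.

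For Step 1, the natural approach is induction on $k$. The case $k=1$ is direct from $d^1\phi(a+1, a) = \phi(a+1) - \phi(a)$. For $k \ge 2$, I would evaluate the cocycle identity $d^k\phi(a+1, a, \oc) = 0$ for arbitrary $a \neq m-1$ and $\oc \in C_{r,m}^{\times(k-1)}$: because every tuple starting with $(a+1, a)$ lies in $I_{r,m,k}$, all terms involving partial evaluations $\phi(a+1, a, \ldots)$ vanish, leaving $\phi(a+1, \oc) = \phi(a, \oc)$. Iterating shows $\phi(b, \oc)$ is independent of $b$, so $\phi(b, \oc) = \phi(0, \oc)$. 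For the next coordinate, evaluating at $(a, b, \oc)$ with $b \neq m-1$ and using the same trick reduces $\phi$ on these arguments to zero, while the tuples of the form $(a, m-1, \oc)$ produce a reduced cochain $\ophi(\oc) = \phi(0, m-1, \oc)$ on $C_{r,m}^{\times(k-2)}$. A short computation shows $\ophi$ is itself a cocycle vanishing on $I_{r,m,k-2}$, so induction finishes the job.

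For Step 2, the cocycle character of $\phi^{(\oi_0)}$ comes from regrouping the $2i$-th and $(2i-1)$-th summands of $d^k\phi^{(\oi_0)}$: since $\phi^{(\oi_0)}$ depends only on the even coordinates being $m-1$, the two terms of each regrouped pair agree and cancel. For $\oa \in I^-_{r,m,k}$ written as $((0,m-1)^t, a_{2t+2}+1, a_{2t+2}, \oc)$, the explicit primitive is $\psi = \sum_{u_1,\ldots,u_t} \delta_{u_1, m-1, \ldots, u_t, m-1, \ooa}$; the key computation is that $\psi$ also ignores the odd coordinates $1, 3, \ldots, 2t-1$, so $d^{k-1}\psi$ telescopes to the announced formula. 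For the invertibility of the matrix $M = (\phi^{(\oi)}(\oj))_{\oi,\oj \in I_{r,m,k}}$, I would order $I_{r,m,k}$ by the prefix length $t$: entries with $\oi$ of prefix type $t_1 > t_2 = $ prefix type of $\oj$ vanish because the first differing even coordinate of $\oj$ is not $m-1$ while $\phi^{(\oi)}$ carries a $\delta_{m-1}$ factor there, so $M$ is block upper-triangular. Each diagonal block decouples further by the tail $\oc$ into copies of the bidiagonal matrix $(\delta_i(j+1)-\delta_i(j))_{i,j \in C_{r,m}, j \neq m-1}$ with one line deleted, which is unitriangular up to sign, hence invertible.

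For Step 3, combining Steps 1 and 2 already identifies $Z^k(C_{r,m})$ as the free $\ZZ$-module on $\{\phi^{(\oa)}\}_{\oa \in I^-_{r,m,k}} \cup \{\phi^{(\oi_0)}\}$, and all but the last generator are coboundaries by the formula from Step 2. What remains is to show $\phi^{(\oi_0)}$ is not cohomologous to an integer combination of the $\phi^{(\oa)}$ with $\oa \in I^-$; equivalently, that $\phi^{(\oi_0)}$ itself is not a coboundary. I would construct the ``detector'' linear functional $L(\phi) = \sum_{\ob \in D_{m,k}} \phi(\ob)$ on the test set $D_{m,k}$ of tuples with $b_{2i-1} = b_{2i}+1$ for all relevant $i$ (and $b_{2s+1} = 0$ if $k$ is odd). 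Direct computation gives $L(\phi^{(\oi_0)}) = 1$. To prove $L$ vanishes on every coboundary $d^{k-1}\psi$, I would regroup the summands of $d^{k-1}\psi(\ob)$ into pairs as in Step 2: the constraint $b_{2i-1} = b_{2i}+1$ kills one term of each pair automatically, and the remaining terms summed over $b_{2i} \in \{0, 1, \ldots, m-1\}$ telescope (a sum of $\phi(\ldots, b_{2i}, \ldots) - \phi(\ldots, b_{2i}+1, \ldots)$ over a full cycle). Thus $L$ separates $\phi^{(\oi_0)}$ from $B^k(C_{r,m})$, which is the needed nontriviality statement.

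The main obstacle I expect is the matrix invertibility in Step 2: it is easy to write down the blocks, but one must check very carefully that the leading ``$\delta_{m-1}$ on the $2t$-th coordinate'' factor in $\phi^{(\oi)}$ really does annihilate the cross terms when $t$ values disagree, and that the constraint $a_1 \neq a_2 + 1$ built into the indexing of $I^-_{r,m,k}$ is precisely what removes the line from the bidiagonal matrix to keep each sub-block square and invertible rather than merely injective. This bookkeeping between the combinatorics of $I^-_{r,m,k}$ and the rank of the diagonal blocks is where a mismatched indexing convention would cause the entire argument to collapse.
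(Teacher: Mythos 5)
Your proposal is correct and follows the paper's proof essentially verbatim: the same test set $I_{r,m,k}$ and evaluation argument for Step~1, the same explicit primitive $\psi$ and block upper-triangular matrix with bidiagonal sub-blocks (one line deleted) for Step~2, and the same detector functional $L(\phi)=\sum_{\ob\in D_{m,k}}\phi(\ob)$ with the pairing/telescoping cancellation for Step~3. No divergence from the paper's route and no gaps.
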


In Section~\ref{S:Proj}, the rational cohomology groups of cyclic shelves $H^k(C_{r,m},\QQ)$ were shown to be freely generated by the class of the constant map $\phi_{const}\colon\ob \mapsto 1$. We now explore its analogue $[\phi_{const}]$ in $H^k(C_{r,m},\ZZ)$.

\begin{proposition}
Take an $m \ge 0$ and a $k \ge 0$ written as~$2s$ or $2s+1$. In $H^k(C_{0,m},\ZZ)$, one has $[\phi_{const}] = m^s [\phi^{0}]$.
\end{proposition}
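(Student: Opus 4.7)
The plan is to leverage the linear functional
$\Sigma\colon C^k(C_{0,m},\ZZ) \to \ZZ,\ \phi \mapsto \sum_{\ob \in D_{m,k}}\phi(\ob)$
introduced in the proof of the preceding lemma, where
$D_{m,k} = \{\ob \in \{0,\ldots,m-1\}^k \mid b_1=b_2+1,\,\ldots,\, b_{2s-1}=b_{2s}+1,\ \text{and}\ b_{2s+1}=0\ \text{if}\ k=2s+1\}$.
As part of that proof it was established that $\Sigma$ vanishes on every coboundary $d^{k-1}\psi$; the argument relied only on the relations $b_{2i-1}=b_{2i}+1$ defining~$D_{m,k}$ and applies verbatim with $r=0$.

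By Theorem~\ref{T:Cyclic}, the group $H^k(C_{0,m},\ZZ)$ is infinite cyclic with generator $[\phi^0]$, so there is a unique $c \in \ZZ$ such that $[\phi_{const}] = c\,[\phi^0]$, i.e.\ $\phi_{const} - c\,\phi^0 \in B^k(C_{0,m})$. Applying $\Sigma$ to this coboundary yields
$$\Sigma(\phi_{const}) = c\,\Sigma(\phi^0),$$
so the proposition reduces to computing the two quantities on the right.

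Write $k = 2s$ or $k = 2s+1$. The set $D_{m,k}$ is parametrized freely by the choices of $b_2,b_4,\ldots,b_{2s} \in \{0,\ldots,m-1\}$ (the odd-indexed coordinates being determined by $b_{2i-1}=b_{2i}+1$, together with $b_{2s+1}=0$ when $k=2s+1$), so $|D_{m,k}| = m^s$ and consequently $\Sigma(\phi_{const}) = m^s$. On the other hand, $\phi^0 = \delta_{m-1}(b_2)\delta_{m-1}(b_4)\cdots\delta_{m-1}(b_{2s})$ is non-zero on $\ob \in D_{m,k}$ only when every $b_{2i}=m-1$, which in turn forces $b_{2i-1}=0$; this leaves the single tuple $\oi_0 = (0,m-1,0,m-1,\ldots) \in D_{m,k}$, giving $\Sigma(\phi^0) = 1$. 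Combining the two evaluations yields $c = m^s$, which is the desired identity.

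The only step requiring care, and it is not really an obstacle, is confirming that the annihilation-on-coboundaries computation reproduced in the proof of the previous lemma carries over to $C_{0,m}$: it does, since the cancellation there exploited only the equality $b_{2i-1}=b_{2i}+1$ valid in every $D_{m,k}$, independently of~$r$. Everything else is a direct count.
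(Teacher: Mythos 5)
Your argument is correct, and it follows the same overall template as the paper's proof — find a linear functional on $C^k(C_{0,m},\ZZ)$ that vanishes on coboundaries, evaluate it on $\phi_{const}$ and on $\phi^0$, and divide — but with a different choice of functional. You reuse $\Sigma(\phi) = \sum_{\ob \in D_{m,k}}\phi(\ob)$ from the proof of the preceding lemma, where its vanishing on $B^k(C_{r,m})$ was already established for every $r$ (hence for $r=0$ in particular), and you compute $\Sigma(\phi_{const}) = |D_{m,k}| = m^s$ and $\Sigma(\phi^0)=1$. The paper instead sums over the whole cube $C_{0,m}^{\times k}$: for $r=0$ the shift $b \mapsto b+1 \bmod m$ is a bijection of $C_{0,m}$, so each pair of terms appearing in $d^{k-1}\psi$ has total sum zero and the functional kills coboundaries by a one-line change of variables; the two evaluations are then $m^k$ and $m^{k-s}$. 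Both routes yield $m^s$. Your choice saves re-deriving the annihilation property, at the cost of invoking the more intricate $D_{m,k}$ machinery; the paper's argument is slightly longer but self-contained and does not depend on the earlier lemma's internals.
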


\begin{proof}
According to Theorem~\ref{T:Cyclic}, the cocycle~$\phi_{const}$ can be presented as $\alpha\phi^{0} + d^{k-1}\psi$ for some $\alpha \in \ZZ$ and $\psi \in C^{k-1}(C_{0,m},\ZZ)$. To determine~$\alpha$, we sum the evaluations of both sides on all $\ob \in C_{0,m}^{\times k}$:  
\begin{align*}
\sum_{\ob \in C_{0,m}^{\times k}} \phi_{const}(\ob) = & \, m^k, \qquad\qquad \sum_{\ob \in C_{0,m}^{\times k}} \phi^{0}(\ob) = m^{k-s},\\
\sum_{\ob \in C_{0,m}^{\times k}} d^{k-1}\psi(\ob) = &\sum_{i=1}^{k} (-1)^{i-1} ( \sum_{\ob \in C_{0,m}^{\times k}} \psi(b_1,\ldots, b_{i-1},b_{i+1}+1, \ldots, b_{k}+1)\\
&\qquad\qquad - \sum_{\ob \in C_{0,m}^{\times k}} \psi(b_1,\ldots, b_{i-1}, b_{i+1}, \ldots, b_{k})) \\
= &\sum_{i=1}^{k} (-1)^{i-1} ( m\sum_{\ob \in C_{0,m}^{\times (k-1)}} \psi(\ob) - m\sum_{\ob \in C_{0,m}^{\times (k-1)}} \psi(\ob))= 0.
\end{align*}
Thus $\alpha = m^s$.
\end{proof}

According to Theorem~\ref{T:Cyclic}, $[\phi^{0}]$ freely generates our integral cohomology group, so, for $m,k \ge 2$, $[\phi_{const}]$ is no longer a generator; it can even vanish in positive characteristic (for instance, for $A = \ZZ_p$). This also shows the existence of a semi-strong projector to be essential for Proposition~\ref{P:CohomDecompose} to hold true. Indeed, $H^2_{inv}(C_{0,m},\ZZ)$ is generated by $[\phi_{const}]$ (Remark~\ref{R:SmallGroups}), and the assertions of Proposition~\ref{P:CohomDecompose} would force it to coincide with the generator $[\phi^{0}]$ of $H^2(C_{0,m},\ZZ)$, which we have just shown to be false.

We will now deduce from Theorem~\ref{T:Cyclic} a description of the cohomology of an arbitrary finite monogenic shelf $(S,\op)$ with any coefficients. According to Dr\'{a}pal \cite{DraAlg,DraGro}, iterated reductions of $(S,\op)$ yield a shelf of type $E_{n,\rho,\mu}$ (Example~\ref{Ex:E}), which admits a cyclic rack~$C_{0,m}$ as a retract (Example~\ref{Ex:ERetract}). One obtains a shelf morphism $pr_S \colon S \twoheadrightarrow C_{0,m}$, which induces an isomorphism in cohomology (Corollary~\ref{C:ReducHom} and Theorem~\ref{T:RetractHom}). Corollary~\ref{C:ReducHom} also identifies the cyclic racks as the only possible rack types for finite monogenic shelves. The cohomology of~$C_{0,m}$ being completely described in Theorem~\ref{T:Cyclic}, one obtains

\begin{theorem}\label{T:FinMonogHom}
For any finite monogenic shelf $(S,\op)$, the $k$th rack cohomology of~$S$ with coefficients in any abelian group~$A$ is described for any $k \ge 0$ by
$$H^k(S,A) \cong A.$$
\end{theorem}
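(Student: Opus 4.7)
The plan is to assemble the three structural results already established in the paper: Dr\'apal's classification, reduction, and retraction, and then feed the output into Theorem~\ref{T:Cyclic}. First, I would invoke Dr\'apal's theorem \cite{DraAlg,DraGro}, which says that every finite monogenic shelf $(S,\op)$ becomes, after finitely many iterations of the reduction $r\colon S\twoheadrightarrow \overline{S}$, a shelf of the form $E_{n,\rho,\mu}$ described in Example~\ref{Ex:E}. Applying Corollary~\ref{C:ReducHom} (Part~1) at each step, each reduction induces an isomorphism in cohomology, so $H^k(S,A)\cong H^k(E_{n,\rho,\mu},A)$.

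Next I would use Example~\ref{Ex:ERetract} to identify a cyclic rack retract: the sub-shelf
\[
S'=\{(2^n,\rho(2^n)),(2^n,\rho(2^n)+1),\ldots,(2^n,\rho(2^n)+\mu(2^n)-1)\}\cong C_{0,\mu(2^n)}
\]
is a retract of $E_{n,\rho,\mu}$ with retraction $\tau_{(2^n-1,0)}^{\mu(2^n)s}$ for a large enough $s$. Theorem~\ref{T:RetractHom} (Part~1) then provides an isomorphism $H^k(E_{n,\rho,\mu},A)\cong H^k(C_{0,m},A)$, where $m=\mu(2^n)$. Combining these two steps yields $H^k(S,A)\cong H^k(C_{0,m},A)$ for the appropriate $m$.

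The final step is to apply Theorem~\ref{T:Cyclic} to conclude that $H^k(C_{0,m},A)\cong A$. Theorem~\ref{T:Cyclic} is stated over $\ZZ$, but the remark at the beginning of Section~\ref{S:Cyclic} explains that the proof carries over to arbitrary abelian coefficients $A$ by replacing each integer-valued cocycle $\phi$ with the family of $A$-valued maps $\oa\mapsto\phi(\oa)\alpha$ indexed by $\alpha\in A$ (alternatively, the universal coefficient theorem applies, since the cocycle and coboundary groups are free $\ZZ$-modules, so there is no Tor contribution). In particular, the class of the cocycle $\phi^{0}$ freely generates $H^k(C_{0,m},A)$ as an $A$-module, giving $H^k(C_{0,m},A)\cong A$.

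The only subtle point is making sure the chain of isomorphisms is genuine: one must check that Corollary~\ref{C:ReducHom} can be iterated (which is immediate, since reduction is idempotent in a weak sense and each step is a shelf morphism inducing a cohomology isomorphism), and that Dr\'apal's classification really lands in the family $E_{n,\rho,\mu}$ for a \emph{monogenic} input (it does, with the generator of $S$ mapping to $(1,-1)$ under iterated reduction). Everything else is bookkeeping, and no new computation beyond the cyclic rack case is needed.
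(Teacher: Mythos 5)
Your proposal is correct and follows exactly the paper's own argument: iterated reduction (via Corollary~\ref{C:ReducHom}) to a shelf of type $E_{n,\rho,\mu}$, then retraction (via Example~\ref{Ex:ERetract} and Theorem~\ref{T:RetractHom}) to a cyclic rack $C_{0,m}$, followed by the explicit description in Theorem~\ref{T:Cyclic} extended to arbitrary coefficients by the remark at the start of Section~\ref{S:Cyclic}. The bookkeeping checks you flag at the end are exactly the points the paper implicitly relies on, so there is nothing to add.
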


For $A=\QQ$, the cohomology $\QQ$-modules in every degree are freely generated by the classes of the $1$-valued constant maps. 
For a general~$A$, $H^k(S,A)$ can be presented as $\{\, [\phi^{m,k}_\alpha \circ pr_S^{\times k}] \,|\, \alpha \in A \,\}$, where\footnote{For simplicity, we replaced the conditions $b_{2i} = m-1$ from Theorem~\ref{T:Cyclic} by simpler ones $b_{2i} = 0$, which is possible by symmetry considerations.}
$$\phi^{m,k}_\alpha \colon C_{0,m}^{\times k} \to A, \qquad \ob \mapsto \begin{cases} \alpha &\text{ if } b_2 = b_4 = b_6 = \ldots = 0, \\ 0 &\text{ otherwise.} \end{cases}$$

\medskip%
\section{Cohomological aspects of Laver tables}\label{S:Laver}

We now turn to a study of the $k$th cohomology of the Laver table~$A_n$. Here we only indicate how the steps of our general proof scheme are realized in this case, without giving the technical details.

Suppose $k>0$ and $n>0$ (this excludes some cases where Theorem~A is trivial). Our proof heavily uses the properties \eqref{E:LT1}-\eqref{E:LT3} of the elements~$2^n$ and~$N=2^n-1$ of~$A_n$. To exploit the particularities of these elements, put
$$\ob^{(k,r)}=(\underbrace{N,\ldots,N}_{r},\underbrace{2^n,\ldots,2^n}_{k-r}).$$
The evaluation of coboundaries on such $k$-tuples is particularly easy:

\begin{lemma}\label{L:br}
Take a map $\psi\colon A_n^{\times (k-1)} \rightarrow \ZZ$, and an integer~$r$, $0 \le r \le k$, written as~$2t$ or~$2t+1$. One has
\begin{align*}
(d^{k-1}\psi)(\ob^{(k,r)}) &=  \psi(\ob^{(k-1,0)})-\psi(\ob^{(k-1,1)})+\psi(\ob^{(k-1,2)})- \cdots - \psi(\ob^{(k-1,2t-1)}).
\end{align*}
\end{lemma}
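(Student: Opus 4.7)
My plan is to expand $(d^{k-1}\psi)(\ob^{(k,r)})$ using the definition~\eqref{E:RackCohom} and then simplify each term using the special properties~\eqref{E:LT1}--\eqref{E:LT3} of the elements $2^n$ and $N=2^n-1$. Applied to the $k$-tuple $\ob^{(k,r)}=(a_1,\ldots,a_k)$ with $a_i=N$ for $i\le r$ and $a_i=2^n$ for $i>r$, the differential is
\begin{align*}
(d^{k-1}\psi)(\ob^{(k,r)}) = \sum_{i=1}^{k-1}(-1)^{i-1}\bigl(&\psi(a_1,\ldots,a_{i-1},a_i\op a_{i+1},\ldots,a_i\op a_k)\\
&-\psi(a_1,\ldots,a_{i-1},a_{i+1},\ldots,a_k)\bigr).
\end{align*}
For each $i$ and each $j>i$ the value $a_i\op a_j$ is determined by \eqref{E:LT1}--\eqref{E:LT3}: namely $a_i\op a_j=2^n$ unless $a_i=2^n$ and $a_j=N$, in which case $a_i\op a_j=N$. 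The hard part (though really just a bookkeeping step) will be to cleanly identify the resulting tuples with the $\ob^{(k-1,s)}$.

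I would split the sum into the two regimes $i\le r$ and $i>r$. For $i>r$ we have $a_i=2^n$, so all subsequent $a_j$ are also $2^n$ and consequently $a_i\op a_j=a_j=2^n$; the first argument of $\psi$ then equals the second argument, both being $\ob^{(k-1,r)}$, and these $i>r$ terms contribute zero. For $i\le r$ we have $a_i=N$, hence $a_i\op a_j=2^n$ for every $j>i$, so the first $\psi$-argument becomes $(\underbrace{N,\ldots,N}_{i-1},\underbrace{2^n,\ldots,2^n}_{k-i})=\ob^{(k-1,i-1)}$, while the second $\psi$-argument is simply $\ob^{(k-1,r-1)}$ (one $N$ removed).

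This reduces the formula to
\begin{align*}
(d^{k-1}\psi)(\ob^{(k,r)}) = \sum_{i=1}^{r}(-1)^{i-1}\psi(\ob^{(k-1,i-1)}) \;-\; \psi(\ob^{(k-1,r-1)})\sum_{i=1}^{r}(-1)^{i-1}.
\end{align*}
It remains to observe that $\sum_{i=1}^{r}(-1)^{i-1}$ equals $0$ when $r=2t$ and $1$ when $r=2t+1$. In the even case $r=2t$ the correction term disappears and the alternating sum runs from $\psi(\ob^{(k-1,0)})$ through $-\psi(\ob^{(k-1,2t-1)})$, exactly as claimed. In the odd case $r=2t+1$ the alternating sum in the first term reaches $+\psi(\ob^{(k-1,2t)})$, but this is cancelled by the correction $-\psi(\ob^{(k-1,2t)})\cdot 1$, leaving precisely the stated truncated alternating sum ending with $-\psi(\ob^{(k-1,2t-1)})$. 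This yields the lemma in both parities.
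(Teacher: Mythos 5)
Your computation is correct, and it is the direct verification the paper leaves to the reader (Section~\ref{S:Laver} states the lemmas without proofs). The split into two regimes is the right mechanism: for $i>r$, every subsequent entry is $2^n$, so~\eqref{E:LT1}--\eqref{E:LT2} give $a_i\op a_j = 2^n = a_j$, the two $\psi$-arguments coincide, and the $i$th term vanishes; for $i\le r$, property~\eqref{E:LT3} sends every later entry to $2^n$, producing $\ob^{(k-1,i-1)}$ for the first argument and $\ob^{(k-1,r-1)}$ for the second. The final parity discussion correctly turns the alternating-sum-with-correction into the truncated alternating sum stated in the lemma.

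One small point you should make explicit: the differential sum in~\eqref{E:RackCohom} runs only from $i=1$ to $i=k-1$, whereas the lemma allows $0\le r\le k$. When $r=k$, your formula $\sum_{i=1}^{r}(-1)^{i-1}\psi(\ob^{(k-1,i-1)}) - \psi(\ob^{(k-1,r-1)})\sum_{i=1}^{r}(-1)^{i-1}$ nominally includes an $i=k$ term that the differential does not produce. This happens to be harmless: at $i=k=r$, the two summands are both $(-1)^{k-1}\psi(\ob^{(k-1,k-1)})$ and cancel in the difference, so your expression equals the one truncated at $i=k-1$. It is worth a one-line remark, since the $r=k$ case is precisely the tuple $\oi_0$ that plays a distinguished role in the rest of the argument.
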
 

Consider now the sets
\begin{align*}
I^-_{n,k} &= \big\{\, \oa \in A_n^{\times k} \,\big|\, a_1 = \ldots = a_{2t+1}=N, a_{2t+2} \neq N \text{ for some } 0 \le t \le \tfrac{k}{2} -1\,\big\},\\ 
I_{n,k} &= I^-_{n,k} \coprod \big\{\oi_0 = \ob^{(k,k)}  \big\}.
\end{align*}

\begin{lemma}
One has $|I^-_{n,k}| = P_k(2^n)$.
\end{lemma}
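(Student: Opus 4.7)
The plan is to count $|I^-_{n,k}|$ directly by partitioning according to the parameter $t$ in the definition, and then recognize the resulting sum as the closed-form expression $P_k(2^n)$.

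First I would observe that the pieces corresponding to different values of $t$ are automatically disjoint: indeed, in each piece, $2t+2$ is uniquely determined as the smallest even index~$i$ at which $a_i \ne N$ (or equivalently as one more than the largest odd index~$i$ for which $a_1 = \dots = a_i = N$). So
\begin{equation*}
|I^-_{n,k}| = \sum_{t=0}^{\lfloor k/2 \rfloor - 1} N_t,
\end{equation*}
where $N_t$ is the number of $k$-tuples $\oa$ with $a_1=\dots=a_{2t+1}=N$, $a_{2t+2}\ne N$, and $a_{2t+3},\dots,a_k$ arbitrary in $A_n$. Counting by coordinates: positions $1,\dots,2t+1$ are forced (one choice), position $2t+2$ has $|A_n|-1 = 2^n - 1$ choices, and the remaining $k-2t-2$ positions each have $2^n$ choices, giving $N_t = (2^n-1)\,(2^n)^{k-2t-2}$.

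Next I would evaluate the resulting geometric sum. Writing $x = 2^n$ and $s = \lfloor k/2 \rfloor$, one has
\begin{equation*}
|I^-_{n,k}| = (x-1)\sum_{t=0}^{s-1} x^{k-2t-2} = (x-1)\,x^{k-2s}\cdot\frac{x^{2s}-1}{x^2-1} = \frac{x^{k} - x^{k-2s}}{x+1}.
\end{equation*}
Since $k-2s$ equals $0$ when $k$ is even and $1$ when $k$ is odd, one has $k-2s = k \bmod 2$, so the right-hand side is exactly $P_k(x) = P_k(2^n)$ as given in~\eqref{E:Pk}.

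This is entirely a bookkeeping lemma; I do not expect any genuine obstacle. The only thing worth double-checking is the edge case $k=1$, where $\lfloor k/2 \rfloor - 1 = -1$ makes the index set empty, so $|I^-_{n,1}|=0$, matching $P_1(x) = (x-x)/(x+1)=0$; and similarly the index set is non-empty exactly when $k\ge 2$, which is consistent with $P_k$ being non-zero precisely for $k \ge 2$.
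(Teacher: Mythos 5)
Your proof is correct and is exactly the straightforward counting the paper has in mind (the paper omits the proof, treating it as a routine size computation): partition by the uniquely determined index $t$, count $(2^n-1)(2^n)^{k-2t-2}$ tuples in each piece, and sum the geometric series to obtain $\frac{x^k - x^{k\bmod 2}}{x+1}$ with $x=2^n$.
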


We now attack Step 1 from our general proof scheme.
\begin{lemma}
A cocycle $\phi \in Z^k(A_n)$ vanishing on all the $\oa \in I_{n,k}$ is necessarily the zero cocycle.
\end{lemma}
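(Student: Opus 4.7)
The plan is to imitate the inductive reduction of Lemma~\ref{L:CyclicA}, but with the pair $(N,2^n)$ of $A_n$ replacing $(0,m-1)$ in the cyclic case. The driving identities are the centrality of $2^n$ (so $2^n \op a = a$ and $a \op 2^n = 2^n$) together with the projection property $N \op a = 2^n$. The induction runs on $k$ with base cases $k=1,2$ treated directly: for $k=1$, the cocycle identity with first argument $N$ gives $\phi(2^n)=\phi(b)$ for every $b$, so $\phi$ is constant, and vanishing on $\oi_0=(N)$ finishes the job; $k=2$ is handled the same way by first taking $a=N$ in the cocycle identity.

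For $k \ge 3$, I would first prove that $\phi(\ob^{(k,r)}) = 0$ for every $0 \le r \le k$. Applying Lemma~\ref{L:br} to the cocycle identity $(d^k\phi)(\ob^{(k+1,r)})=0$ as $r$ varies produces the paired equalities $\phi(\ob^{(k,2j)}) = \phi(\ob^{(k,2j+1)})$. Since $\ob^{(k,r)} \in I^-_{n,k}$ whenever $r$ is odd and $r<k$, the odd-indexed values are zero by hypothesis; combined with $\phi(\oi_0)=\phi(\ob^{(k,k)})=0$, the pairing propagates vanishing to every $r$. In particular $\phi(\ob^{(k,0)})=\phi(\ob^{(k,1)})=0$.

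Next I would pass to a cochain in dimension $k-2$. Set $\phi_2(\od)=\phi(N,N,\od)$ for $\od \in A_n^{\times(k-2)}$ and expand $(d^k\phi)(N,N,d_1,\ldots,d_{k-1})=0$. Using $N \op N = 2^n$, $N \op d = 2^n$ and $2^n \op d = d$, the first two summands telescope into $\phi(\ob^{(k,0)}) - \phi(\ob^{(k,1)}) = 0$, while the remaining terms reassemble into $d^{k-2}\phi_2(\od)$. Hence $\phi_2 \in Z^{k-2}(A_n)$. A straightforward check shows that any $\oc \in I_{n,k-2}$ lifts to $(N,N,\oc) \in I_{n,k}$ --- an initial $N$-block of odd length $2t'+1$ in $\oc$ becomes one of odd length $2t'+3$, and all-$N$ tuples lift to all-$N$ tuples --- so $\phi_2$ vanishes on $I_{n,k-2}$. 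By the induction hypothesis $\phi_2 \equiv 0$.

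Finally, set $\phi_1(\oc)=\phi(N,\oc)$ for $\oc \in A_n^{\times(k-1)}$. A similar expansion of $(d^k\phi)(N,c_1,\ldots,c_k)=0$ yields $\phi = \phi(\ob^{(k,0)}) - d^{k-1}\phi_1 = -d^{k-1}\phi_1$. But $\phi_1$ vanishes identically: for $c_1=N$ we have $\phi_1(\oc)=\phi_2(c_2,\ldots,c_{k-1})=0$ by the previous step, while for $c_1 \ne N$ the tuple $(N,\oc)$ lies in $I^-_{n,k}$ with $t=0$, so $\phi_1(\oc)=0$ by hypothesis. Consequently $\phi \equiv 0$. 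The main delicacy I foresee is the first step: one must chain the paired equalities carefully so that every $\phi(\ob^{(k,r)})$ ends up anchored to a known zero --- at the top via $\oi_0$, and at odd indices via $I^-_{n,k}$. Everything else is routine bookkeeping with the absorption identities for $N$ and $2^n$.
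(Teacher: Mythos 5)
Your proposal is correct, and it is the natural Laver-table analogue of the paper's proof of Lemma~\ref{L:CyclicA} for cyclic shelves; since the paper explicitly omits the technical details here, your argument supplies the missing proof in the intended spirit. The inductive structure (base cases $k=1,2$; passage from $k$ to $k-2$ via $\phi_2(\od)=\phi(N,N,\od)$), the application of Lemma~\ref{L:br} to the $\ob^{(k,r)}$, the identification of $\phi_2$ as a $(k-2)$-cocycle vanishing on $I_{n,k-2}$ because $(N,N,\oc)\in I_{n,k}$ whenever $\oc\in I_{n,k-2}$, and the closing formula $\phi=-d^{k-1}\phi_1$ all check out. One remark: Step~1 need not chain anything. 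The single instance $r=2$ of Lemma~\ref{L:br} already yields $\phi(\ob^{(k,0)})=\phi(\ob^{(k,1)})$, and for $k\ge 2$ the tuple $\ob^{(k,1)}=(N,2^n,\ldots,2^n)$ lies in $I^-_{n,k}$ outright (initial $N$-block of length one, i.e.\ the case $t=0$); those two vanishings are the only ones Steps~2 and~3 actually use, so the full sweep over $0\le r\le k$ is dispensable.
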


We then construct $|I_{n,k}|$ cocycles necessary for Step 2. Any~$\oa \in I^-_{n,k}$ starts with~$N$; remove this first element~$N$, and denote by~$\ooa$ the $(k-1)$-tuple obtained. Put 
\begin{align*}
\phi^{(\oa)} &= d^{k-1} \delta_{\ooa} \in B^k(S) \subseteq Z^k(S).
\end{align*}
Further, let~$\phi^{(\oi_0)}$ be the constant map $\oc \mapsto 1$, which is obviously a cocycle.

\begin{lemma}
Matrix $M=(\phi^{(\oi)}(\oj))_{\oi,\oj \in I_{n,k}}$ is invertible.
\end{lemma}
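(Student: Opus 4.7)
My plan is to prove the invertibility of $M$ by exhibiting a block-triangular structure under a suitable ordering of $I_{n,k}$.

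The key input is an explicit determination of $\phi^{(\oi)}(\oj) = d^{k-1}\delta_{\ooi}(\oj)$ for $\oi \in I^-_{n,k}$ with parameter $t$. Expanding the differential via~\eqref{E:RackCohom} and using the identities~\eqref{E:LT1}--\eqref{E:LT3}, I claim that whenever $\oj \in I_{n,k}$ has parameter $t' \ge t$ or $\oj = \oi_0$, only the $s = 2t+1$ term of the expansion contributes non-trivially. Indeed, $\ooi$ has exactly $2t$ leading $N$'s and then $\ooi_{2t+1} = i_{2t+2} \neq N$; for $s \le 2t$ the partial evaluation obtained by left-translating by $j_s = N$ takes the shape $(N^{s-1}, 2^n, \ldots, 2^n)$ by iterated use of~\eqref{E:LT3} and carries too few leading $N$'s to match $\ooi$; for $s > 2t+1$, both modified tuples retain a leading block of at least $2t+1$ $N$'s, incompatible with $\ooi_{2t+1} \neq N$. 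Setting $\oi^{*,t} := (N^{2t+1}, 2^n, \ldots, 2^n)$, the surviving contribution collapses to
\[
\phi^{(\oi)}(\oj) = \delta_{\oi = \oi^{*,t}} - \delta_{\oi = \oj} \qquad (\text{when } t' \ge t \text{ or } \oj = \oi_0),
\]
together with the trivial $\phi^{(\oi_0)} \equiv 1$.

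I then order $I_{n,k}$ by listing first the \emph{non-special} elements (those in $I^-_{n,k}$ distinct from each $\oi^{*,t}$), grouped by decreasing parameter $t$, followed by the special elements $\oi^{*,T}, \oi^{*,T-1}, \ldots, \oi^{*,0}, \oi_0$, where $T = \lfloor k/2 \rfloor - 1$. Decomposing $M = \begin{pmatrix} A & B \\ C & D \end{pmatrix}$ with $A$ the non-special block, the formula above shows: $A$ is upper triangular with $-1$ on the diagonal and zeros strictly below, hence invertible with $\det A = (-1)^{|A|}$; $B$ exhibits a staircase vanishing pattern (its $\oi_0$ column and $\oi^{*,T}$ column are both identically zero, and its $\oi^{*,t}$ column for $t < T$ vanishes in all rows of parameter $\le t$); and $D$ is an explicit $(T+2) \times (T+2)$ bordered lower-triangular matrix with $\det D = (-1)^{T+1}$.

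The main technical obstacle is the above-diagonal region of $A$, i.e., the entries $\phi^{(\oi)}(\oj)$ with $t_{\oi} > t_{\oj}$ (non-special $\oi$ with strictly more leading $N$'s than $\oj$): these are not automatically zero, as a small direct calculation in $A_2$ with $k = 4$ already exhibits, but lying above the diagonal of the upper-triangular matrix $A$ they do not affect $\det A$. To conclude, I apply the Schur complement identity $\det M = \det A \cdot \det(D - CA^{-1}B)$: the vanishing of the $\oi_0$ and $\oi^{*,T}$ columns of $B$ guarantees that those columns of $D - CA^{-1}B$ still coincide with the corresponding columns of $D$; a column operation subtracting the $\oi^{*,T}$ column from the $\oi_0$ column, followed by cofactor expansion along the new $\oi_0$ column (which has a single nonzero entry), reduces the determinant to a smaller bordered-triangular problem of the same type. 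Iterating this procedure yields $\det(D - CA^{-1}B) = \pm 1$, and consequently $\det M = \pm 1$, so $M$ is invertible.
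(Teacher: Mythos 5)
The paper states this lemma without a written proof (Section~\ref{S:Laver} explicitly omits the technical details), so there is no argument in the paper to compare against; your proposal must be judged on its own. Your evaluation formula $\phi^{(\oi)}(\oj) = \delta_{\oi = \oi^{*,t}} - \delta_{\oi=\oj}$, valid when the parameter of $\oj$ is at least that of $\oi$ or when $\oj=\oi_0$, is correct, and your analysis of the two diagonal blocks is sound: $A$ is upper triangular with $-1$ on the diagonal (the above-diagonal entries indeed need not vanish, as your $A_2$, $k=4$ example shows, but they are irrelevant), and $D$ has the bordered strictly-lower-triangular shape you describe with $\det D = (-1)^{T+1}$.

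The reduction step, however, has a gap. You prove only a partial (``staircase'') vanishing for $B$, and the iterated Schur-complement argument you sketch does not close from that information alone: after the first cofactor expansion, the surviving minor has columns $\oi^{*,v}$ with $v<T$ whose entries in $D - CA^{-1}B$ remain undetermined (you have left open the possibility that $B$ is nonzero in rows of parameter exceeding $v$), so the resulting matrix is not verifiably ``of the same type'' and the induction cannot proceed as stated. The fix is to observe that $B$ in fact vanishes entirely: for a non-special $\oi$ of parameter $t$ and any $v$, one has $\phi^{(\oi)}(\oi^{*,v})=0$. When $v\ge t$ this is exactly your formula (both Kronecker deltas are $0$ for non-special $\oi$); when $v<t$, write $\oi^{*,v}=(N^{2v+1},2^n,\ldots,2^n)$ and expand $d^{k-1}\delta_{\ooi}$ at $\oi^{*,v}$: the summands of index $s\le 2v+1$ translate by $N$ and, in both the translated and the deleted tuple, place a $2^n$ at a position $\le 2t-1$ where $\ooi$ has $N$, so both deltas vanish; the summands of index $s\ge 2v+2$ translate by $2^n$ and cancel pairwise because $2^n\op x=x$ by~\eqref{E:LT1}. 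With $B=0$, the matrix $M$ is genuinely block lower triangular in your ordering, and $\det M = \det A\cdot\det D=\pm 1$ follows at once, with no Schur complement or iteration needed.
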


Cocycles $\phi^{(\oa)}$, $\oa \in I^-_{n,k}$, are coboundaries by construction. To finish Step 3, one needs the following

\begin{lemma}
Coboundaries $\phi^{(\oa)}$, $\oa \in I^-_{n,k}$, generate~$B^k(S)$.
\end{lemma}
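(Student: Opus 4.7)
The plan is to exploit the special absorbing behaviour of $2^n$ in~$A_n$ to exhibit a single linear functional on $C^k(A_n)$ that vanishes on every coboundary yet takes value~$1$ on the constant cocycle~$\phi^{(\oi_0)}$; then the desired generation property follows by linear algebra. The whole argument mirrors the one used for cyclic shelves, but is substantially simpler thanks to the central/projector properties \eqref{E:LT1}--\eqref{E:LT3}.

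First, I would upgrade the two preceding lemmas to the statement that $\{\phi^{(\oi)}\}_{\oi \in I_{n,k}}$ is in fact a $\ZZ$-basis of~$Z^k(A_n)$. Indeed, given any cocycle~$\phi$, the invertibility of~$M$ over~$\ZZ$ produces unique integers $(c_{\oi})_{\oi \in I_{n,k}}$ with $\phi(\oj) = \sum_{\oi} c_{\oi} \phi^{(\oi)}(\oj)$ for every $\oj \in I_{n,k}$; the difference $\phi - \sum c_{\oi} \phi^{(\oi)}$ is then a cocycle vanishing on all of~$I_{n,k}$, hence identically zero by the vanishing lemma of Step~1. Consequently, any coboundary $\phi \in B^k(A_n)$ admits a unique decomposition
$$\phi \;=\; c_0\,\phi^{(\oi_0)} \;+\; \sum_{\oa \in I^-_{n,k}} c_{\oa}\,\phi^{(\oa)},$$
and the only thing left to verify is $c_0 = 0$.

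Second, I would specialize Lemma~\ref{L:br} to $r = 0$. The alternating sum on its right-hand side is then empty, so $(d^{k-1}\psi)(\ob^{(k,0)}) = 0$ for every $\psi \in C^{k-1}(A_n)$. Hence every coboundary~$\phi$ vanishes at the tuple $\ob^{(k,0)} = (2^n, \ldots, 2^n)$, and in particular so does each $\phi^{(\oa)} = d^{k-1}\delta_{\ooa}$, $\oa \in I^-_{n,k}$. On the other hand, $\phi^{(\oi_0)}(\ob^{(k,0)}) = 1$ since $\phi^{(\oi_0)}$ is the constant map with value~$1$.

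Evaluating the above decomposition at $\ob^{(k,0)}$ thus forces $0 = c_0 \cdot 1 + 0$, i.e.\ $c_0 = 0$, so~$\phi$ lies in the $\ZZ$-span of $\{\phi^{(\oa)} : \oa \in I^-_{n,k}\}$, which is the claim. I expect the only non-obvious ingredient to be the choice of evaluation point that kills every coboundary in one stroke; once one notices that the $r=0$ instance of Lemma~\ref{L:br} delivers exactly this, the argument is painless — in sharp contrast with the cyclic shelf case, where the analogous role was played by the less transparent averaging functional $\sum_{\ob \in D_{m,k}} \phi(\ob)$.
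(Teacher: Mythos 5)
Your proof is correct, and it uses exactly the tool the paper sets up: Lemma~\ref{L:br} specialized to $r=0$ (equivalently, the direct observation from~\eqref{E:LT1} that every coboundary vanishes at $(2^n,\ldots,2^n)$), combined with the fact that the previous two lemmas promote $\{\phi^{(\oi)}\}_{\oi \in I_{n,k}}$ to an integral basis of $Z^k(A_n)$. The paper omits the details for this lemma, giving only Lemma~\ref{L:br} as the intended instrument, and your argument is the natural realization of that plan; your remark that this single evaluation point replaces the averaging functional $\sum_{\ob \in D_{m,k}}\phi(\ob)$ used in the cyclic case accurately captures why the Laver-table step is simpler.
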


All the steps from our proof scheme are now completed. Put together, they give

\begin{theorem}\label{T:Laver}
Take $n,k \ge 0$.
\begin{enumerate}
\item The $k$-coboundaries of~$A_n$ form a free $\ZZ$-module~$B^k(A_n)$ of rang~$P_k(2^n)$, with a basis given by the maps $d^{k-1} \delta_{\oa}$ for all~$\oa \in A_n^{\times(k-1)}$ satisfying
$a_1 = \ldots = a_{2t}=2^n-1$, $a_{2t+1} \neq 2^n-1$ for some $0 \le t \le \tfrac{k}{2} -1$.
\item $H^k(A_n)$ is freely generated by the class of the constant map $\phi^0\colon\ob \mapsto 1$.
\item The $k$-cocycles of~$A_n$ form a free $\ZZ$-module~$Z^k(A_n)$ of rang~$P_k(2^n)+1$, with a basis obtained by adding~$\phi^0$ to our basis for~$B^k(A_n)$.
\end{enumerate}
\end{theorem}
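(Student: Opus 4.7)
The plan is to follow the three-step scheme laid out at the start of Section~\ref{S:Cyclic}, with Laver-specific inputs replacing the cyclic ones. Part~$2$ is essentially free: Example~\ref{Ex:Laver} shows that $\tau_N$ (with $N = 2^n - 1$) is a strong projector for $A_n$, so Theorem~\ref{T:CohomComput} and Remark~\ref{R:mono} together give $H^k(A_n) \cong \ZZ$ freely generated by $[\phi^0]$. The substance is Parts~$1$ and~$3$, which require all three lemmas of the scheme.

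For the test-set lemma (Step~$1$), I would induct on~$k$, the case $k = 1$ being immediate since $A_n$ has a single orbit. For the inductive step, let $\phi \in Z^k(A_n)$ vanish on $I_{n,k}$. Applying $d^k\phi = 0$ at $\ob^{(k+1,2)}$ and using the $\ob^{(k,r)}$-lemma stated just above yields $\phi(\ob^{(k,0)}) = \phi(\ob^{(k,1)})$. Expanding $(d^k\phi)(N,N,\oc) = 0$ for arbitrary $\oc$, the absorbing identities~\eqref{E:LT1}--\eqref{E:LT3} collapse the $i \ge 3$ terms into $(d^{k-2}\phi_{N,N})(\oc)$, while the $i = 1, 2$ terms telescope using the equality just derived; hence $\phi_{N,N}$ is a $(k-2)$-cocycle. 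Prepending two $N$'s sends $I_{n,k-2}$ into $I_{n,k}$, so $\phi_{N,N}$ vanishes on $I_{n,k-2}$ and the inductive hypothesis yields $\phi_{N,N} \equiv 0$. Combined with the vanishing of $\phi$ on the $t = 0$ stratum of $I^-_{n,k}$, this forces $\phi_N \equiv 0$. Finally, expanding $(d^k\phi)(N,\oa) = 0$ collapses to $\phi(\oa) = \phi(\ob^{(k,0)}) - (d^{k-1}\phi_N)(\oa) = \phi(\ob^{(k,0)})$, so $\phi$ is constant; evaluating at $\oi_0$ pins the constant to zero.

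The main obstacle is Step~$2$: showing the matrix $M = (\phi^{(\oi)}(\oj))_{\oi, \oj \in I_{n,k}}$ is invertible over $\ZZ$. I would order the indices by the stratum $I^-_{n,k,t}$ of elements with exactly $2t+1$ leading $N$'s, placing $\oi_0$ first and, within each stratum, the ``special'' element $S_t = \ob^{(k, 2t+1)} \in I^-_{n,k,t}$ first. The $\oi_0$-row is identically $1$; the $\oi_0$-column, computed via Lemma~\ref{L:br}, has $1$'s at $\oi_0$ and each $S_t$, and $0$'s on non-special rows. For non-special $\oi$ at level~$t$, the absorbing $N$-behaviour kills most of the $k-1$ summands in $(d^{k-1}\delta_{\ooi})(\oj)$: the surviving contributions produce $-1$'s on the diagonal at $s = t$ together with a narrow band of potentially nonzero entries at $s < t$ arising from Laver-table products $b \op d = N$. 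Subtracting suitable multiples of the $\oi_0$- and $S_t$-rows to clear the wide contributions, one brings $M$ into block lower-triangular form with diagonal blocks $\pm I$, giving $\det M = \pm 1$. The delicate point is that Laver multiplication, unlike the cyclic shift, has no closed formula, so one must verify that all off-triangular ``coincidence'' entries are tamed by these row operations; the smallest nontrivial case $A_1$, $k = 4$ already yields $\det M = -1$, confirming the mechanism.

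Granting Steps~$1$ and~$2$, the conclusion is automatic. Step~$2$ gives $|I_{n,k}| = P_k(2^n) + 1$ linearly independent cocycles, and Step~$1$ bounds $\operatorname{rk}_{\ZZ}(Z^k(A_n)) \le |I_{n,k}|$, so $\{\phi^{(\oi)}\}_{\oi \in I_{n,k}}$ is a $\ZZ$-basis of $Z^k(A_n)$, giving Part~$3$. The short exact sequence $0 \to B^k(A_n) \to Z^k(A_n) \to H^k(A_n) \to 0$ splits via $[\phi^0]$ since $H^k \cong \ZZ[\phi^0]$ is free; projecting this basis onto $H^k$ kills each $\phi^{(\oa)}$ with $\oa \in I^-_{n,k}$ (a coboundary by construction), so the remaining $P_k(2^n)$ basis elements form a $\ZZ$-basis of $B^k(A_n)$, yielding Part~$1$.
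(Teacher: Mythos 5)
Your overall plan follows the paper's three-step scheme, and most of what you wrote is sound: your Step~1 induction is essentially what is needed (using $d^k\phi=0$ at $\ob^{(k+1,2)}$ to force $\phi(\ob^{(k,0)})=0$, then showing $\phi_{N,N}$ is a cocycle vanishing on $I_{n,k-2}$, then $\phi_N\equiv 0$, then constancy), and your Step~3 is a valid shortcut: instead of a direct argument that the coefficient of $\phi^{(\oi_0)}$ in any coboundary vanishes (as the paper does for cyclic shelves), you import $H^k(A_n,\ZZ)\cong\ZZ\cdot[\phi^0]$ from Example~\ref{Ex:Laver}/Theorem~\ref{T:CohomComput} and read off $B^k$ as the kernel of the projection in the basis supplied by Steps~1 and~2. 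This is cleaner, at the cost of depending on Section~\ref{S:Proj}; the paper's scheme keeps Step~3 self-contained so that the three lemmas re-derive $H^k\cong\ZZ$ as a by-product.

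The genuine gap is Step~2, and you flag it yourself. Showing $\det M=\pm 1$ over $\ZZ$ is the technical heart of the Laver case, and what you offer is a plausible strategy (stratify by the number of leading $N$'s, use $\oi_0$- and $S_t$-rows as pivots, hope the remaining diagonal blocks are $\pm I$) plus a single computer-checked instance ($A_1$, $k=4$). This does not constitute a proof. The entries $\phi^{(\oi)}(\oj)$ for $\oi,\oj$ at different levels do receive contributions from Laver products in the tails of $\oj$, and you would have to actually control these; moreover, some of your asserted facts about the block structure need care (e.g., a short computation shows $\phi^{(S_t)}(S_t)=0$ because $2^n\op b=b$ makes all $l>2t+1$ terms cancel, so the $S_t$-rows are not diagonal pivots in the naive sense, but must be combined with the $\oi_0$-row as you suggest). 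The $k=2$ case confirms the mechanism you describe, but the inductive or structural argument that makes it work for all $k$ is precisely what is missing. Since the paper also defers these technicalities, your proposal is at the same level of completeness as the paper's sketch, but as a \emph{proof} it is not finished: the invertibility of~$M$ remains to be established in general.
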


\bibliographystyle{alpha}
\bibliography{biblio}
\end{document}